\title[Full groups of minimal homeomorphisms and Baire category methods]{Full groups of minimal homeomorphisms and Baire category methods}
\author{Tom\'as Ibarluc\'ia}
\author{Julien Melleray}
\address{Universit\'e de Lyon \\
CNRS UMR 5208 \\
Universit\'e Lyon 1 \\
Institut Camille Jordan \\
43 blvd. du 11 novembre 1918\\
F-69622 Villeurbanne Cedex \\
France}
\numberwithin{equation}{section}
\begin{document}

\begin{abstract}
We study full groups of minimal actions of countable groups by homeomorphisms on a Cantor space $X$, showing that these groups do not admit a compatible Polish group topology and, in the case of $\Z$-actions, are coanalytic non-Borel inside $\Homeo(X)$.  We point out that the full group of a minimal homeomorphism is topologically simple. We also study some properties of the closure of the full group of a minimal homeomorphism inside $\Homeo(X)$.
\end{abstract}
\maketitle
\section{Introduction}
When studying a mathematical structure, one is often led
 to consider the properties of its automorphism group, and then it is tempting to ask to what extent the group characterizes the structure. A particularly striking example is provided by a theorem of Dye (\cite{Dye1959}, \cite{Dye1963}) in ergodic theory: assume that two countable groups $\Gamma, \Delta$ act on the unit interval $[0,1]$ by measure-preserving automorphisms, without any non-trivial invariant sets (i.e.~the actions are \emph{ergodic}), and consider the groups $[\Gamma]$ (resp. $[\Delta]$) made up of all measurable bijections that map each $\Gamma$-orbit (resp. $\Delta$-orbit) onto itself. Then the groups $[\Gamma]$ and $[\Delta]$ are isomorphic if, and only if, there exists a measure-preserving bijection of $[0,1]$ which maps $\Gamma$-orbits onto $\Delta$-orbits. One then says that the relations are orbit equivalent; $[\Gamma]$ is called the \emph{full group} of the action. Using this language, Dye's theorem says that the full group of an ergodic action of a countable group on a standard probability space completely remembers the associated equivalence relation up to orbit equivalence.

This result was the motivation for an intensive study of full groups in ergodic theory, for which we point to \cite{Kechris2010} as a general reference. More recently, it came to light, initially via the work of Giordano--Putnam--Skau, that a similar phenomenon takes place in \emph{topological dynamics}. In that context, one still considers actions of countable groups, replacing probability-measure-preserving actions with actions by homeomorphisms of a Cantor space. The two settings are related: for instance, when $\Gamma$ is a countable group, one could consider the Bernoulli shift action of $\Gamma$ on $\{0,1\}^\Gamma$ as a measure-preserving action (say, for the $(1/2,1/2)$-Bernoulli measure) or as an action by homeomorphisms. 
As in the measure-theoretic setting, one can define the full group of an action of a countable group $\Gamma$ on a Cantor space $X$: this time, it is made up of all homeomorphisms of $X$ which map $\Gamma$-orbits onto themselves. The counterpart of ergodicity here is \emph{minimality}, i.e.~the assumption that all the orbits of the action are dense; the analog of Dye's theorem for minimal group actions  was proved by Giordano--Putnam--Skau \cite{Giordano1999}. 

The measure-theoretic and topological settings may appear, at first glance, to be very similar; however, there are deep differences, for instance all ergodic group actions of countable amenable groups are orbit equivalent (Connes--Feldman--Weiss \cite{Connes1981}) while there exists a continuum of pairwise 
non-orbit equivalent actions of $\Z$ by homeomorphisms of a Cantor space. Still, it is interesting to investigate properties of full groups in topological dynamics, which has been done by several authors over the last twenty years or so. 

In both contexts discussed above, it is natural to consider the full group of an action as a topological group, the topology being induced by the topology of the ambient Polish group (measure-preserving bijections of $[0,1]$ in one case, homeomorphisms of the Cantor space in the other). The usefulness of this approach is however limited by the fact that the full group of an ergodic group action, or a minimal group action, is not closed in the ambient group; in the first case the full group is dense, in the second it seems that the closure is currently only understood for actions of $\Z^d$.

It then comes as a blessing that, in the measure-theoretic context, one can endow the full group with a stronger topology which turns it into a Polish group: the \emph{uniform topology}, induced by the distance given by $d(g,h)= \mu(\{x \colon g(x) \ne h(x)\})$. This paper grew out of the following question: can one do the same thing in the topological context? It is interesting to note that, shortly after the publication of \cite{Giordano1999}, Bezyglyi and Kwiatkowski \cite{Bezuglyi2002} introduced an analog of the uniform topology in the context of topological dynamics, which is however far from being as nice as the uniform topology of ergodic theory. This provides further motivation for trying to understand whether a nice group topology exists at all.

\begin{theorem*}
Let $\Gamma$ be a countable group acting minimally by homeomorphisms on a Cantor space $X$. Then any Hausdorff, Baire group topology on $\left[\Gamma\right]$ must extend the topology of pointwise convergence for the discrete topology on $X$.
Consequently, there is no second countable, Hausdorff, Baire group topology on $\left[\Gamma\right]$.
\end{theorem*}

This is bad news, but certainly not surprising ---if a Polish group topology existed for that group, it would have been considered a long time ago. In the same spirit, one can then wonder about the complexity of full groups inside the ambient automorphism group; in ergodic theory, full groups are always fairly tame, in the sense that they can be written as countable intersections of countable unions of closed sets \cite{Wei2005}. Yet again, the situation turns out to be more dire in topological dynamics.

\begin{theorem*}
The full group of a minimal homeomorphism of a Cantor space $X$ is a coanalytic non-Borel subset of $\Homeo(X)$.
\end{theorem*}

This led us to study the \emph{closure} of a full group inside the homeomorphism group; this is a Polish group, and is also a complete invariant for orbit equivalence if one is willing to restrict one's attention to actions of $\Z$, which we do in the last sections of this article. It follows from a theorem of Glasner--Weiss \cite{Glasner1995a} that the closure of the full group of a minimal homeomorphism $\varphi$ coincides with the group of homeomorphisms which preserve all $\varphi$-invariant measures. Using work of Bezuglyi--Medynets and Grigorchuk--Medynets, we obtain the following result \footnote{We originally proved this only for uniquely ergodic homeomorphisms; we thank K. Medynets for explaining how to make the argument work in general.}.

\begin{theorem*}
The closure of the full group of a minimal homeomorphism of the Cantor space is topologically simple (hence, the full group itself is also topologically simple).
\end{theorem*}

It is an open problem whether the full group of a minimal homeomorphism is simple.

In the case of uniquely ergodic homeomorphisms, we also provide a criterion for the existence of dense conjugacy classes in the closure of the full group (in terms of the values taken by the unique invariant measure on clopen sets), and use a Fra\"iss\'e theoretic approach to recover a result of Akin which describes a class of uniquely ergodic homeomorphisms with the property that the closure of their full group admits a comeager conjugacy class.

\medskip

\emph{Acknowledgements.} The second author's interest in the subject was kindled by two meetings organized by D. Gaboriau and funded by the ANR Network AGORA, and by lectures given by T. Giordano and K. Medynets at these meetings. We are grateful to I. Ben Yaacov, D. Gaboriau and B. Weiss for useful discussions and suggestions. We are indebted to K. Medynets for valuable comments made after reading an earlier version of this article.

The second author's research was partially funded by the ANR network AGORA, NT09-461407 and ANR project GRUPOLOCO, ANR-11-JS01-008.



\section{Background and terminology} 

We now go over some background material and discuss in more detail some facts that were mentioned briefly in the introduction.

Recall that a \emph{Cantor space} is a nonempty, zero-dimensional, perfect compact metrizable space; any two Cantor spaces are homeomorphic. Given a Cantor space $X$, we denote by $\Clop(X)$ the Boolean algebra of all clopen subsets of $X$, and by $\Homeo(X)$ the group of homeomorphisms of $X$. This group can be endowed with the topology whose basic open sets are of the form 
$$\{g \in \Homeo(X) \colon \forall i \in \{1,\ldots,n\}\ g(U_i)=V_i \} \ ,  $$
where $n$ is an integer and $U_i,V_i$ are clopen subsets of $X$. This turns $\Homeo(X)$ into a \emph{topological group}, namely the group operations $(g,h) \mapsto gh$ and $g \mapsto g^{-1}$ are continuous with respect to this topology. 

\begin{defn}
A \emph{Polish group} is a topological group whose topology is induced by a complete, separable metric.
\end{defn}

Picking a compatible distance $d$ on $X$, one can check that the topology defined above on $\Homeo(X)$ is a Polish group topology, a compatible complete distance being given by 
$$d(g,h)= \max_{x \in X}d(g(x),h(x)) + \max_{y \in X} d(g^{-1}(y),h^{-1}(y)) \ . $$
It might be a bit surprising at first that the two topologies we defined coincide; actually, this is a hint of a more general phenomenon: the unique second--countable group 
topologies on $\Homeo(X)$ are the coarse topology and the Polish group topology we defined above (this follows from results of \cite{Anderson1958}, \cite{Gamarnik1991} and \cite{Rosendal2007}).

Polish groups form a fairly general class of groups, yet the combination of separability and the use of Baire category methods make them relatively tame. The fact that the Baire category theorem holds in Polish groups is particularly important; we recall that, whenever $G$ is a topological group for which the Baire category theorem holds, $H$ is a separable topological group and $\varphi \colon G \to H$ is a Borel homomorphism, then $\varphi$ must actually be continuous (see e.g.~\cite{Kechris1995}*{Theorem~9.10}). 

\begin{defn}
Let $\Gamma$ be a countable group acting by homeomorphisms on a Cantor space $X$. We denote by $R_{\Gamma}$ the associated equivalence relation and define its \emph{full group} as the group of all homeomorphisms of $X$ which preserve each $\Gamma$-orbit; in symbols, 
$$\left[R_{\Gamma} \right]= \{g \in \Homeo(X) \colon \forall x \in X \exists \gamma \in \Gamma \ g(x)= \gamma \cdot x\} \ . $$
\end{defn}

As is the case in ergodic theory, the full group of an action of a countable group by homeomorphisms of a Cantor space $X$ completely remembers the associated equivalence relation, a fact made precise by the following definition and theorem.

\begin{defn}
Let $\Gamma_1,\Gamma_2$ be two countable groups acting by homeomorphisms on a Cantor space $X$, and let $R_{\Gamma_1}, R_{\Gamma_2}$ be the associated equivalence relations. We say that $R_{\Gamma_1}$ and $R_{\Gamma_2}$ are \emph{orbit-equivalent} if there is a homeomorphism $g$ of $X$ such that 
$$\forall x,y \in X \ \left( x R_{\Gamma_1} y \right) \Leftrightarrow \left( g(x) R_{\Gamma_2} g(y) \right) \ . $$
\end{defn}

\begin{theorem}[\cite{Giordano1999}, \cite{Medynets2011}]
Let $\Gamma_1$, $\Gamma_2$ be countable groups acting by homeomorphisms on a Cantor space $X$; assume that all orbits for both actions have cardinality at least $3$, and for any nonempty $U \in \Clop(X)$ and $i=1,2$ there exists $x \in U$ such that $\Gamma_i \cdot x$ intersects $U$ in at least two points. 

Denote by $R_{\Gamma_1}$ and $R_{\Gamma_2}$ the associated equivalence relations, and suppose that there exists an isomorphism $\Phi$ from $\left[R_{\Gamma_1} \right]$ to $\left[R_{\Gamma_2} \right]$. Then there must exist $g \in \Homeo(X)$ such that $\Phi(h)= g h g^{-1}$ for all $h \in \left[R_{\Gamma_1} \right]$.

Consequently, $\left[R_{\Gamma_1}\right ]$ and $\left[R_{\Gamma_1}\right ]$ are isomorphic if, and only if, $R_{\Gamma_1}$ and $R_{\Gamma_2}$ are orbit equivalent.

\end{theorem}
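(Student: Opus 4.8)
The plan is to establish the first, substantive assertion --- that every isomorphism $\Phi\colon \left[R_{\Gamma_1}\right] \to \left[R_{\Gamma_2}\right]$ is \emph{spatial}, i.e.\ of the form $h \mapsto g h g^{-1}$ for some $g \in \Homeo(X)$ --- and then read off the ``consequently'' clause. For the latter, the only extra ingredient is the elementary observation that, for the relation $R = R_\Gamma$ of any action as in the statement, one has $x\, R\, y$ if and only if $h(x) = y$ for some $h \in \left[R\right]$: if $y = \gamma\cdot x \neq x$, choose a clopen neighbourhood $U$ of $x$ with $U \cap \gamma(U) = \emptyset$ and let $h$ exchange $U$ and $\gamma(U)$ via $\gamma^{\pm 1}$ while fixing the rest of $X$. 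Granting $\Phi(h) = g h g^{-1}$, chasing $h(x) = y$ through this identity shows that $g$ is an orbit equivalence between $R_{\Gamma_1}$ and $R_{\Gamma_2}$; conversely, conjugation by any orbit equivalence is visibly an isomorphism of the full groups, which yields the stated equivalence.

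The real work is spatiality, and for it I would follow the usual reconstruction-theorem strategy: recover the clopen algebra $\Clop(X)$ from the abstract group $\left[R_\Gamma\right]$ in a sufficiently canonical way that $\Phi$ must induce an automorphism of it. The relevant invariants are the rigid stabilizers
$$G_U := \{h \in \left[R_\Gamma\right] \colon h \text{ is the identity on } X \setminus U\}, \qquad U \in \Clop(X).$$
Using the non-triviality hypothesis one checks at once that $U \mapsto G_U$ is an order-embedding of $(\Clop(X), \subseteq)$ into the subgroup lattice of $\left[R_\Gamma\right]$: if $U \not\subseteq V$, apply the hypothesis to the nonempty clopen set $U \setminus V$ to produce, by a clopen exchange as above, a nontrivial element supported inside $U \setminus V$, which lies in $G_U$ but not in $G_V$.

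The crux --- and, I expect, essentially all of the difficulty --- is a \emph{reconstruction lemma} asserting that the family $\{G_U \colon U \in \Clop(X)\}$ and its inclusion order are definable purely in terms of the group structure of $\left[R_\Gamma\right]$ (typically one recognizes the $G_U$ through iterated centralizers and commutation patterns, using that $G_U$ and $G_{X \setminus U}$ commute with trivial intersection and that $G_U$ equals its own second centralizer). This is where the hypotheses on the action are genuinely used: one needs every $G_U$ with $U$ nonempty to be ``fat'' --- after shrinking $U$, to contain pairs of nontrivial involutions with disjoint clopen supports, non-abelian subgroups, and so on --- which is exactly what is ensured by the combination of ``all orbits have at least three points'' and ``every nonempty clopen set meets a nontrivial orbit segment''; the cardinality $\geq 3$ assumption also excludes degenerate rigid cases (for orbits of size $2$ the full group is abelian and the theorem fails). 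Carrying this combinatorial analysis out --- as is done in \cite{Giordano1999} and \cite{Medynets2011} --- is delicate, and is the one part I would budget most of the effort for.

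Once the lemma is available the rest is formal. The $G_U$ being cut out by the same group-theoretic description in $\left[R_{\Gamma_1}\right]$ and $\left[R_{\Gamma_2}\right]$, $\Phi$ carries the one family onto the other, inducing an order-automorphism $\theta$ of $(\Clop(X), \subseteq)$; since the Boolean operations are determined by the order, $\theta$ is a Boolean automorphism, so by Stone duality $\theta(U) = g(U)$ for a unique $g \in \Homeo(X)$. For $h \in \left[R_{\Gamma_1}\right]$ one has $h G_U h^{-1} = G_{h(U)}$; applying $\Phi$ gives $\Phi(h)\, G_{g(U)}\, \Phi(h)^{-1} = G_{g(h(U))}$, while the same computation inside $\left[R_{\Gamma_2}\right]$ gives $\Phi(h)\, G_{g(U)}\, \Phi(h)^{-1} = G_{\Phi(h)(g(U))}$. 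Injectivity of $V \mapsto G_V$ forces $\Phi(h)(g(U)) = g(h(U)) = (g h g^{-1})(g(U))$ for every clopen $U$; as a homeomorphism of a Cantor space is determined by its action on clopen sets, $\Phi(h) = g h g^{-1}$, as required.
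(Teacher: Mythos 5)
The paper does not actually prove this theorem; it is imported as background from \cite{Giordano1999} and \cite{Medynets2011}, so there is no in-paper argument to compare your attempt against. Your outline is the standard spatiality-via-reconstruction strategy, and the formal bookkeeping is correct: the identity $h G_U h^{-1} = G_{h(U)}$, the passage from an order-automorphism of $\Clop(X)$ to a homeomorphism by Stone duality, the chase yielding $\Phi(h) = g h g^{-1}$, and the derivation of the ``consequently'' clause from spatiality all check out. Your double-centralizer observation is also correct, and it is in fact the algebraic content of Lemma~\ref{AlgCar1} of this paper: $C(G_U) = G_{X\setminus U}$, so $C(C(G_U)) = G_U$.

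The genuine gap is the reconstruction lemma itself --- the assertion that the family $\{G_U \colon U \in \Clop(X)\}$, with its inclusion order, is definable purely from the abstract group $\left[R_\Gamma\right]$. You state it and correctly flag it as the crux, but you do not prove it, deferring instead to the references. The two properties you list (that $G_U$ and $G_{X\setminus U}$ commute with trivial intersection, and that $G_U$ is its own double centralizer) are necessary features but nowhere near sufficient to single out the $G_U$ among all subgroups of $\left[R_\Gamma\right]$; many subgroups coincide with their double centralizer. The actual arguments of Giordano--Putnam--Skau and of Medynets are considerably more involved (Medynets, for instance, first isolates algebraically a suitable class of involutions and reconstructs $\Clop(X)$ from their supports, and it is there that the hypotheses ``orbits of size $\geq 3$'' and ``every nonempty clopen meets some orbit twice'' earn their keep). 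Since this lemma carries essentially all the weight of the theorem, what you have is an accurate roadmap with the central leg of the journey left untraveled, as you yourself acknowledge.
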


The above result was first proved by Giordano--Putnam--Skau \cite{Giordano1999} for \emph{minimal} actions, which we define now, and then extended by Medynets \cite{Medynets2011}.

\begin{defn}
Let $\Gamma$ be a countable group acting by homeomorphisms on a Cantor space. We say that the action is \emph{minimal} if every point has a dense orbit.
\end{defn}

Minimal actions are particularly well-studied when $\Gamma=\Z$. In that case the action is simply induced by one homeomorphism $\varphi$; accordingly, we will use the notation $\left[ \varphi \right]$ to denote the full group of the associated equivalence relation. Similarly, when the $\Z$-action associated to a homeomorphism $\varphi$ is minimal we simply say that $\varphi$ is minimal. In the case of minimal actions of $\Z$, a particular subgroup plays an important role and is well understood, mainly thanks to work of Matui.

\begin{defn}
Let $\varphi$ be a homeomorphism of a Cantor space $X$. Its \emph{topological full group} $\left[ [ \varphi ]\right]$ is the set of elements $g \in \Homeo(X)$ for which there is a finite clopen partition $U_1,\ldots,U_n$ of $X$ such that on each $U_i$ $g$ coincides with some power of $\varphi$. 
\end{defn}

The topological full group $\left[ [ \varphi ]\right]$ is a countable subgroup of $\left[\varphi \right]$; note that, if all orbits of $\varphi$ are infinite, then for any element $g$ in
$\left[\varphi \right]$ there exists a unique $n_x \in \Z$ such that $g(x) = \varphi^{n_x}(x)$; the group $\left[ [ \varphi ]\right]$ is simply made up of all $g$ for which the associated cocycle $x \mapsto n_x$ is continuous.
Another equivalent (though apparently weaker) definition is that $\left[ [ \varphi ]\right]$ is exactly the set of elements of $\left[\varphi \right]$ for which the map $x \mapsto n_x$ has a finite range. Indeed, each set of the form $\{x \in X \colon g(x)=\varphi^n(x)\}$ is closed, and these sets cover $X$ if $g$ belongs to $\left[\varphi\right]$, so if there are only finitely many nonempty such sets then they are clopen and $g$ belongs to $\left[ [ \varphi ]\right]$.

The next result elucidates the action of the full group of a minimal homeomorphism on the algebra of clopen sets. Before stating it we set some notation for the sequel.
\medskip

\emph{Notation.} Let $X$ be a Cantor space and $\varphi$ a homeomorphism of $X$. A Borel probability measure $\mu$ on $X$ is $\varphi$-\emph{invariant} if $\mu(A)=\mu(\varphi^{-1}(A))$ for any Borel subset $A \subseteq X$ (if this equality holds for clopen sets then it must hold for all Borel sets). Given a homeomorphism $\varphi$, we denote by $\mathcal M_{\varphi}$ its set of invariant probability measures, which is a nonempty compact, convex subset of the space of all probability measures on $X$. 
\medskip

\begin{theorem}[\cite{Glasner1995a}*{Lemma 2.5 and Proposition 2.6}]\label{GlasnerWeiss}
Let $\varphi$ be a minimal homeomorphism of a Cantor space $X$, and $A,B$ be clopen subsets of $X$. Then the following facts hold:
\begin{itemize}
\item If $\mu(A)< \mu(B) $ for all $\mu \in \mathcal M_{\varphi}$ then there exists $ g \in \left[ [\varphi ]\right]$ such that $g(A) \subset B $.

\item $\left( \forall \mu \in \mathcal M_{\varphi} \ \mu(A)= \mu(B) \right) \Leftrightarrow \left( \exists g \in \left[ \varphi \right] g(A) = B \right)$.
\end{itemize}

\end{theorem}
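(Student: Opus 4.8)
Since Theorem~\ref{GlasnerWeiss} is quoted from Glasner--Weiss I will only sketch a strategy. Both assertions rest on two facts about a minimal homeomorphism $\varphi$ of $X$, both coming from compactness. Writing $S^E_n(x)=\#\{\,0\le i<n\colon \varphi^i(x)\in E\,\}$ for $E\in\Clop(X)$, the first is a \emph{uniform ergodic estimate}: if $A,B\in\Clop(X)$ satisfy $\mu(A)<\mu(B)$ for every $\mu\in\mathcal M_\varphi$, then there is $N$ with $S^A_n(x)\le S^B_n(x)$ for all $x\in X$ and all $n\ge N$. One first gets $\varepsilon>0$ with $\mu(B)-\mu(A)\ge\varepsilon$ on the compact set $\mathcal M_\varphi$; if the estimate failed for every $N$ one would find $x_m$ and $n_m\to\infty$ with $S^A_{n_m}(x_m)>S^B_{n_m}(x_m)$, and any weak-$*$ cluster point $\nu$ of the averages $\frac1{n_m}\sum_{i<n_m}\delta_{\varphi^i(x_m)}$ would lie in $\mathcal M_\varphi$ yet satisfy $\nu(A)\ge\nu(B)$ (test against the continuous function $\mathbf 1_A-\mathbf 1_B$), a contradiction. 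The second fact: since $\varphi$ is aperiodic, for every $N$ there is a clopen $Y$ with $Y,\varphi(Y),\dots,\varphi^{N-1}(Y)$ pairwise disjoint (take $Y=\bigcap_{i<N}\varphi^{-i}(W_i)$ for pairwise disjoint clopen neighbourhoods $W_i$ of the distinct points $\varphi^i(x_0)$), so the first-return Kakutani--Rokhlin partition of $X$ over $Y$ — finite by minimality — has all of its towers of height $\ge N$.

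For the first assertion, fix $N$ as in the uniform estimate, a base $Y$ as above, and refine the base of each tower into clopen pieces on which the set of levels met by $A$ and the set met by $B$ are both constant. Over a piece $W$ at the bottom of a tower of height $h\ge N$, with $A$ meeting the levels $\alpha\subseteq\{0,\dots,h-1\}$ and $B$ meeting $\beta$, the uniform estimate gives $\#\alpha\le\#\beta$; choose a permutation $\pi$ of $\{0,\dots,h-1\}$ with $\pi(\alpha)\subseteq\beta$ and let $g$ equal $\varphi^{\pi(i)-i}$ on the level $\varphi^i(W)$. Carried out over all pieces, $g$ is locally a power of $\varphi$ and permutes the levels of each tower, so $g\in[[\varphi]]$, and $g(A)\subseteq B$ by construction.

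The easy direction of the second assertion is that every $g\in[\varphi]$ preserves every $\mu\in\mathcal M_\varphi$. By aperiodicity $g(x)=\varphi^{n_x}(x)$ for a unique integer $n_x$, so the sets $A_n=\{x\colon g(x)=\varphi^n(x)\}$ form a Borel partition of $X$; for Borel $C$ one has $\mu(g^{-1}C)=\sum_n\mu(A_n\cap\varphi^{-n}C)=\sum_n\mu(\varphi^n(A_n)\cap C)$ by $\varphi$-invariance, and since $\varphi^n(A_n)=\{y\colon g^{-1}(y)=\varphi^{-n}(y)\}$ the sets $\varphi^n(A_n)$ again partition $X$, whence $\mu(g^{-1}C)=\mu(C)$. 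Applying this to $g$ and to $g^{-1}$: if $g(A)=B$ then $\mu(B)=\mu(g(A))=\mu(A)$ for every $\mu$.

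It remains to prove that $\mu(A)=\mu(B)$ for all $\mu\in\mathcal M_\varphi$ implies $g(A)=B$ for some $g\in[\varphi]$. Replacing $A,B$ by $A\setminus B$ and $B\setminus A$ — an orbit-preserving homeomorphism between these, extended by the identity on $A\cap B$ and on $X\setminus(A\cup B)$, yields the required $g$ — we may assume $A\cap B=\emptyset$ and must produce an orbit-preserving homeomorphism $h\colon A\to B$. One runs a back-and-forth over an enumeration of $\Clop(X)$, keeping clopen residuals $A_n\subseteq A$, $B_n\subseteq B$ with $\mu(A_n)=\mu(B_n)$ for all $\mu$ and an orbit-preserving matching already built on $A\setminus A_n$, using two moves: to drive the residual measure below any prescribed $\delta$ one applies, inside $A_n$ and $B_n$, the Kakutani--Rokhlin within-tower matching of the first assertion, which works because now $\sup_{\mu}|\mu(A_n)-\mu(B_n)|=0$ so the uniform estimate controls the leftover once the tower heights are large enough; and to force a given basic clopen set into the domain one peels off a proper clopen $C\subsetneq A_n$, so that $\mu(C)<\mu(B_n)$ for all $\mu$ — legitimate since minimality and compactness give $\inf_{\mu}\mu(D)>0$ for every nonempty $D\in\Clop(X)$ — and absorbs $C$ by the first assertion (if the basic set already contains all of $A_n$ one instead applies the first move). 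I expect the main obstacle to be precisely the bookkeeping here: because $X$ is compact one cannot exhaust the clopen set $A$ by a strictly increasing sequence of clopen sets, so the two moves must be interleaved, and the partitions chosen to also refine finer and finer clopen partitions into small pieces, in such a way that the residuals $A_n$ shrink not merely in measure but to the empty set and the partial matchings converge to an honest homeomorphism of $X$ — this balancing is the technical heart of the argument.
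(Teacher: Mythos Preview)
The paper does not prove Theorem~\ref{GlasnerWeiss}: it is quoted from \cite{Glasner1995a} and stated without proof, followed only by a remark. You recognize this yourself in your opening sentence, so there is no proof in the present paper to compare your sketch against.

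For what it is worth, your outline is faithful to the original Glasner--Weiss arguments. The first assertion via the uniform ergodic estimate (weak-$*$ compactness of $\mathcal M_\varphi$) combined with a level-permutation on a tall Kakutani--Rokhlin partition is essentially their Lemma~2.5; your treatment of the easy direction of the second assertion is correct; and for the hard direction you correctly identify the back-and-forth bookkeeping --- driving the clopen residuals to $\emptyset$ rather than merely to measure zero --- as the technical heart, which is indeed where the work in their Proposition~2.6 lies.
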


\begin{remark}
In both cases, one can add the assumption that $g^2=1$ to the right-hand statement. It is useful that in the first statement above, one can find $g$ in the topological full group and not merely in the full group; this is not always possible when it comes to the second statement.
\end{remark}


\section{Topologies on full groups.}\label{topology}
Throughout this section we let $\Gamma$ denote a countable group acting on a Cantor space $X$, and $R$ denote the associated equivalence relation. We make the following assumption on the action of $\Gamma$: given any nonempty open $U \subseteq X$, there exist $x \ne y \in U$ and $\gamma \in \Gamma$ such that $\gamma \cdot x = y$. Equivalently, there exists no nonempty open subset $U \subseteq X$ such that the restriction of $R$ to $U$ is trivial.  

Note that this assumption implies that, given any nonempty open $U$, there exists $g \in \left[ R \right]$ and a clopen $V$ such that $g^2=1$, $g(V)$ and $V$ are nonempty disjoint clopen subsets of $U$ and $g$ coincides with the identity outside of $V \cup g(V)$. We now point out a further consequence.

\begin{lemma} \label{InfiniteOrbits}
The set $\Omega =\{x \in X \colon x \text{ is an accumulation point of } \Gamma \cdot x\}$ is dense $G_{\delta}$ in~$X$.
\end{lemma}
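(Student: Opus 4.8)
The plan is to exhibit $\Omega$ as a countable intersection of dense open subsets of $X$ and then invoke the Baire category theorem, which applies since $X$, being compact metrizable, is a Baire space.

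First I would set up the $G_\delta$ description. Using zero-dimensionality, fix a countable basis $(V_n)$ of $X$ consisting of clopen sets, and for each $n$ put
$$B_n=\bigcup_{\gamma\in\Gamma}\Big(\gamma^{-1}(V_n)\cap\{x\in X:\gamma\cdot x\ne x\}\Big),\qquad O_n=(X\setminus V_n)\cup B_n.$$
Each $\gamma^{-1}(V_n)$ is clopen and each set $\{x:\gamma\cdot x\ne x\}$ is open, being the complement of the fixed-point set of the homeomorphism $x\mapsto\gamma\cdot x$ (which is closed, as $X$ is Hausdorff); hence $B_n$ is open, and since $V_n$ is clopen, so is $O_n$. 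I would then check that $\Omega=\bigcap_n O_n$. If $x\in\Omega$ and $x\in V_n$, then $V_n$ is a neighbourhood of $x$ meeting $(\Gamma\cdot x)\setminus\{x\}$, which is exactly the assertion $x\in B_n$; so $x\in O_n$ for every $n$. Conversely, if $x\in\bigcap_n O_n$ and $W$ is any open neighbourhood of $x$, choose $n$ with $x\in V_n\subseteq W$; then $x\in O_n$ forces $x\in B_n$, i.e.\ $\gamma\cdot x\in V_n\subseteq W$ and $\gamma\cdot x\ne x$ for some $\gamma\in\Gamma$, so every neighbourhood of $x$ meets $(\Gamma\cdot x)\setminus\{x\}$ and $x\in\Omega$.

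Next I would prove that each $O_n$ is dense. Since $X\setminus V_n$ is closed, $\overline{O_n}=(X\setminus V_n)\cup\overline{B_n}$, so it suffices to show that $B_n$ is dense in $V_n$, i.e.\ that it meets every nonempty open $U\subseteq V_n$. This is the only place the standing assumption on the action enters: applied to $U$, it yields $x\ne y$ in $U$ and $\gamma\in\Gamma$ with $\gamma\cdot x=y$; then $x\in U$, $\gamma\cdot x=y\in U\subseteq V_n$ and $\gamma\cdot x\ne x$, so $x\in U\cap B_n$. With all the $O_n$ dense open, the Baire category theorem gives that $\Omega=\bigcap_n O_n$ is a dense $G_\delta$ subset of $X$.

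I do not expect a genuine obstacle here: the density of the $O_n$ is an immediate consequence of the hypothesis on the action, and the only point requiring a little care is the bookkeeping of the first paragraph — correctly translating ``$x$ is an accumulation point of $\Gamma\cdot x$'' into membership in the open sets $O_n$, and verifying the openness of these sets, which relies on $X$ being Hausdorff and zero-dimensional.
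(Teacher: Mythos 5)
Your proof is correct and follows essentially the same route as the paper's: both exhibit $\Omega$ as a countable intersection of dense open sets and invoke the Baire category theorem, with density of each piece coming directly from the standing assumption on the action. The only cosmetic difference is that the paper indexes the intersection by $\varepsilon>0$ using a compatible metric, while you index it by a countable clopen basis $(V_n)$ — the two bookkeeping devices are interchangeable here.
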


\begin{proof}
Fix a compatible metric $d$ on $X$. Recall that $x$ is an accumulation point of $\Gamma \cdot x$ if, and only if, the following condition holds:
$$\forall \varepsilon >0 \exists \gamma \in \Gamma \ \left(\gamma \cdot x \ne x \text{ and } d(\gamma \cdot x, x) < \varepsilon \right) $$
The condition between brackets is open, showing that $\Omega$ is indeed $G_{\delta}$. By the Baire category theorem, to check that $\Omega$ is dense we only need to prove that for any $\varepsilon >0$ the set $\{x \colon \exists \gamma \in \Gamma \ \gamma \cdot x \ne x \text{ and } d(\gamma \cdot x, x) < \varepsilon \}$ is dense, and this follows immediately from our assumption on the action.
\end{proof}

For the next two lemmas and proposition, we let $\tau$ denote a group topology on $\left[ R \right]$ which is Hausdorff and such that $(\left[ R \right],\tau)$ is a Baire space. 

\begin{lemma}\label{AlgCar1}
For any nonempty clopen subset $U$ of $X$, the set $\Omega_U =\{ g \in \left[ R \right] \colon g_{\upharpoonright U}=id_{\upharpoonright U}\}$ is $\tau$-closed.
\end{lemma}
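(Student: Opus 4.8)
The plan is to give a purely algebraic description of $\Omega_U$ inside the abstract group $\left[R\right]$, after which $\tau$-closedness becomes automatic and uses only that $\tau$ is a Hausdorff group topology (not even the Baire hypothesis). Concretely, let $\mathcal I_U$ denote the set of involutions $h \in \left[R\right]$ (i.e.\ $h^2=1$) that coincide with the identity outside some clopen subset of $U$. I claim that $\Omega_U$ is exactly the centralizer of $\mathcal I_U$ in $\left[R\right]$. Granting this, for each fixed $h \in \mathcal I_U$ the map $g \mapsto ghg^{-1}h^{-1}$ is $\tau$-continuous (the group operations are) and $\{1\}$ is $\tau$-closed (Hausdorffness), so $\{g \colon gh=hg\}$ is $\tau$-closed; intersecting over all $h \in \mathcal I_U$ shows that $\Omega_U$ is $\tau$-closed.

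So the real work is the claim, which splits into two inclusions. For $\Omega_U \subseteq \{g \colon gh=hg\ \text{for all}\ h\in\mathcal I_U\}$: if $g$ fixes $U$ pointwise then so does $g^{-1}$, and for $h\in\mathcal I_U$ equal to the identity off a clopen $V\subseteq U$ one checks directly, distinguishing $x\in V$ from $x\notin V$, that $ghg^{-1}(x)=h(x)$ — on $V$ because $g$ and $g^{-1}$ act trivially there and $h(V)=V\subseteq U$, and off $V$ because $g^{-1}(x)$ cannot land in $V$ (otherwise $x=g(g^{-1}(x))=g^{-1}(x)\in V$). For the reverse inclusion I argue by contraposition: if $g(x_0)\ne x_0$ for some $x_0\in U$, use zero-dimensionality of $X$ to find a clopen $W$ with $x_0\in W\subseteq U$ and $g(W)\cap W=\emptyset$, then invoke the observation recorded just before Lemma~\ref{InfiniteOrbits} (a consequence of our standing assumption on the action) to produce a nontrivial $h\in\mathcal I_U$ which coincides with the identity outside a clopen subset of $W$. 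Since $h$ is the identity off $W$, its conjugate $ghg^{-1}$ is the identity off $g(W)$; as $W$ and $g(W)$ are disjoint, any $y\in W$ with $h(y)\ne y$ satisfies $ghg^{-1}(y)=y\ne h(y)$, so $g$ is not in the centralizer of $\mathcal I_U$.

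I do not expect a serious obstacle: the only non-formal ingredient is the existence of a nontrivial involution in $\left[R\right]$ supported in an arbitrary nonempty clopen set, which is precisely what our assumption on the action guarantees. The one point that requires care is in the reverse inclusion — correctly locating the support of the conjugate $ghg^{-1}$ inside $g(W)$ rather than inside $W$, which is exactly what makes the disjointness $W\cap g(W)=\emptyset$ effective.
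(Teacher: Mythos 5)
Your proof is correct and follows essentially the same strategy as the paper's: characterize $\Omega_U$ algebraically as the centralizer of a family of elements supported in $U$ (the paper uses all elements with support in $U$, you use involutions — an immaterial difference), then observe that each commutation condition is $\tau$-closed by Hausdorffness, with the nontrivial containment proved by contraposition via a nontrivial involution supported in a clopen $W\subseteq U$ disjoint from $g(W)$. Your remark that the Baire hypothesis is not used here matches the paper, which first invokes it only in Proposition~\ref{AlgCar3}.
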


\begin{proof}
We claim that $g \in \left[ R \right]$ coincides with the identity on $U$ if, and only if, $gh=hg$ for any $h \in \left[ R \right]$ whose support is contained in $U$. Each set $\{g \colon gh=hg\}$ is closed since $\tau$ is a Hausdorff group topology, hence if we prove this claim we can conclude that $\Omega_U$ is an intersection of closed subsets of $\left[ R \right]$ so $\Omega_U$ is closed.

Now to the proof of the claim: one inclusion is obvious; to see the converse, assume that there exists $x \in U$ such that $x \ne g(x)$. This gives us a clopen subset $W$ of $U$ such that $W$ and $g(W)$ are disjoint. By assumption, there exists a clopen subset $V$ of $W$ and an involution $h \in \left[ R \right]$ with support contained in $W$ (hence in $U$) and such that $V$ and $h(V)$ are disjoint subsets of $W$. Then $hg(V)=g(V)$ is disjoint from $gh(V)$, showing that $g$ and $h$ do not commute.
\end{proof}

\begin{lemma}\label{AlgCar2}
For any clopen subset $U$ of $X$, the set $\Sigma_U =\{g \in \left[ R \right] \colon g(U)=U\}$ is $\tau$-closed.
\end{lemma}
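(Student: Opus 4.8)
The plan is to characterize membership in $\Sigma_U$ purely in terms of the already-established closed sets $\Sigma_{U}$... no — in terms of the sets $\Omega_V$ from Lemma \ref{AlgCar1}, together with the closed conditions coming from the Hausdorff group topology. The key observation is that $g(U)=U$ if and only if $g$ normalizes, in a suitable sense, the pointwise stabilizers of clopen subsets of $U$ and of its complement. Concretely, I would first prove the following claim: for $g \in [R]$, one has $g(U)=U$ if and only if for every $h \in [R]$ whose support is contained in $U$, the conjugate $ghg^{-1}$ also has support contained in $U$ (equivalently, $ghg^{-1}$ commutes with every element of $[R]$ supported in $X \setminus U$, i.e. lies in $\Omega_{X\setminus U}$). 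Granting the claim, $\Sigma_U = \bigcap_{h} \{g : ghg^{-1} \in \Omega_{X \setminus U}\}$, where $h$ ranges over elements supported in $U$; since conjugation is $\tau$-continuous (as $\tau$ is a group topology) and $\Omega_{X\setminus U}$ is $\tau$-closed by Lemma \ref{AlgCar1}, each set in the intersection is $\tau$-closed, hence so is $\Sigma_U$.

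It remains to establish the claim. One direction is immediate: if $g(U)=U$ then $g(X\setminus U)=X\setminus U$, so conjugating an element supported in $U$ by $g$ yields an element supported in $g(U)=U$. For the converse, suppose $g(U) \ne U$; I want to produce an $h$ supported in $U$ such that $ghg^{-1}$ is not supported in $U$. If $g(U) \not\subseteq U$, pick $x \in U$ with $g(x) \notin U$. Using the standing assumption on the action, find a clopen $V \subseteq U$ with $x \in V$, small enough that $g(V) \cap U = \emptyset$, and an involution $h \in [R]$ with support contained in $V$ moving $x$ — more carefully, by the consequence of the assumption recorded before Lemma \ref{InfiniteOrbits}, shrink to get $h$ with $h(x) \ne x$ and $\mathrm{supp}(h) \subseteq V \subseteq U$. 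Then $ghg^{-1}$ moves $g(x)$, and $g(x) \notin U$, so $ghg^{-1}$ is not supported in $U$. If instead $g(U) \subseteq U$ but the inclusion is proper, then $g^{-1}(U) \supsetneq U$, so pick $y \in g^{-1}(U) \setminus U$, i.e. $y \notin U$ but $g(y) \in U$; apply the same argument to $g^{-1}$ in place of $g$ (note $g^{-1}(U) \ne U$ as well), producing $h$ supported in $U$ with $g^{-1}hg$ not supported in $U$, which rephrases to $g h' g^{-1}$ not supported in $U$ for $h' = g^{-1} h g$... — cleaner: simply observe that $g(U) \ne U$ is equivalent to $g^{-1}(U) \ne U$, and in the proper-inclusion case $g^{-1}(U) \not\subseteq U$, so the first case applied to $g^{-1}$ gives an $h$ supported in $U$ with $g^{-1} h g \notin \Omega_{X\setminus U}$; since $\Sigma_U$ is a subgroup, $g \in \Sigma_U \Leftrightarrow g^{-1} \in \Sigma_U$, and we may as well have characterized $\Sigma_U$ via $g^{-1} h g$ throughout, so no genuine asymmetry arises.

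The main obstacle I anticipate is purely bookkeeping: ensuring that the clopen set $V$ can be chosen simultaneously inside $U$, containing a point moved by $g$ out of $U$, and disjoint from its own $g$-image, so that the conjugated involution genuinely has support escaping $U$. This is handled by the zero-dimensionality of $X$ (shrink $V$ around $x$ until $g(V) \cap U = \emptyset$, possible since $g$ is a homeomorphism and $g(x) \notin U$) combined with the standing assumption to find the involution inside $V$. No Baire category is needed for this lemma — only that $\tau$ is a Hausdorff group topology, so that conjugation is continuous and centralizers are closed.
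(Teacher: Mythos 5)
Your plan — reduce membership in $\Sigma_U$ to conjugacy conditions involving the closed sets from Lemma~\ref{AlgCar1}, then use continuity of the group operations — is the same as the paper's. However, the claim you base the argument on is false as stated. You assert: $g(U)=U$ if and only if for every $h\in[R]$ with support contained in $U$, the conjugate $ghg^{-1}$ also has support contained in $U$. The ``only if'' is fine, but the ``if'' fails whenever $g(U)\subsetneq U$: for any such $g$ and any $h$ supported in $U$, one has $\operatorname{supp}(ghg^{-1})=g(\operatorname{supp}(h))\subseteq g(U)\subsetneq U$, so \emph{no} witness $h$ exists. In other words, the one-sided conjugation condition characterizes $\{g : g(U)\subseteq U\}$, not $\Sigma_U$. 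You sense this when you hit the proper-inclusion case, but the patch is incoherent: you produce $h$ supported in $U$ with $g^{-1}hg$ not supported in $U$ and then set $h'=g^{-1}hg$, observing $gh'g^{-1}=h$ --- but $h$ \emph{is} supported in $U$, so $h'$ is not a witness for the original condition (and $h'$ itself is not supported in $U$, so it is not even in the family you are intersecting over). Saying ``we may as well have characterized $\Sigma_U$ via $g^{-1}hg$ throughout'' does not help either, since that version has the symmetric defect when $g^{-1}(U)\subsetneq U$.

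The clean repair is exactly the paper's first move, which your proposal omits: since $\tau$ is a group topology, $g\mapsto g^{-1}$ is a homeomorphism, and $\Sigma_U=\Sigma_U'\cap(\Sigma_U')^{-1}$ where $\Sigma_U'=\{g:g(U)\subseteq U\}$; so it suffices to show $\Sigma_U'$ is $\tau$-closed. Your conjugation condition \emph{does} correctly characterize $\Sigma_U'$ (equivalently, the paper characterizes it as $g^{-1}hg\in\Omega_U$ for all $h\in\Omega_U$), and then only the case $g(U)\not\subseteq U$ needs handling, which you do correctly. One small further simplification: you need not insist that the involution $h$ move the specific point $x$; any nontrivial involution supported in a clopen $V\subseteq U$ with $g(V)\cap U=\emptyset$ moves some $y\in V$, and then $ghg^{-1}$ moves $g(y)\notin U$, which is all that is required.
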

\begin{proof}
We may assume that $U$ is nonempty; also, since $\tau$ is a group topology and $(g(U)=U) \Leftrightarrow (g(U) \subseteq U \text{ and } g^{-1}(U) \subseteq U)$, we only need to show that 
$\{g \in \left[ R \right] \colon g(U) \subseteq U \}= \Sigma_U'$ is closed in $\left[ R \right]$. To that end, one can use the same strategy as above:  this time, we claim that $g \in \Sigma_U'$ if and only if, for any $h$ which coincides with the identity on $U$, $g^{-1}hg$ coincides with the identity on $U$. Proving this will show that $\Sigma_U'$ is an intersection of closed sets (by Lemma \ref{AlgCar1}), which gives the result.

Again, one inclusion is obvious; to see the converse, we assume that $g(U)$ is not contained in $U$. Then there exists a nonempty clopen subset $W$ of $U$ such that $g(W) \cap U= \emptyset$. 
One can find a nontrivial involution $h$ with support in $g(W)$. This gives us a nonempty clopen $V \subseteq U$ such that $hg(V)$ and $g(V)$ are disjoint, hence $g^{-1}hg$ does not coincide with the identity on $U$. 
\end{proof}

\begin{prop} \label{AlgCar3}
The set $\{ g \in \left[ R \right] \colon g(x)=x\}$ is $\tau$-clopen for all $x \in X$.

\end{prop}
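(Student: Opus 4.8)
The plan is to prove that the subgroup $H := \{g \in [R] \colon g(x) = x\}$ is $\tau$-open; since an open subgroup of a topological group is automatically closed, this will give that $H$ is $\tau$-clopen. I will follow the classical route: show that $H$ has the Baire property and is $\tau$-non-meager, and then conclude with Pettis' theorem.

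The first and only delicate point is to check that $H$ is $\tau$-$G_\delta$, for which I would use crucially that a Cantor space has only countably many clopen subsets. For clopen sets $W', W''$ set $T_{W',W''} := \{g \in [R] \colon g(W') = W''\}$; this set is $\tau$-closed, being either empty or a left-translate $g_1 \Sigma_{W'}$ of the $\tau$-closed set $\Sigma_{W'}$ of Lemma \ref{AlgCar2} (left-translations being $\tau$-homeomorphisms, as $\tau$ is a group topology). Now $g(x) = x$ holds if and only if there is no pair of clopen sets $W' \ni x$, $W'' \not\ni x$ with $g(W') = W''$: if $g(x) \ne x$ one produces such a pair by picking a clopen $W''$ separating $g(x)$ from $x$ and taking $W' := g^{-1}(W'')$; conversely, if $g(x) = x$ then $x \in W'$ forces $x \in g(W')$, so $g(W') \ne W''$ whenever $x \notin W''$. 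Hence
$$H = \bigcap \left\{ [R] \setminus T_{W',W''} \colon W', W'' \in \Clop(X),\ x \in W',\ x \notin W'' \right\} \ , $$
a countable intersection of $\tau$-open sets, so $H$ has the Baire property in $\tau$. The transporter sets are essential here: $H$ is in general strictly larger than any intersection of setwise stabilisers $\Sigma_U$, since an element of $[R]$ may fix $x$ while preserving no clopen neighbourhood of $x$.

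Next I would observe that $H$ has countable index in $[R]$: its left cosets are exactly the sets $\{g \in [R] \colon g(x) = y\}$ as $y$ ranges over the orbit of $x$ under $[R]$, which is contained in $\Gamma \cdot x$ and hence countable. Since $([R],\tau)$ is a nonempty Baire space it is non-meager in itself, so at least one of these countably many cosets is $\tau$-non-meager; as each coset is a $\tau$-homeomorphic image of $H$, the subgroup $H$ is $\tau$-non-meager.

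Finally, $H$ is a non-meager subset with the Baire property in the Baire topological group $([R],\tau)$, so by Pettis' theorem (see \cite{Kechris1995}*{Theorem~9.9}) the set $H^{-1}H = H$ is a $\tau$-neighbourhood of the identity; being a subgroup, $H$ is therefore $\tau$-open, hence $\tau$-clopen. I expect the $G_\delta$ representation of $H$ to be the only nontrivial ingredient; everything after it is the standard argument by countable index, Baire category and Pettis' theorem.
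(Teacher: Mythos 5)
Your proof is correct, and it takes a genuinely different route from the paper's. The paper argues in two stages through automatic continuity of Borel homomorphisms (\cite{Kechris1995}*{Theorem 9.10}): first it observes that Lemma~\ref{AlgCar2} makes the inclusion $([R],\tau)\to\Homeo(X)$ a Borel, hence continuous, map, which yields that $\{g : g(x)=x\}$ is $\tau$-closed; then it considers the induced Borel homomorphism $([R],\tau)\to\Sym(\Gamma\cdot x)$ into the (separable) permutation group of the countable orbit, whose continuity upgrades $\tau$-closed to $\tau$-clopen because point stabilizers are clopen in $\Sym(\Gamma\cdot x)$. You instead apply Pettis' theorem directly inside $([R],\tau)$: the transporter sets $T_{W',W''}$ give a clean $G_\delta$ presentation of $H$ (so $H$ has the Baire property), the countable index of $H$ together with $[R]$ being Baire gives non-meagerness, and Pettis then forces $H$ to be open, hence clopen. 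Both arguments rest on the same two ingredients (countability of $\Clop(X)$ via Lemma~\ref{AlgCar2}, countability of the orbit $\Gamma\cdot x$, and the Baire assumption on $\tau$), and indeed the automatic-continuity theorem the paper invokes is itself proved via Pettis; but your version is more self-contained, as it avoids introducing the auxiliary representations into $\Homeo(X)$ and $\Sym(\Gamma\cdot x)$, while the paper's route has the side benefit of making the continuity of the inclusion $([R],\tau)\hookrightarrow\Homeo(X)$ explicit, which is a statement of independent interest.
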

\begin{proof}
The result of Lemma \ref{AlgCar2} shows that the natural inclusion map from $(\left[ R \right],\tau)$ to $\Homeo(X)$ is Borel. Since $\tau$ is assumed to be Baire (this is the first time we are using that assumption) and $\Homeo(X)$ is separable, the inclusion map must be continuous, showing that each set $\{ g \in \left[ R \right] \colon g(x)=x\}$ is $\tau$-closed. Now, fix $x \in X$ and let $H$ denote the permutation group of the countable set $\Gamma \cdot x$, endowed with its permutation group topology. Since $\{g \in \left[ R \right] \colon g(\gamma_1 \cdot x)= \gamma_2 \cdot x\}$ is closed for all $\gamma_1,\gamma_2 \in \Gamma$, we see that the natural homomorphism from $(\left[ R \right],\tau)$ to $H$ (given by $g \mapsto (\gamma \cdot x \mapsto g(\gamma \cdot x))$ is Borel. Thus this homomorphism must be continuous, and $\{ g \in \left[ R \right] \colon g(x)=x\}$ is $\tau$-clopen.
\end{proof}

Let us sum up what we just proved.

\begin{theorem}
Let $\Gamma$ be a countable group acting by homeomorphisms on a Cantor space $X$. Assume that the restriction of the associated equivalence relation to a nonempty open subset of $X$ is never trivial. 

Then any Hausdorff, Baire group topology on $\left[ R \right]$ must extend the topology of pointwise convergence for the discrete topology on $X$.
Consequently, there is no second countable, Hausdorff, Baire group topology on $\left[ R \right]$.
\end{theorem}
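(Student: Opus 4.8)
The plan is to deduce both assertions directly from Proposition~\ref{AlgCar3}. Fix a Hausdorff, Baire group topology $\tau$ on $\left[R\right]$. First I would check that $\tau$ refines the topology $\tau_p$ of pointwise convergence for the discrete topology on $X$. A subbasis of $\tau_p$ is given by the sets $E_{x,y}=\{g\in\left[R\right]\colon g(x)=y\}$ for $x,y\in X$. If $E_{x,y}\neq\emptyset$, fix $g_0\in E_{x,y}$; then $E_{x,y}=g_0\cdot\{g\colon g(x)=x\}$, a left translate of the $\tau$-clopen subgroup provided by Proposition~\ref{AlgCar3}, hence $\tau$-open (as $\tau$ is a group topology). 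Empty sets $E_{x,y}$ are trivially $\tau$-open, so $\tau\supseteq\tau_p$, which is the first assertion.

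Now suppose moreover that $\tau$ is second countable and fix a countable basis $(W_n)_{n\in\omega}$ of $\tau$-neighborhoods of $1$. For each $n$ put $T_n=\bigcap_{g\in W_n}\{x\in X\colon g(x)=x\}$. Each $T_n$ is closed, being an intersection of fixed-point sets of homeomorphisms, and a point $x$ lies in $\bigcup_n T_n$ precisely when the stabilizer $\{g\colon g(x)=x\}$ is a $\tau$-neighborhood of $1$; by Proposition~\ref{AlgCar3} this holds for every $x\in X$, so $X=\bigcup_n T_n$. Since $X$ is a Baire space, some $T_{n_0}$ has nonempty interior and therefore contains a nonempty clopen set $U$. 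Then every element of $W_{n_0}$ is the identity on $U$, i.e.\ $W_{n_0}\subseteq\Omega_U$ in the notation of Lemma~\ref{AlgCar1}; as $\Omega_U$ is a subgroup containing a $\tau$-neighborhood of $1$, it is $\tau$-open.

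To reach a contradiction I would count the cosets of $\Omega_U$. On the one hand, $\left(\left[R\right],\tau\right)$ is separable (being second countable), so the $\tau$-open subgroup $\Omega_U$ has at most countably many cosets. On the other hand, the standing assumption (in the form recorded just after its statement at the beginning of Section~\ref{topology}) lets us choose inside $U$ pairwise disjoint clopen sets $O_k$ with $\mathrm{diam}(O_k)\to 0$ for a fixed compatible metric, each carrying a nontrivial involution $g_k\in\left[R\right]$ supported in $O_k$. For $S\subseteq\omega$ the product $g_S=\prod_{k\in S}g_k$ is then a well-defined element of $\left[R\right]$ --- the supports being disjoint with diameters tending to $0$, $g_S$ is a homeomorphism --- and $g_S^{-1}g_{S'}$ moves a point of $U$ whenever $S\neq S'$, so the $g_S$ represent $2^{\aleph_0}$ distinct cosets of $\Omega_U$. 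This contradiction establishes the second assertion. The one delicate point is getting $\Omega_U$ to be $\tau$-open: this genuinely uses that the pointwise stabilizers are $\tau$-clopen (Proposition~\ref{AlgCar3}), not merely $\tau$-closed (Lemma~\ref{AlgCar2}), so that the closed sets $T_n$ cover $X$ and the Baire category theorem can be applied; the translation argument of the first paragraph and the index computation of the last are then formal.
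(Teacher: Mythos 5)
Your first paragraph follows the paper verbatim: the first assertion is exactly the content of Proposition~\ref{AlgCar3}, rephrased via cosets of point stabilizers.

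For the second assertion you take a genuinely different route, and it is correct. The paper invokes Lemma~\ref{InfiniteOrbits} to get an uncountable dense $G_\delta$ set $\Omega$ of accumulation points, uses second countability (Lindel\"of) to extract a countable set $(x_i)\subset\Omega$ such that every stabilizer of a point of $\Omega$ already appears among $\{g : g(x_i)=x_i\}$, and then explicitly builds an element of $\left[R\right]$ fixing one chosen $x\in\Omega$ while moving every $x_i$, yielding a contradiction. You instead apply the Baire category theorem to $X$ itself: the clopen stabilizers of Proposition~\ref{AlgCar3} force $X$ to be the countable union of the closed sets $T_n$, so some $T_{n_0}$ contains a nonempty clopen $U$, whence $\Omega_U$ is a $\tau$-open subgroup; you then produce $2^{\aleph_0}$ cosets of $\Omega_U$ by gluing involutions with disjoint shrinking supports inside $U$, contradicting separability. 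Both arguments rest on the same standing assumption on the action, and both exploit a Baire-type covering argument, but on different spaces ($X$ in yours, the group itself via Lindel\"of in the paper's) and with different conclusions drawn (an open subgroup of uncountable index in yours, a violation of the extracted countable family's sufficiency in theirs). Your version is arguably more structural --- it exhibits explicitly the open subgroup of uncountable index that obstructs second countability --- and it bypasses Lemma~\ref{InfiniteOrbits} entirely, while the paper's argument stays closer in spirit to the pointwise-convergence topology. Either approach is fine; just be sure, when gluing the involutions $g_k$ into $g_S$, to record that one can always choose the $O_k$ with vanishing diameter inside nested nonempty clopen remainders of $U$ (possible since $X$ is perfect and zero-dimensional), as this is what makes $g_S$ continuous.
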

\begin{proof}
The statement in the first sentence corresponds exactly to the result of Proposition \ref{AlgCar3}. To see why the second statement holds, let us proceed by contradiction and assume that there exists a second countable, Hausdorff, Baire group topology on $\left[ R \right]$.

 We recall the result of Lemma \ref{InfiniteOrbits}: the set 
$\Omega$ made up of all $x$ such that $x$ is an accumulation point of $\Gamma \cdot x$ is dense $G_{\delta}$ in $X$, thus in particular uncountable.
Assuming $\tau$ is second countable, the Lindel\"off property implies that there exists a sequence $(x_i)_{i< \omega}$ of elements of $\Omega$ such that 
$$\{g \in \left[R\right] \colon \exists x \in \Omega \ g(x)=x \} = \bigcup_{i< \omega} \{g \in \left[R\right] \colon g(x_i)=x_i \} \ .$$
However, we claim that, for any countable subset $\{x_i\}_{i< \omega}$ of $\Omega$ and any $x \in \Omega \setminus \{x_i\}_{i< \omega}$, there exists $g \in \left[R\right]$ such that $g(x)=x$ and $g(x_i) \ne x_i$ for all $i$ ; granting this, we obtain the desired contraditcion. 

To conclude the proof, we briefly explain why the claim holds. Using the fact that each $x_i$ is an accumulation point of $\Gamma \cdot x_i$, one can construct inductively a sequence of clopen sets $U_j$ and elements $\gamma_j$ of $\Gamma$ with the following properties:
\begin{itemize}
\item For all $i$ $x_i \in \bigcup_{j \le i} (U_j \cup \gamma_j U_j)$.
\item For all $j$, the diameter of $U_j \cup \gamma_j U_j$ is less than $2^{-j}$, and $\gamma_j U_j \cap U_j = \emptyset$.
\item For all $j \ne k$, $(U_j \cup \gamma_j U_j) \cap (U_k \cup \gamma_k U_k) = \emptyset$.
\item For all $j$ $x \not \in \gamma_j U_j \cup U_j$ .
\end{itemize}
One can then define a bijection $g$ of $X$ by setting
$$g(y) = \begin{cases} \gamma_j(y) & \text{ if } y \in U_j \text{ for some } j \\
											\gamma_j^{-1}(y) & \text{ if } y \in \gamma_j(U_j) \text{ for some } j\\
											y & \text{ otherwise}
\end{cases} \ .$$
The fact that the diameter of $U_j \cup \gamma_j U_j$ vanishes ensures that $g$ is continuous, so $g$ belongs to $\left[R \right]$ and satisfies $g(x)=x$, $g(x_i) \ne x_i$ for all $i$.

\end{proof}

\begin{remark} 
It is not clear whether the Hausdorfness assumption is really needed: if $\left[ R \right]$ is a simple group, then a non-Hausdorff group topology is necessarily the coarse topology; indeed, the 
elements that cannot be separated from $1$ by an open subset form a normal subgroup of $\left[ R \right]$. So, if $\left[R \right]$ is simple, then the above result says that the unique Baire, second countable group topology on $\left[R \right]$ is the coarse topology. However, it is an open question whether $\left[ R \right]$ is simple, even in the case when $R$ is induced by a minimal action of $\Z$.
\end{remark}

The techniques of this section are close to those employed by Rosendal in \cite{Rosendal2005}, but it seems that his results do not cover the case studied here.


\section{Borel Complexity of the full group of a minimal homeomorphism.}
The following question was suggested to us by T. Tsankov: what is the complexity (in the sense of descriptive set theory) of the full group of an equivalence relation induced by a minimal action of a countable group on the Cantor space? 
We answer that question for $\Gamma=\Z$. Below we use standard results and notations of descriptive set-theory, borrowed from Kechris's book \cite{Kechris1995}.

In particular, we recall that if $A$ is a countable set then a \emph{tree} on $A$ is a subset $T$ of the set $A^{<\omega}$  of finite sequences of elements of $A$, closed under taking initial segments (see \cite{Kechris1995}*{Section 2} for information on descriptive-set-theoretic trees and a detailed exposition of related notions). The set $\mathcal T$ of trees on $A$ can be endowed with a topology that turns it into a Cantor space, by setting as basic open sets all sets of the form 
$$\{T \in \mathcal T \colon \forall s \in S \ s \in T \text{ and } \forall s' \in S' \ s' \not \in T\} \ , $$ 
where $S$ and $S'$ are finite subsets of $A^{< \omega}$.

A tree $T$ is said to be \emph{well-founded} if it has no infinite branches; in this case one can define inductively the \emph{rank} of an element $s$ of $A^{< \omega}$ by setting 
$$\rho_T(s)=\sup\{ \rho(s \smallfrown a)+1 \colon s \smallfrown a \in T\} \ .$$ 
In particular, elements not in $T$ and terminal nodes in $T$ all have rank $0$; then one defines  the rank $\rho(T)$ of $T$ as being equal to the supremum of all $\rho_T(s)+1$ for $s \in A^{<\omega}$; when $T$ is nonempty, this supremum is equal to $\rho_T(\emptyset)+1$ . 

All this being said, we can begin to work, which we do by pointing out the obvious: whenever $\Gamma$ is a countable group acting by homeomomorphisms on a Cantor space $X$, the full group of the associated equivalence relation $R$ is a co-analytic subset of the Polish group $\Homeo(X)$. This is simply due to the fact that each set $\{(g,x) \in \Homeo(X) \times X \colon g(x)= \gamma \cdot x\}$ is closed, and for all $g \in \Homeo(X)$ one has
$$g \in \left[R\right] \Leftrightarrow \forall x \in X \exists \gamma \in \Gamma \ g(x)= \gamma \cdot x \ . $$
The above line shows that $\left[R \right]$ is the co-projection of an $F_{\sigma}$ subset of $\Homeo(X) \times X$, hence is co-analytic. 

One might expect that the descriptive complexity of $\left[R \right]$ is not that high in the Borel hierarchy. For instance, in the measure-preserving context, full groups are always Borel of very low complexity: it is shown in \cite{Wei2005} that the full group of an aperiodic, probability-measure-preserving equivalence relation is a $\mathbf \Pi_{3}^0$-complete subset of the group of measure-preserving automorphisms (that is, in that case the full group is a countable intersection of countable unions of closed sets). Perhaps surprisingly, it turns out that full groups of minimal homeomorphisms are not Borel. 

Below, we denote by $\varphi$ a minimal homeomorphism of a Cantor space $X$ and recall that $\left[ \varphi\right]$ denotes the full group of the associated equivalence relation. 
We also denote by $\mathcal T$ the space of all trees on $\Clop(X)$, endowed with the topology discussed above. 

\begin{defn}
To each $g \in \Homeo(X)$ we associate a tree $T_g$ on $\Clop(X)$ as follows: for any sequence $(U_0,\ldots,U_n)$ of clopen sets,
$(U_0,\ldots,U_n) $ belongs to $T_g$ iff each $U_j$ is nonempty, $U_{j+1} \subseteq U_{j}$ for all $j \in \{0,\ldots,n-1 \}$ and $g(x) \neq \varphi^{\pm j}(x)$ for all $x \in U_j$.
\end{defn}

\begin{lemma}
The map $g \mapsto T_g$ is a Borel mapping from $\Homeo(X)$ to $\mathcal T$. For any $g \in \Homeo(X)$, $g$ belongs to $\left[\varphi\right]$ if, and only if, $T_g$ is well-founded. 
\end{lemma}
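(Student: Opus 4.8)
The plan is to prove the two assertions separately, starting with the well-foundedness characterization since it clarifies what the tree is doing. First I would observe that the definition of $T_g$ packages the following information: a branch of length $n+1$ is a decreasing chain of nonempty clopen sets $U_0 \supseteq U_1 \supseteq \cdots \supseteq U_n$ such that on each $U_j$ the homeomorphism $g$ differs from $\varphi^{j}$ and from $\varphi^{-j}$ at every point. The key point is that $g \notin [\varphi]$ exactly when there is a point $x$ such that $g(x) \neq \varphi^{k}(x)$ for all $k \in \Z$; I would want to turn such a point into an infinite branch, and conversely.

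For the direction ``$T_g$ well-founded $\Rightarrow g \in [\varphi]$'', I would argue contrapositively: suppose $g \notin [\varphi]$, so there is $x_0 \in X$ with $g(x_0) \neq \varphi^{\pm j}(x_0)$ for every $j \geq 0$ (using that $\varphi^0 = \mathrm{id}$ covers $j=0$ as well, and noting that if $g(x_0)=x_0$ then $x_0$ would be a fixed point — minimality of $\varphi$ forces all orbits to be infinite, so $g(x)=x$ is consistent with $g\in[\varphi]$, hence the relevant failure is $g(x_0)\notin\{\varphi^k(x_0):k\in\Z\}$). For each $j$, the set $\{x : g(x) \neq \varphi^{\pm j}(x)\}$ is open (it is the preimage under $x \mapsto (g(x),\varphi^j(x),\varphi^{-j}(x))$ of the complement of two diagonals, which is open), and contains $x_0$; so by zero-dimensionality I can pick a clopen neighborhood $U_j \ni x_0$ contained in it, and then refine so that $U_{j+1} \subseteq U_j$. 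The resulting sequence $(U_0, U_1, \ldots)$ is an infinite branch of $T_g$, so $T_g$ is not well-founded. Conversely, if $(U_0, U_1, \ldots)$ is an infinite branch, compactness gives a point $x \in \bigcap_j U_j$ (the $U_j$ are nonempty clopen and decreasing, so have the finite intersection property), and then $g(x) \neq \varphi^{\pm j}(x)$ for all $j$, i.e. $g \notin [\varphi]$.

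For Borelness of $g \mapsto T_g$: the topology on $\mathcal T$ is generated by the subbasic sets $\{T : s \in T\}$ and $\{T : s \notin T\}$ for $s \in \Clop(X)^{<\omega}$, so it suffices to check that for each fixed finite sequence $s = (U_0, \ldots, U_n)$ of clopen sets, the set $\{g \in \Homeo(X) : s \in T_g\}$ is Borel in $\Homeo(X)$. The conditions ``each $U_j \neq \emptyset$'' and ``$U_{j+1} \subseteq U_j$'' do not involve $g$ at all, so they are either always true or always false. The only condition involving $g$ is, for each $j \leq n$, that $g(x) \neq \varphi^{\pm j}(x)$ for all $x \in U_j$, equivalently $U_j \cap g^{-1}(\varphi^{j}(U_j)) $ hits no point where $g = \varphi^j$... more cleanly: $\{g : \forall x \in U_j\ g(x) \neq \varphi^j(x)\} = \{g : U_j \cap \{x : g(x) = \varphi^j(x)\} = \emptyset\}$, and I claim this is a closed (hence Borel) subset of $\Homeo(X)$, because $\{x : g(x) = \varphi^j(x)\}$ is closed in $X$ and the map $g \mapsto \{x : g(x)=\varphi^j(x)\}$ is ``upper semicontinuous'' in the appropriate sense — concretely, the set of $g$ for which this closed set meets the compact set $U_j$ is closed by continuity of the evaluation $(g,x)\mapsto (g(x),\varphi^j(x))$ together with compactness of $U_j$. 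Intersecting the finitely many such conditions over $j \leq n$ (and the $g$-free conditions) gives that $\{g : s \in T_g\}$ is closed, a fortiori Borel, which is more than enough.

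The main obstacle I anticipate is purely a matter of care rather than depth: getting the quantifier $\pm j$ and the role of $j=0$ straight (so that the branch really encodes ``$g(x)$ is not in the $\varphi$-orbit of $x$'' and not something weaker), and making sure the openness/clopenness extraction in the first direction is done uniformly enough to produce a genuine nested sequence. The descriptive-set-theoretic bookkeeping for the Borel claim is routine once one notes the topology on $\mathcal T$ is generated by those subbasic clopen sets and that each corresponding condition on $g$ is actually closed; no clever argument is needed there.
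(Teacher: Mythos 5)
Your proof is correct and follows essentially the same route as the paper's: Borelness is established by reducing to the closedness of the fixed-point-meets-$U$ condition via a compactness argument, and the well-foundedness characterization is proved by building nested clopen neighborhoods of a ``bad'' point in one direction and taking the (compact, hence nonempty) intersection of an infinite branch in the other.
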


\begin{proof}
We need to prove that for any finite sequence of nonempty clopen subsets $(U_0,\ldots,U_n)$ the set $\{g \in \Homeo(X) \colon (U_0,\ldots,U_n) \in T_g\}$ is Borel. For this, it is enough to show that for any nonempty clopen subset $U$ of $X$, the set $\{g \colon \forall x \in U \ g(x) \ne x \}$ is Borel. The complement of this set is $\{g \colon \exists x \in U \ g(x)=x \}$, which is closed because $U$ is clopen in $X$ and thus compact: if $g_n$ is a sequence of homeomorphisms of $X$ such that for all $n$ there exists $x_n \in U$ such that $g_n(x_n)=x_n$, and $g_n$ converges to $g$ in $\Homeo(X)$, then up to some extraction we can assume that $x_n$ converges to $x \in U$; the distance from $g(x_n)$ to $g_n(x_n)$ must converge to $0$, so $g_n(x_n)$ converges to $g(x)$, showing that $g(x)=x$. This concludes the proof that $g \mapsto T_g$ is Borel.

Next we fix $g \in \Homeo(X)$. We first assume that $g$ does not belong to $\left[ \varphi \right ]$, i.e.~there exists $x \in X$ such that $g(x) \ne \varphi^n(x)$ for all $n \in \Z$. Then, using the continuity of $g$ and $\varphi$, one can build by induction a decreasing sequence of clopen neighborhoods $U_i$ of $x$ such that for all $i$ and all $y \in U_i$ one has $g(y) \ne \varphi^{\pm i}(y)$, which yields an infinite branch of $T_g$.
Conversely, assume that $T_g$ is not well-founded and let $(U_i)_{i< \omega}$ be an infinite branch of $T_g$. Then $F= \bigcap_{i < \omega} U_i$ is nonempty, and for all $x \in F$ $g(x)$ is different from $\varphi^n(x)$ for all $n \in \Z$, showing that $g$ does not belong to $\left[ \varphi \right ]$.
\end{proof}

If $\left[\varphi\right]$ were Borel, the set $\mathcal T_{\varphi} = \{ T_g \colon g \in \left[ \varphi \right ] \}$ would be an analytic subset of $\mathcal T$, hence the boundedness principle for coanalytic ranks (see \cite{Kechris1995}*{Theorem 35.23}) would imply the existence of a countable ordinal $\alpha$ such that the rank of any element of $\mathcal T_{\varphi}$ is less than $\alpha$. We want to prove that it is not the case, so we need to produce elements of 
$\left[ \varphi \right ]$ such that the associated tree has arbitrarily large rank. Let us introduce some notation in order to make the work ahead simpler.

\begin{defn}
For any $g \in \left[ \varphi \right ]$ we let $\rho(g)$ denote the rank of $T_g$. For any finite sequence of clopen sets $(U_0,\ldots,U_n)$, we let $\rho_g(U_0,\ldots,U_n)$ denote the rank of $(U_0,\ldots,U_n)$ with regard to the tree $T_g$. If $\alpha$ and $\beta$ are ordinals, we write $\alpha\sim\beta$ to express that there are only finitely many ordinals between them; we write $\alpha\gtrsim\beta$ when $\alpha\geq\beta$ or $\alpha\sim\beta$.
\end{defn}

Note, as an encouraging sign that our introduction of $\rho$ is a good way to capture information about elements of $\left[ \varphi \right ]$, that the topological full group $\left[[ \varphi ] \right ]$ is exactly made up of all $g \in \Homeo(X)$ such that $\rho(g) < \omega$. 

In order to build elements of $\left[ \varphi \right ]$ such that the associated tree has arbitrarily large rank, the following observation will be crucial.

\begin{lemma}\label{rank_invariant_topfullgroup}
Let $g$ be an element of $\left[ \varphi \right ]$, and assume that $\rho(g) \ge \omega$. Then for any $h \in \left[[\varphi \right ]]$ one has $\rho(g)\sim\rho(hg)$.
\end{lemma}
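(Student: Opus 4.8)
The plan is to compare the trees $T_g$ and $T_{hg}$ directly, by showing that a long chain witnessing large rank in one of them can be transformed, at bounded cost, into a chain of comparable rank in the other. Since $h \in [[\varphi]]$, there is a finite clopen partition $X = W_1 \sqcup \cdots \sqcup W_k$ and integers $m_1,\ldots,m_k$ such that $h$ coincides with $\varphi^{m_i}$ on $W_i$; let $N = \max_i |m_i|$. The key point is that for any clopen $U$ on which $g$ avoids $\varphi^{\pm j}$ for all $j \le n$ (i.e.\ $(U_0,\ldots,U_n=U) \in T_g$ for an appropriate decreasing chain), one can refine $U$ to lie inside a single piece $W_i$ of the partition, and then on that piece $hg = \varphi^{m_i} g$; so $(hg)(x) = \varphi^{\pm j}(x)$ forces $g(x) = \varphi^{\pm j - m_i}(x)$, which is excluded provided $|{\pm j - m_i}| \le n$, in particular whenever $j + N \le n$. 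This is the mechanism by which a condition in $T_g$ of "depth $n$" yields a condition in $T_{hg}$ of "depth $n - N$", and symmetrically (using $h^{-1} \in [[\varphi]]$, which has the same bound) depth $n$ in $T_{hg}$ yields depth $n - N$ in $T_g$.

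Making this quantitative at the level of ranks, I would prove by induction on the ordinal $\alpha$ the statement: for every decreasing chain of nonempty clopen sets $(U_0,\ldots,U_n)$ with $\rho_g(U_0,\ldots,U_n) \ge \alpha$, and for every nonempty clopen $V \subseteq U_n$, one has $\rho_{hg}(\text{some chain ending below } V) \gtrsim \alpha$ once $n$ is large enough relative to $N$ — more precisely something like $\rho_{hg} \ge \alpha$ whenever one is willing to absorb $N$ into the chain length. The point of threading an arbitrary $V \subseteq U_n$ through the induction is exactly to have room to intersect with a piece $W_i$ at each step; the successor and limit cases are then routine manipulations of the definition $\rho_T(s) = \sup\{\rho_T(s \smallfrown a) + 1\}$. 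Applying this with $\alpha = \rho_g(U_0) = \rho(g) - 1$ (or the appropriate bookkeeping, using $\rho(T) = \rho_T(\emptyset)+1$) and $n$ ranging up through the chain gives $\rho(hg) \gtrsim \rho(g)$; the symmetric argument with $h^{-1}$, applied to $hg$ in place of $g$, gives $\rho(g) = \rho(h^{-1}(hg)) \gtrsim \rho(hg)$. Two inequalities of the form $\alpha \gtrsim \beta$ and $\beta \gtrsim \alpha$ together say exactly that $\alpha$ and $\beta$ differ by finitely many ordinals, i.e.\ $\rho(g) \sim \rho(hg)$. The hypothesis $\rho(g) \ge \omega$ is what guarantees this "finite shift" is harmless: a shift by the fixed finite quantity $N$ changes an infinite ordinal's rank only by finitely much, whereas for finite ordinals the shift would genuinely move the value (consistent with $[[\varphi]]$ being exactly $\{g : \rho(g) < \omega\}$, which is not rank-invariant).

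The step I expect to be the main obstacle is the bookkeeping in the inductive claim: one has to be careful that refining $U_n$ to a subset of a single $W_i$ does not decrease the rank uncontrollably, and that the decreasing-chain structure is maintained when we pass between $T_g$ and $T_{hg}$ along a chain whose indices are shifted by $m_i$ (and the $m_i$ may differ from step to step as the chain wanders between pieces of the partition). The clean way to handle this is to note that $\rho_g(U_0,\ldots,U_n) \le \rho_g(U_0,\ldots,U_{n-1}) $ fails in general but $\rho_g$ restricted to extensions behaves well, and that shrinking the last coordinate only increases (or keeps) the rank of the relevant subtree — so intersecting with $W_i$ is "free" — while the index shift by at most $N$ is absorbed by demanding the original chain have length at least $n + N$. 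Once that lemma is stated with the right quantifiers, the rest is a direct transcription.
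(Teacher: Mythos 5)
Your proposal identifies the right mechanism (an index shift of at most $N$ because $h$ acts as $\varphi^{m_i}$ with $|m_i|\le N$, the symmetric argument via $h^{-1}$, and the role of $\rho(g)\ge\omega$ in making a finite shift harmless), but it is not a proof: the ordinal induction you sketch is exactly where you say you expect trouble, and you do not resolve it. The specific complication you introduce --- refining $U$ to lie inside a single piece $W_i$ --- is both unnecessary and a genuine source of danger, since intersecting the last set in a chain with one $W_i$ can a priori drop the rank of the subtree below it; handling that pigeonhole-style at every ordinal stage is real extra work. Also note a small imprecision: since $h$ is applied after $g$, the relevant piece is the one containing $g(x)$, not $x$.

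The paper's proof packages your shift idea so cleanly that no induction is needed. Define the truncated tree
\begin{equation*}
T^k_g=\{(U_k,\dots,U_n):(U_0,\dots,U_n)\in T_g\text{ for some }U_0,\dots,U_{k-1}\},
\end{equation*}
where $k$ bounds $|m_i|$. The pointwise computation you already did (if $g(x)\ne\varphi^{\pm j}(x)$ for $|j|\le n$ and $hg(x)=\varphi^m(x)$, then $|m|>n-k$) works for every $x\in U$ at once --- no refinement into $W_i$'s --- and shows directly that $T^k_g\subseteq T_{hg}$. Combined with the elementary fact $\rho(T^k_g)\ge\rho(T_g)-k$ (chopping $k$ levels off a well-founded tree lowers its rank by at most $k$), this gives $\rho(T_{hg})\ge\rho(T_g)-k$, and the symmetric inequality $\rho(T_g)=\rho(T_{h^{-1}hg})\ge\rho(T_{hg})-k$ closes the argument. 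So: drop the refinement step, replace the transfinite induction by the tree inclusion $T^k_g\subseteq T_{hg}$, and the proof writes itself.
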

Since $\rho(g)=\rho(g^{-1})$, the above lemma also holds true when multiplying on the right by an element of the topological full group.

\begin{proof}
For $k<\omega$, let $$T^k_g=\{(U_k,\dots,U_n):(U_0,\dots,U_n)\in T_g\text{ for some }U_0,\dots,U_{k-1}\}.$$ If $h\in\left[[\varphi \right ]]$, then for some $k<\omega$ and for all $x\in X$ there is $j$ such that $|j|\leq k$ and $h(x)=\varphi^j(x)$. So if $g(x)\ne\varphi^j(x)$ for all $|j|\leq n$ but $hg(x)=\varphi^m(x)$ for some $m$, then $|m|>n-k$. This implies that $T^k_g\subseteq T_{hg}$. Since $\rho(T^k_g)\geq\rho(T_g)-k$, we get $\rho(T_{hg})\geq\rho(T_g)-k$, and similarly $\rho(T_g)=\rho(T_{h^{-1}hg})\geq\rho(T_{hg})-k$, proving the claim.
\end{proof}


When $U$ is a 
 subset of $X$ and $g$ belongs to $\left[ \varphi \right]$, we set
$$n(g,U)= \min(\{|k| \colon \exists x \in U \  g(x)= \varphi^k(x)\}) $$

\begin{lemma}\label{gluing_lemma}
Let $\alpha$ be an infinite ordinal belonging to $\{\rho(g) \colon g \in \left[ \varphi \right] \}$, and $N$ be an integer. For any nonempty clopen $U \subseteq X$, there exists 
$h \in \left[ \varphi \right]$ with support $S$ contained in $U$ (in particular, $h(U)=U$), such that $\rho(h) \gtrsim \alpha$ and $n(h,S) > N$.
\end{lemma}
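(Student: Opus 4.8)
The plan is to transport a high-rank element living on the whole space into a small clopen set using minimality, and then boost the value of the cocycle by conjugating (or composing) with an element of the topological full group that "stretches" orbits. First I would fix $g \in \left[\varphi\right]$ with $\rho(g) = \alpha$. Since $\alpha \ge \omega$, Lemma \ref{rank_invariant_topfullgroup} tells us that modifying $g$ by elements of the topological full group only changes the rank up to $\sim$, which is exactly the slack the statement allows. The idea is to first arrange that $g$ has support contained in $U$, and then separately arrange $n(\cdot, S) > N$, checking at each step that the rank stays $\gtrsim \alpha$.

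For the localization step, I would use minimality of $\varphi$ together with Theorem \ref{GlasnerWeiss}: minimality gives a clopen Kakutani–Rokhlin type decomposition, so inside any nonempty clopen $U$ one can find a clopen tower $V, \varphi(V), \dots, \varphi^{m-1}(V)$ whose base $V$ is nonempty and whose union is contained in $U$ (choose $V$ small and $m$ large). I would then conjugate $g$ by a homeomorphism that maps $X$ onto this tower in an orbit-preserving way — more precisely, replace $g$ by $h_0 = \theta g \theta^{-1}$ where $\theta \in \left[\varphi\right]$ (or even $\left[[\varphi]\right]$) "compresses" $X$ into $V \cup \varphi(V) \cup \cdots \cup \varphi^{m-1}(V) \subseteq U$. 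Such a $\theta$ exists by the second item of Theorem \ref{GlasnerWeiss} applied to suitably chosen clopen pieces of equal measure, together with the remark that one can take $\theta^2 = 1$; since $\rho$ is conjugation-invariant ($T_{\theta g \theta^{-1}}$ is obtained from $T_g$ by applying $\theta$ to each clopen coordinate, which is an automorphism of $\mathcal{T}$), we get $\rho(h_0) = \alpha$ exactly. The support of $h_0$ is then contained in $U$, so $h_0(U) = U$.

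It remains to inflate the cocycle so that $n(h_0, S) > N$ where $S = \operatorname{supp}(h_0) \subseteq U$. Here I would compose with a carefully chosen element $k$ of the topological full group supported in $U$: on the (finitely many) clopen pieces where $h_0$ agrees with $\varphi^j$ for small $|j| \le N$, I want $k h_0$ to agree with some $\varphi^{j'}$ with $|j'| > N$. Since $S$ is covered by the clopen sets $S_j = \{x \in S : h_0(x) = \varphi^j(x)\}$ for $|j| \le$ some bound, and only finitely many are nonempty, I can build $k \in \left[[\varphi]\right]$ supported in $U$ (using again Theorem \ref{GlasnerWeiss} and room inside $U$ provided by the tower) so that $k$ pushes each $h_0(S_j)$ forward along the $\varphi$-orbit by a large amount, making the resulting cocycle values uniformly exceed $N$ in absolute value while keeping $k h_0$ supported in $U$ and an involution-free adjustment away from $\left[\varphi\right]$. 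Set $h = k h_0$; then $n(h, S) > N$, and by Lemma \ref{rank_invariant_topfullgroup} (applied on the left) $\rho(h) = \rho(k h_0) \sim \rho(h_0) = \alpha$, hence $\rho(h) \gtrsim \alpha$.

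The main obstacle I anticipate is the bookkeeping in the last step: ensuring that the "stretching" element $k$ can genuinely be found inside $\left[[\varphi]\right]$ with support in $U$ while raising all small cocycle values above $N$ simultaneously, without creating new points where $h = \varphi^{\pm j}$ for small $j$. This requires that $U$ contain enough disjoint translates $\varphi^i(W)$ of small clopen sets to route the relevant pieces through — which is precisely where minimality and the Glasner–Weiss lemma do the work, but the combinatorics of choosing the shifts consistently (so that $k$ is a well-defined element of the topological full group and its composition with $h_0$ behaves as claimed) is the delicate part. One also has to double-check that enlarging the cocycle does not accidentally shrink the rank more than by a finite amount; this is guaranteed by Lemma \ref{rank_invariant_topfullgroup}, which is exactly why that lemma was isolated beforehand.
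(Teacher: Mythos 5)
There is a genuine gap at the localization step. You propose to find a Kakutani--Rokhlin tower inside $U$ and then conjugate $g$ by some $\theta\in\left[\varphi\right]$ that ``compresses $X$ into'' the tower, obtaining $h_0=\theta g\theta^{-1}$ with support in $U$. But $\theta$ is a homeomorphism of $X$, so $\theta(X)=X$; no homeomorphism maps $X$ onto a proper clopen subset. The support of $\theta g\theta^{-1}$ is $\theta(\mathrm{supp}\,g)$, and if $g$ has support close to all of $X$ (which is the generic situation) this cannot be squeezed into $U$ by conjugation. A secondary issue: your justification that $\rho$ is conjugation-invariant --- ``$T_{\theta g\theta^{-1}}$ is obtained from $T_g$ by applying $\theta$'' --- is incorrect, because the tree $T_g$ is built by comparing $g$ with \emph{powers of $\varphi$}, and conjugating $g$ does not correspondingly conjugate $\varphi$; the only reason conjugation by $\theta\in\left[[\varphi]\right]$ changes $\rho$ by at most a finite amount is Lemma~\ref{rank_invariant_topfullgroup} itself, applied twice.

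The idea you are missing, and which is the real content of the paper's proof, is a \emph{concentration} argument: one shows that the rank of $g$ is already achieved on some small clopen set, and only that piece is transported into $U$. Concretely, the paper picks a small clopen $\tilde U\subseteq U$ with $(2N+2)\mu(\tilde U)<\mu(U)$ for all invariant $\mu$, covers $X$ by finitely many translates $U_j=\varphi^{i_j}(\tilde U)$, and defines subtrees $T_j\subseteq T_g$ consisting of those branches $(V_0,\dots,V_n)$ with $V_0\subseteq U_j$. A transfinite induction shows $\rho(T_g)=\max_j\rho(T_j)$, so some $U_j$ carries the full rank $\alpha$. Only then does one invoke Glasner--Weiss: an $f\in\left[[\varphi]\right]$ moves $U_j$ to a clopen $W\subseteq U$, and a second application (here is where the measure estimate on $\tilde U$ is used) produces $k\in\left[[\varphi]\right]$ sending $g(U_j)$ into $U$ disjointly from $\bigcup_{i=-N}^N\varphi^i(W)$. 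The element $h$ is then defined to be $kgf^{-1}$ on $W$, its inverse on the image, and the identity elsewhere; this makes $h$ an involution with support $S=W\cup h(W)\subseteq U$, with $n(h,S)>N$ by construction, and $\rho(h)\gtrsim\alpha$ by Lemma~\ref{rank_invariant_topfullgroup}. Without the concentration step $\rho(T_g)=\max_j\rho(T_j)$, there is no way to cut $g$ down to a piece of $X$ that fits inside $U$ while controlling the rank, and your proposal does not supply a substitute for it.
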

\begin{proof}
Pick $g \in \left[ \varphi \right]$ such that $\rho(g)=\alpha$ is infinite and fix a nonempty clopen $U \subseteq X$ and an integer $N$. Using compactness of the space of $\varphi$-invariant probability measures and the fact that they are all atomless, one can find a nonempty clopen $\tilde U \subseteq U$ such that $(2N+2) \mu(\tilde U) < \mu(U)$ for any $\varphi$-invariant measure $\mu$. 

Since $\varphi$ is minimal, there exist $i_1,\ldots,i_n$ such that 
$X= \bigcup_{j=1}^n \varphi^{i_j}(\tilde U)$. For all $j \in \{1,\ldots,n\}$, denote $U_j= \varphi^{i_j}(\tilde U)$, and consider the tree $T_j$ defined by
$$(V_0,\ldots,V_n) \in T_j \Leftrightarrow (V_0,\ldots,V_n) \in T_g \text{ and } V_0 \subseteq U_j \ .$$
Denote by $\rho_j$ the rank function associated to the well-founded tree $T_j$, and by $\rho(T_j)$ the rank of $T_j$. 
For any finite sequence $(V_0,\ldots,V_k)$ of clopen subsets of $X$, we have 
$$ \left( \forall j \in \{1,\ldots,n\} \ \rho_j(V_0 \cap U_j,\ldots,V_k \cap U_j)=0 \right) \Rightarrow \rho_g(V_0,\ldots,V_k)=0\ .$$
From this, we see by transfinite induction that 
$\rho(g)=\max \{\rho(T_j) \colon j \in \{1,\ldots,n\}\}$, so there exists $j$ such that $\rho(T_j)=\alpha$. Fix such a $j$; any element of $\left[\varphi \right]$ coinciding with $g$ on $U_j$ must have rank larger than $\alpha$. 

Applying the Glasner--Weiss result recalled as Theorem \ref{GlasnerWeiss}, we can find $f \in \left[[ \varphi \right ]]$ such that $f(U_j)=W \subseteq U$. We also have 
$$\mu(g(U_j)) < \mu\left(U \setminus \bigcup_{i=-N}^N  \varphi^i(W) \right) $$
for any $\varphi$-invariant $\mu$, so applying Theorem \ref{GlasnerWeiss} again we can find $k \in \left[[\varphi \right]]$ such that $k(g(U_j))$ is contained in $U$ and disjoint from $\bigcup_{i=-N}^N  \varphi^i(W)$.
Now, let $h$ be equal to $kgf^{-1}$ on $W$, to $f g^{-1}k^{-1}$ on $kgf^{-1}(W)$, and to the identity elsewhere. We set $S= W \cup h(W)$. 
Using the fact that $f,k$ belong to $\left[[ \varphi \right]]$ and Lemma \ref{rank_invariant_topfullgroup}, we see that $\rho(h) \gtrsim \rho(g)$. The construction ensures that $h(W)$ is disjoint from $\bigcup_{i=-N}^N  \varphi^i(W)$, so $n(h,W) >N$; since $h$ is an involution, $n(h,h(W))= n(h,W)$ is also strictly larger than $N$. This ensures that $n(h,S)>N$ and all the desired conditions are satisfied.

\end{proof}
 
 \begin{theorem}
The full group of a minimal homeomorphism of a Cantor space $X$ is a coanalytic non-Borel subset of $\Homeo(X)$.
 \end{theorem}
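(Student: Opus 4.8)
\emph{Proof idea.} The plan is to show that were $\left[\varphi\right]$ Borel, the coanalytic ranks $\rho(g)$ would be bounded below $\omega_1$, and then to contradict this by producing elements of $\left[\varphi\right]$ of arbitrarily large rank. The map $g\mapsto T_g$ is Borel and $g\in\left[\varphi\right]$ iff $T_g$ is well-founded, so if $\left[\varphi\right]$ were Borel then $\mathcal T_\varphi=\{T_g:g\in\left[\varphi\right]\}$ would be an analytic subset of the space $\mathcal T$ of trees on $\Clop(X)$, consisting entirely of well-founded trees, and the boundedness theorem for coanalytic ranks (\cite{Kechris1995}*{Theorem 35.23}) would provide a countable ordinal $\alpha_0$ with $\rho(g)<\alpha_0$ for every $g\in\left[\varphi\right]$. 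Hence it suffices to produce, for every countable ordinal $\alpha$, an element $g\in\left[\varphi\right]$ with $\rho(g)\ge\alpha$ (taking $\alpha=\alpha_0+\omega$ then contradicts the existence of $\alpha_0$).

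I would obtain such elements by a transfinite construction that, at stage $\alpha$, yields $g_\alpha\in\left[\varphi\right]$ with $\rho(g_\alpha)\ge\alpha$ while carrying along the two auxiliary features of Lemma~\ref{gluing_lemma}: $g_\alpha$ may be taken supported in any prescribed nonempty clopen set, and with $n(g_\alpha,\operatorname{supp}g_\alpha)$ exceeding any prescribed integer — it is exactly these that make the induction self-perpetuating. For $\alpha$ finite this is elementary: by Theorem~\ref{GlasnerWeiss} and the involution remark one finds, inside any clopen $U$, an involution carrying a small clopen $W$ onto a clopen subset of $U$ disjoint from $\varphi^i(W)$ for $|i|\le\max(\alpha,N)$; such an element moves every point of its support by a $\varphi$-power of absolute value exceeding $\max(\alpha,N)$, so the first $\alpha$ levels of its tree are unconstrained and its rank is at least $\alpha$. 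The successor step, where one must raise the rank by exactly one while keeping the clopen-support and $n$-conditions, is a routine local modification along $\varphi$-orbits inserting one extra unconstrained level on top of a (relocated) copy of the tree built at the previous stage.

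The limit case is the heart of the argument and uses Lemma~\ref{gluing_lemma} as an engine. Given $\alpha$ limit and a prescribed clopen $U$, write $U=\bigsqcup_kU_k$ with the $U_k$ pairwise disjoint nonempty clopen of diameters tending to $0$, fix $\alpha_k\nearrow\alpha$, and use the construction already available for $\alpha_k$ — with support inside $U_k$ and $n$-parameter at least $\max(N,k)$ — to obtain $h_k$ with $S_k:=\operatorname{supp}h_k\subseteq U_k$, $\rho(h_k)\ge\alpha_k$ and $n(h_k,S_k)>\max(N,k)$; set $g=\prod_kh_k$. The vanishing diameters make $g$ a homeomorphism, $g$ lies in $\left[\varphi\right]$ since it moves each point by a power of $\varphi$, and $n(g,\operatorname{supp}g)=\min_kn(h_k,S_k)>N$. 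For the rank, one uses that the $S_k$ are pairwise disjoint and that $g$ agrees with $h_k$ on $S_k$: the subtree of $T_g$ hanging below any node whose initial entry is a clopen subset of $S_k$ coincides with the corresponding subtree of $T_{h_k}$, so $\rho(g)=\rho_g(\emptyset)+1\ge\rho_{h_k}(\emptyset)+1=\rho(h_k)\ge\alpha_k$ for every $k$, whence $\rho(g)\ge\sup_k\alpha_k=\alpha$.

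The step I expect to be the main obstacle is precisely this rank bookkeeping at limit stages: one must be certain that the superposition $g=\prod_kh_k$ creates no infinite branch in $T_g$ — prevented, together with continuity of $g$, by the shrinking diameters — and, above all, loses no rank; it is the combination of the disjointness of the supports $S_k$ with the control $n(h_k,S_k)>k$ (which ensures the genuine structure of $T_{h_k}$ sits undisturbed below a long unconstrained initial segment, so that the "$\gtrsim$'' produced by Lemma~\ref{gluing_lemma} can be taken to be a genuine "$\ge$'') that rules this out. Once elements of $\left[\varphi\right]$ of unbounded rank are in hand, the boundedness theorem is violated, so $\left[\varphi\right]$ is not Borel; together with the observation that $\left[\varphi\right]$ is always coanalytic inside $\Homeo(X)$, this completes the proof.
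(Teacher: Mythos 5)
Your high-level plan coincides with the paper's: show $g\mapsto T_g$ is Borel with $g\in\left[\varphi\right]$ iff $T_g$ well-founded, invoke boundedness for coanalytic ranks, and then defeat boundedness by exhibiting elements of $\left[\varphi\right]$ of unbounded rank. The limit step you describe is actually sound: with disjoint supports $S_k\subseteq U_k$ of shrinking diameter, the subtree of $T_g$ below any node $(U_0)$ with $U_0\subseteq S_k$ coincides with the corresponding subtree of $T_{h_k}$, so $\rho(T_g)\geq\rho(T_{h_k})$, and shrinking diameters give continuity and rule out infinite branches. You flag this as the main obstacle, but it is the least problematic part of your argument.

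The genuine gap is the \emph{successor} step, which you dismiss as ``a routine local modification along $\varphi$-orbits inserting one extra unconstrained level on top of a (relocated) copy of the tree built at the previous stage.'' There is no such routine modification. The levels of $T_g$ are indexed by the power of $\varphi$, so ``prefixing a level'' would mean shifting the cocycle $x\mapsto n_x$ of $g$ uniformly by one, but composing $g$ with an element of $\left[[\varphi]\right]$ (the only localized tool available) only controls the rank up to a finite error $\sim$ by Lemma~\ref{rank_invariant_topfullgroup}; the same is true of Lemma~\ref{gluing_lemma}. Without a genuine $\alpha\mapsto\alpha+1$ step you cannot reach the ordinals $\lambda+n$ ($\lambda$ limit, $n>0$), and therefore your limit step cannot produce $\lambda+\omega$ either, since any sequence $\alpha_k\nearrow\lambda+\omega$ must eventually consist of such successors. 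The $\gtrsim$ losses from Lemma~\ref{gluing_lemma} cannot be ``taken to be a genuine $\ge$'' as you assert: the finite discrepancy depends on the elements of the topological full group used to relocate, and need not be bounded across the sequence $k$.

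The paper sidesteps this issue entirely by performing a single jump $\alpha\mapsto\alpha+\omega$ (for $\alpha$ limit), which absorbs all finite losses. The crucial device is to fix one $g\in\left[\varphi\right]\setminus\left[[\varphi]\right]$ and a disjoint family $(V_i)$ of clopen sets that are last entries of terminal nodes of $T_g$ of unbounded depth $N_i$; given $f$ of rank $\ge\alpha$, one relocates copies $h_i$ of $f$ inside $W_i\subseteq V_i$ (with $n(h_i,S_i)>2N_i+1$) and sets $h=gh_i$ on $W_i$, $h=g$ elsewhere. A terminal node of $T_g$ of depth $N_i$ then extends inside $T_h$ by a shifted copy of $T_{h_i}$, yielding $\rho(h)\geq\alpha+N_i$ for every $i$, hence $\rho(h)\geq\alpha+\omega$. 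Combined with the limit step this closes the transfinite induction through all countable limit ordinals, with the finite $\sim$-losses never accumulating. You should replace your successor step with this $\alpha+\omega$ argument; your disjoint-support limit step can be retained, but the $\alpha_k$ must then be taken to be limit ordinals so that $\gtrsim$ automatically upgrades to $\ge$.
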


 \begin{proof}
Let $\varphi$ be a minimal homeomorphism of a Cantor space $X$. We explain how to produce elements of $\left[\varphi\right]$ with arbitrarily large rank.
To that end, we fix for the remainder of the proof a compatible distance on $X$, an element $g$ of $\left[\varphi\right]$ and a countable family $(V_i)$ of nonempty disjoint clopen subets of $X$
with the following property: the tree generated by terminal nodes $(U_0,\ldots,U_n)$  of $T_g$ such that $U_n=V_i$ for some $i$ has rank at least $\omega$. Note that the value $n$ associated to such a terminal node is determined by $i$: we must have $g=\varphi^{\pm (n+1)}$ on $V_i$. We note $n=N_i$, and our hyphotesis is that $(N_i)$ is unbounded.

We then pick an infinite sequence $(W_i)_{i< \omega}$ of nonempty clopen subsets of $X$ such that the diameter of each $W_i$ is less than $2^{-i}$ and $W_i \subseteq V_i$ for all $i$.
Now, let $g_i$ be any sequence of elements of $\left[\varphi\right]$ of infinite rank; using Lemma \ref{gluing_lemma}, we can find elements $h_i$ of $\left[\varphi\right]$ with support $S_i$ contained in $W_i$ and such that $\rho(h_i) \gtrsim \rho(g_i)$. We shall also ask that $n(h_i,S_i)>2N_i+1$. We then define $h \colon X \to X$ by setting 
$$h(x)=  \begin{cases} g h_i(x) &\text{ if } x \text{ belongs to some } W_i \\
g(x) & \text{ otherwise} \end{cases}\ . $$
Note that, since the sets $W_i$ 
 are pairwise disjoint, $h$ is well-defined. We next show that $h$ is continuous. Let $(x_i)$ be a sequence of elements of $X$ converging to some $x \in X$. If $x$ belongs to $W_j$ for some $j$ then $x_i \in W_j$ for $i$ large enough and continuity of $g$ and $h_j$ ensure that $h(x_i)$ converges to $h(x)$. So we may assume that $x$ does not belong to $\cup W_j$; in that case $h(x)=g(x)$ and since $g$ is continuous we may also assume that $x_i$ belongs to some $ W_{j_i}$ for all $i$. Each $W_i$ is clopen, so we must have $j_i \to +\infty$, hence the diameter of $W_{j_i}$ converges to $0$. Since $g$ is uniformly continuous, the diameter of $g(W_{j_i})$ also converges to $0$; $h(x_i)$ and $g(x_i)$ both belong to this set, showing that $d(g(x_i),h(x_i))$ converges to $0$. Hence $h(x_i)$ converges to $g(x)=h(x)$, proving that $h$ is continuous. The construction also ensures that $h$ is bijective, so $h$ is a homeomorphism of $X$, and $h$ belongs to $\left[\varphi\right]$. The definition of $h$ and the argument of Lemma \ref{rank_invariant_topfullgroup} (using the fact that $g= \varphi^{\pm (N_i+1)}$ on $W_i$)
 ensure that $\rho(h) \gtrsim \rho(h_i)$ for all $i$. If we have $\rho(g_i)\geq\alpha_i$ for limit ordinals $\alpha_i$, we then obtain $\rho(h) \ge \sup \alpha_i$.

Now let $\alpha$ be a countable limit ordinal, and $f$ an element of $\left[ \varphi\right ]$ such that $\rho(f) \ge \alpha$; we now explain how to produce an element of $\left[ \varphi\right ]$ with rank greater than $\alpha+ \omega$, which will conclude the proof. The element in question, again denoted by $h$, is obtained by applying the construction above with $g_i=f$ for all $i$. 
Let $(U_0,\dots,U_n)$, $U_0=U_n=V_i$, be a terminal node of $T_g$, so in particular $g=\varphi^{\pm (n+1)}$ on $U_0$, $n=N_i$. Since $n(h_i,S_i)>2n+1$, the only way to have $gh_i(x)=\varphi^m(x)$ for $x\in U_0$ is with $|m|>n$, and this says that $(U_0,\dots,U_n)$ belongs to $T_{gh_i}$ (and to $T_h$) as well. Once we know this, the argument of Lemma \ref{rank_invariant_topfullgroup} implies that $\rho_h(U_0,\dots,U_n)\gtrsim\rho(h_i)\geq\alpha$: if $(U'_0,\dots,U'_k)\in T_{h_i}$, $U'_0\subseteq U_n$, $2n+1<k$, then $(U_0,\dots,U_n,U'_{2n+2},\dots,U'_k)\in T_h$. We conclude that $\rho(h)\geq\alpha+n=\alpha+N_i$. Since this is true for every $i$, we get $\rho(h)\geq\alpha+\omega$, as expected.
\end{proof}

\begin{remark}
Given the result we just proved, it seems likely that the full group of an equivalence relation induced by a minimal action of a countable group $\Gamma$ on a Cantor space is never Borel. The above argument may be adapted in large part, but it is not clear to the authors how one can modify Lemma \ref{gluing_lemma} in a context where Theorem \ref{GlasnerWeiss} does not hold. 

One can nevertheless note that the above result extends to relations induced by actions of $\Z^d$ for all integers $d$, though this extension of the result is not really meaningful, indeed it is trivial once one knows that full groups associated to minimal $\Z^d$-actions are the same as full groups associated to minimal $\Z$-actions, a powerful result proved in \cite{Giordano2010}.
\end{remark}

\section{Closures of full groups.}
We saw above that there does not exist a Hausdorff, Baire group topology on the full group of a minimal homeomorphism $\varphi$ of a Cantor space $X$. This precludes the usage of Baire category methods; however, the same cannot be said of the closure of $\left[\varphi \right]$, which is of course a Polish group since it is a closed subgroup of $\Homeo(X)$ (and the arguments of Section \ref{topology} show that the topology induced by that of $\Homeo(X)$ is the unique Polish topology on the closure of $\left[\varphi \right]$ which is compatible with the group operations). As pointed out in \cite{Giordano1999}, the closure of $\left[\varphi \right]$ is easy to describe thanks to Theorem \ref{GlasnerWeiss}: letting $\mathcal M_{\varphi}$ denote the (compact, convex) set of all $\varphi$-invariant probability measures, we have
$$\overline{\left[\varphi \right]} =\{g \in \Homeo(X) \colon \forall \mu \in \mathcal M_{\varphi} \ g_* \mu=\mu \} \ . $$

\emph{Notation.} Below we denote the closure of the full group of $\varphi$ in $\Homeo(X)$ by~$G_{\varphi}$.
\medskip

This group is relevant when studying topological orbit equivalence of minimal homeomorphisms, because of a theorem of Giordano--Putnam--Skau which implies the following result.

\begin{prop} \label{closurefullgroup}
Let $\varphi_1$, $\varphi_2$ be two minimal homeomorphisms of a Cantor space $X$; assume that $G_{\varphi_1}$ and $G_{\varphi_2}$ are isomorphic (as abstract groups). Then $\varphi_1$ and $\varphi_2$ are orbit equivalent.
\end{prop}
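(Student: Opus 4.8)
The plan is to recover the orbit equivalence of $\varphi_1$ and $\varphi_2$ by showing that an abstract isomorphism $\Phi \colon G_{\varphi_1} \to G_{\varphi_2}$ must be spatial, i.e.\ implemented by a homeomorphism $g$ of $X$, and then to check that such a $g$ conjugates the closures of the \emph{full groups} in a way that forces $g(R_{\varphi_1}) = R_{\varphi_2}$. The first step is to reconstruct the action of $G_{\varphi_i}$ on $X$ (equivalently, on $\Clop(X)$) purely group-theoretically from $G_{\varphi_i}$. Since $[\varphi_i] \subseteq G_{\varphi_i} \subseteq \Homeo(X)$, the group $G_{\varphi_i}$ still acts with the property that the restriction of $R_{G_{\varphi_i}}$ to any nonempty open set is nontrivial (Theorem \ref{GlasnerWeiss} already gives nontrivial involutions supported in any prescribed clopen set), so the hypotheses of the Giordano--Putnam--Skau/Medynets reconstruction Theorem (the second boxed theorem of the Background section, applied to the groups $G_{\varphi_i}$ viewed as acting on $X$) should apply and yield a homeomorphism $g$ with $\Phi(h) = g h g^{-1}$ for all $h \in G_{\varphi_1}$. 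Concretely one recovers the clopen algebra as a suitable lattice of ``supports'' of involutions, exactly as in Lemmas \ref{AlgCar1}--\ref{AlgCar2} above, and recovers points as appropriate ultrafilters of that lattice; the upshot is that $\Phi$ is conjugation by some $g \in \Homeo(X)$.

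Next I would identify, inside the abstract group $G_{\varphi_i}$, the subgroup $[\varphi_i]$ in a purely algebraic way, so that $g [\varphi_1] g^{-1} = [\varphi_2]$ follows from $\Phi([\varphi_1]) = [\varphi_2]$. The natural candidate is that $[\varphi_i]$ is the set of elements of $G_{\varphi_i}$ all of whose orbits on $X$ are contained in a single $R_{\varphi_i}$-class; but ``orbit'' and ``$R_{\varphi_i}$-class'' must themselves be expressed group-theoretically. Since conjugation by $g$ already transports the recovered Boolean algebra $\Clop(X)$ and the recovered points, it transports the recovered ``germ'' structure as well, so it is enough to know that $[\varphi_i]$ is $\operatorname{Aut}$-invariant enough — e.g.\ that it is the smallest subgroup of $G_{\varphi_i}$ that is dense (in the unique Polish topology on $G_{\varphi_i}$, which by Section \ref{topology}'s uniqueness remark is canonical) and full in the sense of being closed under the ``cut and paste'' operation along recovered clopen partitions. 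Then $g [\varphi_1] g^{-1} = [\varphi_2]$, which by definition of the full group means precisely $g(x) R_{\varphi_2} g(y) \iff x R_{\varphi_1} y$ on a dense set, hence everywhere by continuity, i.e.\ $\varphi_1$ and $\varphi_2$ are orbit equivalent. Alternatively, and perhaps more cleanly, once $g$ is produced one invokes the Giordano--Putnam--Skau theorem in the form already used in \cite{Giordano1999}: $g$ conjugates $G_{\varphi_1}$ to $G_{\varphi_2}$, and by Glasner--Weiss $G_{\varphi_i} = \{h : h_*\mu = \mu \text{ for all } \mu \in \mathcal M_{\varphi_i}\}$, so $g_* \mathcal M_{\varphi_1} = \mathcal M_{\varphi_2}$; combined with the GPS characterization of orbit equivalence of minimal Cantor systems in terms of (the ordered cohomology, or equivalently) the invariant measures together with the orbit structure, one concludes. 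I would write it the first way to keep the argument self-contained.

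The main obstacle I anticipate is the reconstruction step: the cited reconstruction theorem is stated for full groups of countable group actions, whereas $G_{\varphi_i}$ is an uncountable Polish group, so one must check that the proof of that theorem really only uses the availability of plentiful involutions supported in prescribed clopen sets (which $G_{\varphi_i}$ has, by Theorem \ref{GlasnerWeiss} and its Remark) and the algebraic characterizations of ``$g$ is supported in $U$'' and ``$g(U) = U$'' of the type proved in Lemmas \ref{AlgCar1}--\ref{AlgCar2} — none of which needs countability. A secondary subtlety is pinning down $[\varphi_i]$ inside $G_{\varphi_i}$ algebraically; if that proves awkward one falls back on the invariant-measure route of the previous paragraph, at the cost of citing more of the Giordano--Putnam--Skau machinery as a black box. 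Everything else (continuity of $g$, transport of the Boolean algebra, passing from a dense set of points to all of $X$) is routine compactness and genericity bookkeeping of the kind already carried out in Section \ref{topology}.
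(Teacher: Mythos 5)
Your ``alternative'' route is precisely the paper's proof: reconstruction gives $h\in\Homeo(X)$ with $\Phi(g)=hgh^{-1}$; since $G_{\varphi_i}=\{g\in\Homeo(X):g_*\mu=\mu\text{ for all }\mu\in\mathcal M_{\varphi_i}\}$ by Glasner--Weiss, conjugation by $h$ gives $h_*\mathcal M_{\varphi_1}=\mathcal M_{\varphi_2}$; and Theorem~2.2(iii) of \cite{Giordano1995} then says that the existence of such an $h$ forces $\varphi_1$ and $\varphi_2$ to be orbit equivalent. That last citation is the deep, $\Z$-specific input; the paper does not (and cannot) avoid it.

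The ``first way'' you say you would actually write down has a genuine gap, in two respects. First, the proposed algebraic description of $[\varphi]$ inside $G_\varphi$ --- smallest dense subgroup closed under cut-and-paste along clopen partitions --- does not single out $[\varphi]$: the topological full group $[[\varphi]]$ is closed under cut-and-paste by its very definition, is dense in $G_\varphi$ after replacing $\varphi$ by an orbit-equivalent homeomorphism (exactly the move made in the proof of Theorem~\ref{HrushovskiStrong}), and is a proper countable subgroup of $[\varphi]$. Second, and more fundamentally, the reconstruction homeomorphism $h$ has no reason to carry $[\varphi_1]$ onto $[\varphi_2]$: if $\varphi_1,\varphi_2$ are minimal with $\mathcal M_{\varphi_1}=\mathcal M_{\varphi_2}$, then $G_{\varphi_1}=G_{\varphi_2}$ and $h=\mathrm{id}$ is a valid reconstruction homeomorphism, yet it need not send one full group to the other. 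The claim that such $\varphi_1,\varphi_2$ are \emph{nonetheless} orbit equivalent is exactly the content of \cite{Giordano1995}; it is not a formal consequence of the group isomorphism, because $G_\varphi$ only remembers $\mathcal M_\varphi$, not $R_\varphi$. So the measure route is not an optional shortcut --- it is the proof, and you should write that one.
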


 \begin{proof}
Assume that $\Phi \colon G_{\varphi_1} \to G_{\varphi_2}$ is a group isomorphism. First, the usual reconstruction techniques (see e.g.~\cite{Medynets2011}) show that there exists a homeomorphism $h \in \Homeo(X)$ such that $\Phi(g)=h g h^{-1}$ for all  $g \in G_{\varphi_1}$.

So we have that
$$\forall g \in \Homeo(X) \quad g \in G_{\varphi_1} \Leftrightarrow hgh^{-1} \in G_{\varphi_2} \ . $$
Since $\mathcal M_{\varphi_i}$ is equal to the set of measures which are invariant under translation by elements of $G_{\varphi_i}$ (for $i=1,2$), this means that 
$$\forall \mu \quad \mu \in \mathcal M_{\varphi_1} \Leftrightarrow h_* \mu \in \mathcal M_{\varphi_2}\ . $$

Then (\cite{Giordano1995}*{Theorem 2.2(iii)}) implies that $\varphi_1$ and $\varphi_2$ are orbit equivalent.\end{proof}

Of course, the converse of the above statement is true: if $\varphi_1$ and $\varphi_2$ are orbit equivalent, then their full groups are conjugated inside $\Homeo(X)$, so the closures of the full groups are also conjugated. However, the statement above is only valid a priori for actions of $\Z$: while it is true that for any minimal actions of countable groups an isomorphism between the closures of the respective full groups must be implemented by an homeomorphism of $X$, there is no reason why this homeomorphism would be sufficient to prove that the full groups themselves are isomorphic. Indeed, using ideas from ergodic theory, one can  see that there are plenty of examples of actions of countable groups $\Gamma_1$, $\Gamma_2$ on a Cantor space $X$ such that $\overline{\left[R_{\Gamma_1}\right]}=\overline{\left[R_{\Gamma_2}\right]}$, yet the two associated relations are not orbit equivalent. The example below was explained to us by D. Gaboriau. 

\begin{prop}
There exists an action of $\Z$ and an action of the free group $F_3$ on three generators on a Cantor space $X$, such that the closures of the full groups of the two actions coincide, yet the relations are not orbit equivalent.
\end{prop}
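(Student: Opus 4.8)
The plan is to build the two actions on a common Cantor space $X$ by first fixing the $\Z$-action and then wrapping an $F_3$-action around it. I would start from an arbitrary minimal uniquely ergodic homeomorphism $\varphi$ of $X$, with (unique, non-atomic) invariant measure $\mu$; by Theorem~\ref{GlasnerWeiss} we then have $G_\varphi=\overline{\left[\varphi\right]}=\{g\in\Homeo(X)\colon g_*\mu=\mu\}$, the closed group of all $\mu$-preserving homeomorphisms. Since $F_3$ is free, any choice of two homeomorphisms $\psi_2,\psi_3\in G_\varphi$, together with $\psi_1:=\varphi$, determines an action $\beta$ of $F_3$ on $X$, sending the $i$-th free generator $a_i$ to $\psi_i$. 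Such a $\beta$ automatically preserves $\mu$ (each generator does) and is minimal (every $\beta$-orbit contains a dense $\varphi$-orbit), so that $\mathcal M_\beta=\{\mu\}$, since any $\beta$-invariant measure is in particular $\varphi$-invariant. The one additional feature I need is that $\psi_2,\psi_3$ can be chosen so that $\beta$ is also \emph{$\mu$-essentially free}, that is, $\mu(\{x\colon\beta(w)(x)=x\})=0$ for every nontrivial reduced word $w\in F_3$; note that this already forces $\beta$ to be faithful.

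Granting such a choice, the two assertions of the proposition follow with little effort; write $R_\varphi$ and $R_\beta$ for the two orbit equivalence relations. For the coincidence of closures: the inclusion $\langle a_1\rangle\le F_3$ gives $\left[\varphi\right]=\left[R_\varphi\right]\subseteq\left[R_\beta\right]$, whence $\overline{\left[\varphi\right]}\subseteq\overline{\left[R_\beta\right]}$; conversely, every element of the full group of an action preserves the invariant measures of that action, so $\left[R_\beta\right]\subseteq G_\varphi$, and since $G_\varphi$ is closed in $\Homeo(X)$ this gives $\overline{\left[R_\beta\right]}\subseteq G_\varphi=\overline{\left[\varphi\right]}$; hence $\overline{\left[R_\beta\right]}=\overline{\left[\varphi\right]}$. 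For the failure of orbit equivalence: if $R_\varphi$ and $R_\beta$ were orbit equivalent via some $h\in\Homeo(X)$, then $h$ would be a Borel isomorphism between these two equivalence relations; but $R_\varphi$, as the orbit relation of a $\Z$-action, is Borel hyperfinite (Slaman--Steel, B.~Weiss), so $R_\beta$ would be Borel hyperfinite as well, hence $\mu$-amenable. On the other hand $\mu$ is $\beta$-invariant and $\beta$ is $\mu$-essentially free, so, after removing a $\mu$-null invariant Borel set, $R_\beta$ becomes the orbit relation of a free p.m.p.\ action of $F_3$; by Zimmer's theorem such a relation is $\mu$-amenable only if $F_3$ is amenable --- a contradiction. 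So $R_\varphi$ and $R_\beta$ are not orbit equivalent.

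The real content is thus the construction of $\psi_2,\psi_3\in G_\varphi$ making $\beta$ $\mu$-essentially free, and here I would run a Baire category argument in the Polish group $G_\varphi\times G_\varphi$. The key observation is that the map $h\mapsto\mu(\{x\colon h(x)=x\})$ is upper semicontinuous on $\Homeo(X)$: if $h_n\to h$ in $\Homeo(X)$ then $\limsup_n\{x\colon h_n(x)=x\}\subseteq\{x\colon h(x)=x\}$, so one concludes by Fatou's lemma. Consequently, for every nontrivial reduced word $w\in F_3$ and every integer $n\ge1$, the set $U_{w,n}=\{(\psi_2,\psi_3)\in G_\varphi^2\colon\mu(\{x\colon w(\varphi,\psi_2,\psi_3)(x)=x\})<1/n\}$ is open, being the preimage of an open set under a continuous evaluation map. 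If each $U_{w,n}$ is moreover dense, then $\bigcap_{w\ne e}\bigcap_{n\ge1}U_{w,n}$ is comeager, hence nonempty, and any of its elements is a pair $(\psi_2,\psi_3)$ for which every nontrivial word of the corresponding action has $\mu$-null fixed-point set --- exactly what is required.

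So the crux, and the step I expect to be the main obstacle, is the density of each $U_{w,n}$: given finitely many clopen constraints on $\psi_2$ and $\psi_3$, and given $\varepsilon>0$, I must perturb $\psi_2$ and $\psi_3$ inside $G_\varphi$, respecting the constraints, so that $w(\varphi,\psi_2,\psi_3)$ has fixed-point set of $\mu$-measure $<\varepsilon$. For this I would exploit the flexibility of $G_\varphi$ supplied by Theorem~\ref{GlasnerWeiss} --- the values $\mu(A)$, $A\in\Clop(X)$, are dense in $[0,1]$, and $G_\varphi$ relocates clopen sets of equal measure essentially at will --- to argue that, away from the finitely many clopen pieces on which the generators are already pinned down, there is room to redefine them so that $w(\varphi,\psi_2,\psi_3)$ coincides, on a clopen set of measure $>1-\varepsilon$, with a fixed-point-free homeomorphism (for instance a suitable power of $\varphi$, or an element of $\left[[\varphi]\right]$ built via Theorem~\ref{GlasnerWeiss}). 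Turning this heuristic into a genuine perturbation lemma --- keeping track of the word structure and of the constraint that one remains in $G_\varphi$ --- is the technical heart of the proof; the remaining ingredients are soft. (One could instead simply invoke the known existence of free minimal uniquely ergodic actions of $F_3$ on the Cantor set realizing a prescribed invariant measure, but the argument above stays within the Baire-category methods of this paper.)
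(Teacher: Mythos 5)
Your overall architecture is sound, and the ``soft'' steps are in fact stated slightly more generally than in the paper. The sandwich $\left[\varphi\right]\subseteq\left[R_\beta\right]\subseteq G_\varphi$, combined with unique ergodicity and the fact that any full-group element preserves the invariant measures of its action, gives $\overline{\left[R_\beta\right]}=G_\varphi=\overline{\left[\varphi\right]}$; and the hyperfinite-versus-nonamenable argument correctly rules out orbit equivalence once $\beta$ is known to be $\mu$-essentially free. But the load-bearing step --- producing $\psi_2,\psi_3\in G_\varphi$ making the $F_3$-action $\beta$ generated by $(\varphi,\psi_2,\psi_3)$ $\mu$-essentially free --- is exactly what you leave as a heuristic, and you acknowledge this yourself. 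Openness of $U_{w,n}$ is fine (your semicontinuity argument works), but density is not: given clopen constraints on $\psi_2,\psi_3$ and a reduced word $w$ in which $a_2^{\pm1}$, say, appears several times, you must perturb $\psi_2$ coherently at \emph{every} occurrence while keeping $\psi_2\in G_\varphi$ and respecting the constraints, and you must track how the intermediate images of a clopen set under the partial word evaluation overlap, to prevent accidental recombination from creating a positive-measure fixed set. This is a genuine genericity lemma that needs a proof (or a citation), not a remark; as written, the proposal has a hole in its technical core. Your closing aside --- ``invoke the existence of a free minimal uniquely ergodic $F_3$-action realizing a prescribed measure'' --- also does not directly patch it, since you additionally need the restriction to the first generator to be minimal and uniquely ergodic, which is not automatic.

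The paper avoids all of this by a concrete construction: take $\varphi$ to be the odometer on $X=\{0,1\}^\omega$, identify $X$ with $\{0,1\}^{F_2}$, and let $F_2$ act by the Bernoulli shift; set $F_3=\Z\ast F_2$ acting through these two pieces. Both actions preserve the $(1/2,1/2)$-Bernoulli measure $\mu$, unique ergodicity of the odometer gives the coincidence of closures exactly as in your argument, and the Bernoulli shift is classically essentially free, so the subrelation of $R_{F_3}$ generated by $F_2$ alone is already a free p.m.p.\ $F_2$-relation, which precludes hyperfiniteness --- no essential freeness of the full $F_3$-action is required. If you want to stay within the Baire-category style you are aiming for, you would need to actually prove the density of the $U_{w,n}$ in $G_\varphi^2$; otherwise it is both shorter and safer to fix, as the paper does, an explicit choice of generators whose freeness is already known.
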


\begin{proof}
Let $\Z$ act on the Cantor space $\{0,1\}^{\omega}$ via the usual odometer map. Consider the free group $F_2$ on two generators acting by the Bernoulli shift on $\{0,1\}^{F_2}$; using a bijection between $\omega$ and $F_2$, one can see this as an action of $F_2$ on $\{0,1\}^{\omega}=X$. Let $F_3=F_2 \ast \Z$ act on $X$, where the action of $F_2$ is the Bernoulli shift and the action of $\Z$ is via the odometer map. Then the actions of $\Z$ and $F_3$ on $\{0,1\}^{\omega}$ both preserve the $(1/2,1/2)$-Bernoulli measure $\mu$ on $2^{\omega}$; since the odometer is uniquely ergodic, we see that for both actions the closure of the full group is equal to the set of all homeomorphisms which preserve $\mu$. Yet, there cannot even exist a $\mu$-preserving bijection $h$ of $X$  such that, for $\mu$-almost all $x,x' \in X$, one has 
$$(x R_\Z x') \Leftrightarrow (h(x) R_{F_3} h(x'))\ . $$
Indeed, the relation induced by the action of $\Z$ is hyperfinite, while the relation induced by the action of $F_3$ contains a subrelation which is induced by a free action of $F_2$, so it cannot be hyperfinite (see for instance \cite{Kechris2010} for information on probability-measure-preserving group actions and the properties we use here without details). Since a homeomorphism realizing an orbit equivalence between $R_\Z$ and $R_{F_3}$ would have to preserve $\mu$, we see that while the closures of both full groups coincide, the associated relations cannot be orbit equivalent.
\end{proof}

In view of this, the following question might be interesting.
\begin{question} Let $\Gamma_1$, $\Gamma_2$ be two countable \emph{amenable} groups acting minimally on a Cantor space $X$. Assume that the closures of the corresponding full groups are isomorphic as abstract groups. Must the two actions be orbit equivalent?
\end{question}

If one knew that any minimal action of a countable amenable group is orbit equivalent to a $\Z$-action then the answer to the question above would be positive; the result of \cite{Giordano2010} mentioned at the end of the previous section implies that the above question has a positive answer when $\Gamma_1,\Gamma_2$ are finitely generated free abelian groups.

We already pointed out that it is unknown whether the full group of a minimal homeomorphism $\varphi$ is simple. This reduces to deciding whether the full group coincides with its derived subgroup. Indeed, it is proved in \cite{Bezuglyi-Medynets2008}*{Theorem 3.4} that any normal subgroup of $\left[\varphi\right]$ contains its derived subgroup; the same is true for $G_\varphi$, as can be seen by following the proof of \cite{Bezuglyi-Medynets2008}.

Unfortunately, it seems to be hard in general to decide which elements of $\left[\varphi\right]$ are products of commutators (though one might conjecture that every element has this property; partial results in this direction can be found in \cite{Bezuglyi-Medynets2008}). The use of Baire category methods might make things simpler in the case of $G_{\varphi}$, especially in view of the following folklore result.

\begin{prop} \label{comeager_commutator}
Let $G$ be a Polish group; assume that $G$ has a comeager conjugacy class. Then every element of $G$ is a commutator.
\end{prop}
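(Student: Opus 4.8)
The plan is to use the comeager conjugacy class as a "target" that many elements can be conjugated into simultaneously. Let $g \in G$ be arbitrary; I want to write $g = [a,b] = aba^{-1}b^{-1}$ for some $a, b \in G$. The standard trick is as follows. Let $C$ be the comeager conjugacy class, and fix $c \in C$. Consider the two sets $A = \{x \in G : x \in C\} = C$ and $B = \{x \in G : gx \in C\}$. The first is comeager by hypothesis. The second is comeager too, since it is the preimage of $C$ under the homeomorphism $x \mapsto gx$, and homeomorphisms preserve comeagerness. By the Baire category theorem (valid in the Polish group $G$), $A \cap B$ is comeager, hence nonempty; pick $x \in A \cap B$. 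Then $x \in C$ and $gx \in C$, so $x$ and $gx$ are conjugate: there is $a \in G$ with $a x a^{-1} = gx$, i.e. $a x a^{-1} x^{-1} = g$. Setting $b = x^{-1}$ gives $g = a b^{-1} a^{-1} b = [a, b^{-1}]$, which exhibits $g$ as a commutator.

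In more detail, the key steps in order are: first, record that $G$ being Polish means the Baire category theorem holds in $G$; second, fix the comeager conjugacy class $C$ and an arbitrary $g \in G$; third, observe that $C$ and $g^{-1}C$ are both comeager (the latter because left translation by $g^{-1}$ is a homeomorphism of $G$), so their intersection is comeager and in particular nonempty; fourth, take $x$ in this intersection, so that $x$ and $gx$ lie in the same conjugacy class $C$, yielding $a \in G$ with $axa^{-1} = gx$; fifth, rearrange to $g = a x a^{-1} x^{-1}$, which is a commutator (of $a$ and $x$, up to the usual inverse conventions).

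I do not expect a genuine obstacle here: the argument is entirely formal once one has the Baire category theorem and the fact that conjugacy is an equivalence relation whose classes are exactly the orbits of the conjugation action. The only point requiring a word of care is the bookkeeping to confirm that $a x a^{-1} x^{-1}$ really is a commutator in whatever convention the paper uses — writing $g = [a, x]$ with $[a,x] := axa^{-1}x^{-1}$ makes this immediate — and, if one wants, remarking that this shows more: every element is in fact a \emph{single} commutator, not merely a product of commutators. No separability beyond what is built into "Polish" is needed; completeness is used only through the Baire category theorem.
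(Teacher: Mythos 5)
Your argument is correct and is essentially the same as the paper's: the paper picks $g_0 \in g\Omega \cap \Omega$ (nonempty since both are comeager), finds $k$ conjugating $g_0$ to $g^{-1}g_0$, and rearranges $gkg_0k^{-1}=g_0$ to $g=g_0kg_0^{-1}k^{-1}$, which is exactly your step with $x=g_0$, $a=k$ up to relabeling. The only cosmetic difference is that your final substitution $b=x^{-1}$ is unnecessary — you already had $g=[a,x]$ with the convention $[a,x]=axa^{-1}x^{-1}$.
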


\begin{proof}
Assume that $\Omega$ is a comeager conjugacy class in $G$, and let $g \in G$. The intersection $g \Omega \cap \Omega$ is nonempty; picking an element $g_0$ in this intersection, we see that there exists $k \in G$ such that $g k g_0 k^{-1}=g_0$, in other words $g= g_0 k g_0^{-1}k^{-1}$.
\end{proof}

It is thus interesting to understand when $G_{\varphi}$ has a comeager conjugacy class, even more so because of the following observation.

\begin{prop}
Let $\varphi$ be a minimal homeomorphism of a Cantor space $X$, and assume that $G_{\varphi}$ has a comeager conjugacy class. Then $G_{\varphi}$ has the \emph{automatic continuity property}, i.e.~any homomorphism from $G_{\varphi}$ to a separable topological group is continuous. 
\end{prop}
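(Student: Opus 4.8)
The plan is to deduce automatic continuity for $G_\varphi$ from a known criterion that is implied by the existence of a comeager conjugacy class, rather than proving everything from scratch. Specifically, I would invoke the framework of Kechris--Rosendal: a Polish group $G$ has the automatic continuity property as soon as it has ample generics, or more economically, as soon as it satisfies the (weaker) condition that there is a dense conjugacy class together with a ``Steinhaus-type'' property. Since only a single comeager conjugacy class is assumed here (not ample generics of all dimensions), the cleanest route is through the notion introduced by Rosendal--Solecki and refined by Kechris--Rosendal: if $G$ has a comeager conjugacy class $\Omega$, then $G$ is \emph{Steinhaus}, or more directly one shows $G$ has the \emph{property (OB)}-free analogue needed; but the sharpest available statement is that a comeager conjugacy class already yields automatic continuity \emph{provided} one can also verify a commutator-type condition. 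So the first thing I would do is pin down precisely which published statement applies verbatim.

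The key steps, in order, are as follows. First, recall Proposition~\ref{comeager_commutator}, which we just proved: a comeager conjugacy class forces every element of $G_\varphi$ to be a commutator; more is true by the same Baire-category argument, namely for any nonempty open $V \subseteq G_\varphi$ and any $g \in G_\varphi$, the set $gV \cap V^{-1} \cap \Omega$ considerations show $g$ lies in a short product of translates of $V$. Second, I would use this to verify that $G_\varphi$ satisfies the hypotheses of the automatic continuity criterion of Kechris--Rosendal \cite{Kechris2007} (or the Rosendal survey): concretely, one checks that $G_\varphi$ is ``Steinhaus'' in the sense that there is an integer $k$ such that for every symmetric syndetic (i.e.\ countably-many-translates-cover) set $W$, $W^k$ contains an open neighbourhood of the identity. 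The comeager conjugacy class is exactly what powers this: given such a $W$, by Baire category $W^2$ is non-meager, hence (using that $\Omega$ is comeager and invariant under conjugation) one traps an open set inside a bounded product of copies of $W$. Third, from the Steinhaus property the automatic continuity is immediate: if $\pi \colon G_\varphi \to H$ is a homomorphism into a separable group $H$, then preimages of a countable basis at $1_H$ are syndetic and symmetric up to products, so their bounded powers are open, forcing $\pi$ continuous at $1$, hence everywhere.

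The main obstacle I anticipate is the second step: extracting the Steinhaus property from a single comeager conjugacy class. Having a comeager conjugacy class is strictly weaker than having ample generics, and the automatic continuity machinery of Kechris--Rosendal is usually stated under the ample generics hypothesis. The point to verify carefully is that one comeager conjugacy class already suffices for the relevant Steinhaus estimate — this is true and essentially goes back to the observation (used in the proof of Proposition~\ref{comeager_commutator}) that if $\Omega$ is a comeager conjugacy class and $W$ is non-meager, then $\Omega \subseteq W W^{-1} W W^{-1}$ or some fixed bounded product, because every $g \in \Omega$ can be written using finitely many translates meeting the comeager set. I would write out this combinatorial lemma explicitly, as it is the crux, and then the rest follows formally. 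An alternative, if one wants to avoid re-deriving the Steinhaus estimate, is to cite directly the statement in \cite{Rosendal2009}*{or the relevant Kechris--Rosendal paper} that a Polish group with a comeager conjugacy class and in which every element is a commutator (which we have, by Proposition~\ref{comeager_commutator}) has automatic continuity; I would check which reference states this in the form we need and cite it, reducing the proof to ``apply Proposition~\ref{comeager_commutator} and invoke [reference].''
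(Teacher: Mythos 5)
The paper's proof is a one-liner: it says that the argument of Rosendal--Solecki for $\Homeo(2^\mathbb{N})$ (Theorem~12 of \cite{Rosendal2007}) adapts straightforwardly. You have correctly located the right reference and the right framework (the Steinhaus property), so the starting point is sound. However, your proposed reconstruction of how the comeager conjugacy class yields the Steinhaus estimate has a genuine gap, and the alternative you offer does not exist as a theorem.

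The gap is at the step ``given a countably syndetic symmetric $W$, by Baire category $W$ is non-meager, hence one traps an open set inside a bounded product of copies of $W$.'' The first half is fine (a countable union of meager translates cannot cover $G_\varphi$), but the second half implicitly invokes a Pettis-type argument, and Pettis requires $W$ to have the Baire property. A countably syndetic set coming as the preimage of a neighbourhood of $1_H$ under an arbitrary, a priori non-measurable homomorphism $\pi$ need \emph{not} have the Baire property --- that is precisely what makes automatic continuity nontrivial. Likewise, the ``combinatorial lemma'' you propose ($\Omega \subseteq WW^{-1}WW^{-1}$ for $\Omega$ a comeager conjugacy class and $W$ non-meager) is false without a Baire-property assumption on $W$: a non-meager set lacking the Baire property can be a Bernstein-type set, and no bounded power of it need contain an open set, let alone a comeager one. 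Your fallback --- ``cite the statement that a Polish group with a comeager conjugacy class in which every element is a commutator has automatic continuity'' --- is not a statement in the literature; Kechris--Rosendal prove automatic continuity under the strictly stronger hypothesis of \emph{ample} generics, and a single comeager conjugacy class is not known (and is not believed) to suffice in general.

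What the Rosendal--Solecki argument actually does, and what the paper means by ``adapts,'' is more hands-on. It uses the specific structure available in $\Homeo(2^\mathbb{N})$ (and equally in $G_\varphi$, via Theorem~\ref{GlasnerWeiss}): one can manufacture uncountably many involutions with pairwise disjoint small clopen supports, use the countable syndeticity of $W$ together with a diagonalization over those involutions to push some of them into a bounded power of $W$, and only \emph{then} bring in the comeager conjugacy class to conjugate everything into position. The comeager conjugacy class enters at the end of the argument, not as a substitute for the Baire property of $W$. To make your proposal correct you would need to replace the purported Pettis step with this structural lemma about sequences of involutions with disjoint clopen supports, verifying that it goes through for $G_\varphi$ using Glasner--Weiss to move clopen sets around inside $G_\varphi$.
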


 \begin{proof}
The argument in \cite{Rosendal2007}*{Theorem 12} adapts straightforwardly.
 \end{proof}

In the next section, we will discuss in more detail the problem of existence of comeager conjugacy classes in $G_\varphi$ in the particular case when $\varphi$ is uniquely ergodic, recovering in particular a result of Akin that provides many examples of this phenomenon.

In the first version of this article, we proved a weaker version of the result below, which worked only in the case when $n=1$ and $\varphi$ is a uniquely ergodic homeomorphism (see the next section); we are grateful to K. Medynets for pointing out to us the following stronger result.

\begin{theorem}[Grigorchuk--Medynets \cite{Grigorchuk2012}] \label{HrushovskiStrong}
Let $\varphi$ be a minimal homeomorphism of a Cantor space $X$. Then $\{(g_1,\ldots,g_n) \colon (g_1,\ldots,g_n) \text{ generates a finite group} \}$ is dense in $\left[\varphi\right]^n$ for all $n$. 
\end{theorem}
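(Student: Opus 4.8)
The plan is to use Krieger-type marker arguments combined with the Glasner--Weiss result (Theorem~\ref{GlasnerWeiss}) to approximate an arbitrary tuple $(g_1,\dots,g_n) \in [\varphi]^n$ by a tuple generating a finite group. First I would reduce to a single clopen basic neighborhood: given clopen partitions witnessing how close we want the approximation to be, it suffices to find $(g_1',\dots,g_n')$ generating a finite group and agreeing with $(g_1,\dots,g_n)$ on a prescribed large clopen set. The key tool is that for a minimal homeomorphism one has a Kakutani--Rokhlin partition structure: $X$ can be partitioned into finitely many clopen towers $\varphi^j(B)$, $0 \le j < h_t$, over clopen bases, and these can be taken to refine any given clopen partition and to have arbitrarily ``thin'' bases (in measure, using that all $\mu \in \mathcal M_\varphi$ are atomless and $\mathcal M_\varphi$ is compact).

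Second, the heart of the matter is to perturb each $g_i$ so that the joint action of $\langle g_1,\dots,g_n\rangle$ becomes finite. Each $g_i$, being in $[\varphi]$, has a cocycle $x \mapsto n_x^{(i)}$ with $g_i(x) = \varphi^{n_x^{(i)}}(x)$; on a large clopen set this cocycle is bounded (since $[[\varphi]]$ is dense in $[\varphi]$ by the earlier remarks, or directly by compactness), so up to a small perturbation we may assume all $g_i$ lie in $[[\varphi]]$ with cocycles bounded by some $M$. Now choose a Kakutani--Rokhlin partition fine enough that (a) it refines the clopen partition on which the cocycles are constant, and (b) each tower is much taller than $M$. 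Inside each tower, the $g_i$ move points up or down by at most $M$ levels, so the only ``problem'' is near the top and bottom of the towers and at the (small-measure) bases. Using Theorem~\ref{GlasnerWeiss} I would push the problematic boundary regions into a clopen set $E$ of small measure that is disjoint from the large clopen set we want to preserve, and then redefine each $g_i$ on and near $E$ so that the towers effectively ``wrap around'' — i.e.\ replace the infinite-orbit structure by a finite cyclic structure on each tower modulo the relevant height. Concretely: pick the tower heights $h_t$ all equal to a common multiple (or handle them with a single large $N$), declare each $g_i'$ to act on each tower by the same integer shifts as $g_i$ but computed modulo $N$ (and identity on $E$), so that $\langle g_1',\dots,g_n'\rangle$ is a quotient of $\mathbb{Z}_N^{\,k}$ acting — in particular finite — and $g_i' = g_i$ outside $E$.

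Third, I would verify that $E$ can indeed be chosen with measure small enough for all $\mu \in \mathcal M_\varphi$ simultaneously that the required clopen sets are preserved, invoking compactness of $\mathcal M_\varphi$ and atomlessness to make base levels and a bounded collar around them negligible, and invoking Theorem~\ref{GlasnerWeiss} to actually realize the needed rearrangements by elements of $[[\varphi]]$. One must also check the $g_i'$ are genuine homeomorphisms (clear, since they are defined piecewise on a clopen partition as powers of $\varphi$) and that they lie in $[\varphi]$ (immediate). Finally, density: since $n$ and the target neighborhood were arbitrary, and the perturbation affects only $E$, we conclude $\{(g_1,\dots,g_n) : \langle g_1,\dots,g_n\rangle \text{ finite}\}$ is dense in $[\varphi]^n$.

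The main obstacle I expect is organizing the ``wrap-around'' consistently across several generators at once: a single $g_i$ can be made periodic on a tower easily, but making $g_1,\dots,g_n$ \emph{jointly} generate a finite group requires that all their modular shift actions be compatible on the same partition, which forces the Kakutani--Rokhlin partition to be invariant (as a clopen partition, up to the collar $E$) under all the $g_i$ simultaneously — and that in turn is what consumes the measure budget and where Theorem~\ref{GlasnerWeiss} and the compactness of $\mathcal M_\varphi$ do the real work. (This is precisely the point at which the authors cite Grigorchuk--Medynets \cite{Grigorchuk2012}, who carry out this Hrushovski-type property for topological full groups; one could also phrase the whole argument as: $[[\varphi]]$ has the Hrushovski property, and then transfer it to $[\varphi]$ by density of $[[\varphi]]$.)
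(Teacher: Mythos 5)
Your proposal takes essentially the same route as the paper's proof: pass to the topological full group, take a Kakutani--Rokhlin partition with tall towers whose levels are small enough that the cocycles are constant on them, and modify the shifts near the tops and bottoms of the towers so the resulting homeomorphisms permute the clopen atoms of the partition and hence generate a finite group. Two details worth flagging: first, the reduction to $[[\varphi]]$ genuinely requires replacing $\varphi$ by an orbit-equivalent minimal homeomorphism whose topological full group is dense in $[\varphi]$ (your ``or directly by compactness'' alternative does not hold up --- the level sets $\{x : g(x)=\varphi^k(x)\}$ of the cocycle of a general $g\in[\varphi]$ are only closed, and there is no reason a clopen set of nearly full measure exists on which only finitely many of them meet); second, the tower heights in a K--R partition cannot in general be made equal, so the wrap-around must be carried out tower-by-tower rather than ``modulo a common $N$'' --- this is exactly what the paper's case analysis on $j_\gamma(n,i)+i$ accomplishes, and it also makes the joint case for $n$ generators nearly automatic since every $P_i$ becomes a partition-respecting permutation of the same finite set of atoms, so the paper simply treats $n=1$ and remarks the general case follows.
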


\begin{proof}
Since this statement is not explicitly written down in \cite{Grigorchuk2012} (though it is very close to Theorem 4.7 there), we describe the argument for the reader's convenience.
We simply prove that the set of elements of finite order is dense in $[\varphi]$; the proof of the general case is an easy consequence of this argument.

We may assume, replacing $\varphi$ by a minimal homeomorphism which is orbit equivalent to it (which does not affect the full group) that the topological full group $\left[[\varphi] \right]$ is dense in $\left[\varphi \right]$. This fact is pointed out in \cite{Bezuglyi2002}*{Theorem 1.6}, and follows from a combination of \cite{Glasner1995a}*{Theorem 2.2} and \cite{Giordano1999}*{Lemma 3.3}. Under this assumption, we only need to prove that the set of elements of finite order is dense in  $\left[[\varphi] \right]$. 

We fix $\gamma \in \left[[\varphi] \right]$. We let $D_i=\{x \colon \gamma(x)=\varphi^{i}(x)\}$, and 
$E_{k}=\{x \colon \gamma^{-1}(x)=\varphi^{k}(x)\}$ . The sets $D_i$ form a clopen partition of $X$, as do the sets $E_k$ ; we let $j_\gamma \colon X \to \Z$ (resp. $k_\gamma$) be the continuous function defined by $j_\gamma(x)=j$ iff $x \in D_j$ (resp. $k_\gamma(x)=k$ iff $x \in E_k$). We also pick $K$ such that $D_j=\emptyset=E_j$ for all $|j| > K$, fix a compatible distance $d$ on $X$ for the remainder on the proof, and let $\delta>0$ be such that $d(D_i,D_j) > \delta$ for all nonempty $D_i \ne D_j$, and $d(E_i,E_j) > \delta$ for all nonempty $E_i \ne E_j$.

Recall that a \emph{Kakutani--Rokhlin partition} associated to $\varphi$ is a clopen partition of $X$ of the form $\{\varphi^i (B_n) \colon 0 \le n \le N, 0 \le i \le h_{n}-1\}$. 
The \emph{base} of the partition is $B= \bigcup_{n=0}^N B_n$, while its \emph{top} is $T=\bigcup_{n=0}^N \varphi^{h_n-1} B_n$. Note that $\varphi(T)=B$. For all $i$, we set 
\begin{gather*}
Y_i= \bigcup_{n=0}^{N} \varphi^i (B_n)\quad \text{and}\quad  Z_i= \bigcup_{n=0}^{N} \varphi^{-i} (B_n)
\end{gather*}

K-R partitions exist because $T$ is minimal; actually, one can use minimality to ensure that the following conditions are satisfied (see for instance \cite{Grigorchuk2012} for a discussion on these partitions and references):
\begin{enumerate}
\item The functions $j_{\gamma}$ and $k_\gamma$ are constant on each atom of the partition.
\item $\min \{h_n \colon 0 \le n \le N\} \ge 2K+2$.
\item The diameter of each $Y_i$ and each $Z_i$ is less than $\delta$ for all $i \in \{0,\ldots K\}$ (this can be ensured because of the uniform continuity of $\varphi$ and $\varphi^{-1}$, and the fact that one can build K-R partitions whose base has arbitrarily small diameter).
\end{enumerate}
Fix such a partition. Note that the third condition ensures that $j_{\gamma}$ and $k_\gamma$ are constant on each $Y_i, Z_i$ ($|i| \le K$), and the second condition guarantees that the sets 
$(Y_i)_{0 \le i \le K}$, $(Z_i)_{1 \le i \le K}$ are pairwise disjoint. 
We now define $P \in \left[[ \varphi \right]]$ as follows. For all $n$, and all $i \in \{0,\ldots,h_{n-1}\}$, let $j_{\gamma}(n,i)$ be the value of $j_{\gamma}$ on $\varphi^{i}(B_n)=B_{n,i}$.
\begin{itemize}
\item If $0 \le j_{\gamma}(n,i)+i \le h_{n-1}$, then $P(x)= \gamma(x)$ for all $x \in B_{n,i}$.
\item If $j_{\gamma}(n,i)+i < 0$, then necessarily $i<K$, so $B_{n,i}=Y_i$, and since $j_{\gamma}$ is constant on $Y_i$ one has $\gamma(Y_i) \subseteq Z_l$ for some $1\leq l\leq K$. The inclusion must be an equality since $k_{\gamma}$ is constant on $Z_l$. Then set
$P(x)=\varphi^{-i+h_n-l }(x)$ for all $x \in B_{n,i}$.
\item If $j_{\gamma}(n,i)+i \ge h_n$, then one must similarly have $B_{n,i}=Z_j$ for some $1\leq j\leq K$, $\gamma(Z_j)=Y_l$ for some $0\leq l<K$, and one can set 
$P(x)= \varphi^{-i+l}(x)$ for all $x \in B_{n,i}$.
\end{itemize}

It is straightforward to check that $P$ has finite order; also, the fact that the diameter of each $Y_i$, $Z_i$ for $|i| \le K$ is small ensures that for all $x$ one has both $d(P(x),\gamma(x)) \le \delta$ and $d(P^{-1}(x),\gamma(x)) \le \delta$.
Thus $\gamma$ belongs to the closure of the set of elements of finite order, which concludes the proof.
\end{proof}

Actually, as pointed out by K. Medynets, this argument shows that $[\varphi]$ contains a dense locally finite subgroup (the group of all elements which preserve a positive semi-orbit 
; see the remarks in \cite{Grigorchuk2012}*{Section 5}). We will not need this fact so do not give any details.

Even though we do not know whether $G_{\varphi}$ or $\left[\varphi\right]$ are simple in general, we can use Theorem \ref{HrushovskiStrong} to prove that these groups do not have any non-trivial closed normal subgroups.

\begin{theorem}
Let $\varphi$ be a minimal homeomorphism of a Cantor space $X$. Then $G_{\varphi}$ and $\left[\varphi\right]$ are topologically simple.
\end{theorem}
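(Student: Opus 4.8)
The plan is to show that the closure of a nontrivial normal subgroup of $G_\varphi$ (resp. $[\varphi]$) is everything, using Theorem \ref{HrushovskiStrong} together with the Glasner--Weiss result (Theorem \ref{GlasnerWeiss}) and the commutator-absorption fact quoted just before the statement (that every normal subgroup of $[\varphi]$, and of $G_\varphi$, contains the derived subgroup). So let $N \trianglelefteq G_\varphi$ be closed and nontrivial, and pick $g \in N$, $g \ne \mathrm{id}$, and a nonempty clopen $U$ with $g(U) \cap U = \emptyset$. First I would produce, for any nonempty clopen $V$, an involution $\sigma \in [\varphi]$ supported in $U \cup g(U)$ with $\sigma(V') = V''$ where $V', V''$ are suitably small disjoint clopen sets inside $U$: the commutator $[\sigma, g] = \sigma g \sigma^{-1} g^{-1}$ then lies in $N$ and is a nontrivial element whose support we control. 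The point of conjugating by $g$ is that $g$ moves $U$ off itself, so the commutator acts like a "double" of $\sigma$ and in particular $[\sigma,g]$ restricted to $U$ is just $\sigma|_U$ while on $g(U)$ it is $g\sigma^{-1}g^{-1}$; hence $N$ contains a rich supply of involutions supported in arbitrarily small clopen sets.

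Next I would leverage the transitivity properties of $[\varphi]$ on clopen sets. Given any $h \in [\varphi]$ with $h^2 = 1$ and support $S = W \sqcup h(W)$ with $W, h(W)$ disjoint clopen, I want to show $h$ is a product of conjugates (in $G_\varphi$, or in $[\varphi]$) of the elements of $N$ produced above. Using Theorem \ref{GlasnerWeiss}, if the involutions I extracted from $N$ are supported in clopen sets of the same "measure type" as $W$, I can conjugate one of them by an element of $[\varphi]$ to get an involution $h' \in N$ with support exactly $W \sqcup h(W)$ and with $h'|_W = h|_W$ — the second bullet of Theorem \ref{GlasnerWeiss} gives the homeomorphism matching up the supports, and one has to be slightly careful to also match $h$ on $W$, which can be arranged since both $h$ and $h'$ are determined on their supports by their restriction to $W$ once we know $h'(W) = h(W)$. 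Thus every such "simple" involution lies in $N$. Then I would invoke the standard fact that the involutions with clopen two-set support generate $[\varphi]$ (indeed they generate a dense subgroup, and together with the commutator-absorption property this is enough), concluding $\overline{[\varphi]} = G_\varphi \subseteq N$ when $N$ is a closed normal subgroup of $G_\varphi$; and $[\varphi] \subseteq N$ when $N$ is normal in $[\varphi]$, which combined with $\overline{N} = G_\varphi \supseteq [\varphi]$ and density of $[\varphi]$... actually for the $[\varphi]$ statement I want $\overline N = \overline{[\varphi]}$, i.e. $N$ is dense, which follows once $N$ contains a dense subset such as the group generated by all clopen-supported involutions.

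The cleanest route, and the one I would ultimately write, is: use Theorem \ref{HrushovskiStrong} to reduce to a density statement, namely that $\overline N \supseteq \left[[\varphi']\right]$ for an orbit-equivalent $\varphi'$ whose topological full group is dense; then the matching argument above, applied inside the topological full group where Theorem \ref{GlasnerWeiss} lets one put involutions wherever one wants, shows $N$ contains (a conjugate of) every sufficiently-small-supported involution of $\left[[\varphi']\right]$, and since these generate $\left[[\varphi']\right]$ we get $\overline N \supseteq \overline{\left[[\varphi']\right]} = G_\varphi$ (resp. $\overline{[\varphi]}$). The main obstacle I anticipate is the bookkeeping in the first step: extracting from an \emph{arbitrary} nontrivial $g \in N$ an involution in $N$ whose support is a clopen set of prescribed invariant-measure values (so that Glasner--Weiss applies to relocate it). The issue is that a priori $g$ might not be in $[\varphi]$ at all (it is only in $G_\varphi$), so $g(U) \cap U = \emptyset$ for some clopen $U$ is all one gets; one then works with $[\sigma, g]$ for $\sigma$ an involution supported in $U$, observes this commutator is itself an involution-like element (it is $\sigma$ on $U$ and $g\sigma g^{-1}$ on $g(U)$, which is again an involution with two-set clopen support $V \sqcup g(V)$ once $\sigma = $ transposition of $V$ and $g(V)$ inside $U$ — wait, $\sigma$ supported in $U$ can't move $V$ to $g(V)\subseteq g(U)$), so in fact one should take $\sigma$ supported in $U \cup g(U)$ exchanging a clopen $V \subseteq U$ with $g(V)\subseteq g(U)$, and then $[\sigma,g]$ is an honest involution in $[\varphi]\cap N$ with controllable small support — that is the technical heart and where I expect to spend the most care.
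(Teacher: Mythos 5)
Your plan diverges from the paper's proof, which is much shorter: the paper shows that the derived subgroup of $[\varphi]$ is dense (by Theorem~\ref{HrushovskiStrong} it suffices to express $p$-cycles as products of commutators, which is done via the homomorphism $\sigma\mapsto g_\sigma$ from $S_p$ to $[\varphi]$, using $[S_p,S_p]=A_p$ for odd $p$ and \cite{Bezuglyi-Medynets2008}*{Corollary 4.8} to handle the transposition needed for even $p$), and then simply invokes the Bezuglyi--Medynets absorption result quoted above. Once you have cited that absorption result, as you do in your first sentence, the entire Epstein-style commutator machinery you set up is unnecessary: a closed normal $N$ contains the derived subgroup, so all that is left is a density statement about the derived subgroup itself, and that is where the content lies.

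Beyond the structural redundancy, there is a concrete error at the point you flag as the technical heart. If $\sigma$ is an involution supported in $U\cup g(U)$ exchanging a clopen $V\subseteq U$ with $g(V)\subseteq g(U)$, then $[\sigma,g]=\sigma g\sigma g^{-1}$ is \emph{not} an involution: provided $g^{-1}(V)$, $V$, $g(V)$, $g^2(V)$ are pairwise disjoint (which is the generic situation), $[\sigma,g]$ cyclically permutes $V\to g(V)\to g^2(V)\to V$ and fixes the rest, so it has order $3$. The version that does give an involution is the one from your first paragraph, with $\sigma$ supported inside $U$ only, in which case $[\sigma,g]=\sigma\cdot(g\sigma^{-1}g^{-1})$ is a product of two commuting involutions supported on $U$ and on $g(U)$ respectively; but then its support is a four-piece set $V'\sqcup V''\sqcup g(V')\sqcup g(V'')$, not a two-piece one, and extracting a genuine transposition from it requires a further commutator. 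Even after that repair, two further points remain unaddressed: you need an argument that products of disjoint-support involutions are dense in $[\varphi]$ (this is true, since every $p$-cycle is a product of transpositions via the $S_p$-homomorphism, but it must be said); and you need to explain how the measure-values of the supports of the involutions you extract from $N$ --- which are constrained by the original $g$ and $U$ --- can be matched, via Theorem~\ref{GlasnerWeiss}, to the measure-values of an arbitrary target transposition. As written the ``matching argument'' assumes this away. The paper's route through products of commutators of $g_\sigma$'s avoids all of these issues at once.
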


\begin{proof}
We show that the derived subgroup of $\left[\varphi\right]$ is dense, which implies the simplicity of both groups by the result of Bezuglyi--Medynets recalled in the paragraph before Proposition \ref{comeager_commutator}. Say that $g \in \left[\varphi\right]$ is a \emph{$p$-cycle} on a clopen set $U$ if $U$ is the support of $g$, $g^p=1$ and there exists a clopen $A$ such that $U= A \sqcup g(A) \ldots \sqcup g^{p-1} A$ (this is the same as saying that the $g$-orbit of every element of $U$ has cardinality $p$, and every element outside $U$ is fixed by $g$). 
Theorem \ref{HrushovskiStrong} implies that products of cycles are dense in $\left[\varphi\right]$, so it is enough for our purposes to show that $p$-cycles are products of commutators for any integer $p$.

Let $g$ be a $p$-cycle on a clopen $U$, with $U=\sqcup_{i=0}^{p-1}g^i(A)$. Given a permutation $\sigma$ belonging to the permutation group $S_p$ on $p$ elements, we denote by $g_{\sigma}$ the element of $\left[\varphi\right]$ defined by setting $g_{\sigma}(x)=x$ for all $x$ outside $U$ and 
$$\forall i \in \{0,\ldots,p-1\} \ \forall x \in g^{i}(A) \quad g_{\sigma}(x)=g^{\sigma(i)-i}(x)\ .$$
The map $\sigma \mapsto g_{\sigma}$ is a homomorphism from $S_p$ to $\left[\varphi\right]$.
Since the commutator subgroup of $S_p$ is the alternating subgroup $A_p$, we thus see that whenever $\sigma$ belongs to $A_p$ $g_{\sigma}$ is a product of commutators. In particular, $g$ has this property if $p$ is odd. If $p$ is even, let $\tau$ be the transposition of $S_p$ which exchanges $0$ and $1$. Then \cite{Bezuglyi-Medynets2008}*{Corollary 4.8} tells us that $g_\tau$ is a product of $10$ commutators in $\left[\varphi\right]$; since $gg_\tau=g_{\sigma}$ for some $\sigma \in A_p$, $g$ is also a product of commutators.

\end{proof}
Let us mention another reason why we think it might be interesting to further study the properties of closures of full groups.

\begin{prop}
Let $\varphi$ be a minimal homeomorphism of a Cantor space $X$. Then $G_\varphi$ is an amenable Polish group.
\end{prop}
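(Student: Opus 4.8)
The plan is to deduce amenability of $G_\varphi$ from the existence of a dense amenable subgroup, together with the general fact that the closure of an amenable subgroup of a topological group is amenable (amenability being preserved under taking closures and under passing to dense subgroups: if $H \le G$ is dense and amenable, then every continuous affine $G$-action on a compact convex set restricts to an $H$-action, and an $H$-fixed point is automatically $G$-fixed by density and continuity). So it suffices to exhibit a dense amenable subgroup of $[\varphi]$, which is itself dense in $G_\varphi$ by definition.

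First I would invoke the remark made just after the proof of Theorem~\ref{HrushovskiStrong}: the argument of Grigorchuk--Medynets shows that $[\varphi]$ contains a \emph{dense locally finite subgroup} (the group of all elements of $[\varphi]$ preserving a fixed positive semi-orbit, after replacing $\varphi$ by an orbit equivalent homeomorphism so that $[[\varphi]]$ is dense in $[\varphi]$ — this replacement does not change the full group nor its closure). A locally finite group is amenable, since amenability is closed under directed unions and finite groups are amenable. Hence $[\varphi]$ has a dense amenable (even locally finite) subgroup $L$; then $L$ is also dense in $G_\varphi$, and by the closure argument above $G_\varphi$ is amenable. Being a closed subgroup of the Polish group $\Homeo(X)$, $G_\varphi$ is Polish, so it is an amenable Polish group.

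Alternatively, if one does not wish to rely on the parenthetical remark about the dense locally finite subgroup, one can argue directly from Theorem~\ref{HrushovskiStrong} itself: that theorem states that the set of tuples generating a finite group is dense in $[\varphi]^n$ for every $n$, which is precisely the statement that $[\varphi]$ is \emph{locally embeddable into finite groups} in the strong topological sense needed here — more concretely, it lets one build, for any finite subset $F \subseteq [\varphi]$ and any basic neighborhood condition, a finite subgroup approximating $F$; passing to an increasing chain of such finite subgroups whose union is dense yields a dense locally finite subgroup again. Either route reduces the proposition to the elementary permanence properties of amenability.

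The main obstacle, such as it is, is purely expository: one must be careful that the density statement really does produce a single \emph{subgroup} that is dense, rather than merely a dense set of ``finitely-generated-finite'' tuples. The clean way around this is the semi-orbit construction of \cite{Grigorchuk2012}*{Section 5} already cited in the excerpt, which directly gives an honest locally finite subgroup; with that in hand the rest is the standard fixed-point-by-density argument and there is no real difficulty. I would therefore keep the proof to a few lines: cite the dense locally finite subgroup, note local finiteness implies amenability, note density passes amenability to the closure, and conclude.
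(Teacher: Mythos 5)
Your proof is correct and follows essentially the same approach as the paper: both rely on the Grigorchuk--Medynets density result (Theorem~\ref{HrushovskiStrong}) and the permanence of amenability under directed unions and under passing from a dense (discretely) amenable subgroup to the ambient topological group. The only difference is one of packaging: the paper's main line of argument feeds Theorem~\ref{HrushovskiStrong} into Kechris--Rosendal's Proposition 6.4 to produce an increasing sequence of compact subgroups of $G_\varphi$ with dense union, whereas you invoke directly the dense locally finite subgroup coming from the semi-orbit construction in \cite{Grigorchuk2012}; the paper explicitly states this second route as an alternative in the last line of its proof, so you have simply promoted the alternative to the main argument. One small wording nit: ``amenability being preserved ... under passing to dense subgroups'' reads backwards on a first pass, but you immediately state the correct implication (dense amenable subgroup implies ambient group amenable), so the substance is fine.
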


\begin{proof}
By Theorem \ref{HrushovskiStrong} there exists an increasing sequence of compact subgroups of $G_\varphi$ whose union is dense in $G_\varphi$ (see \cite{Kechris2007}*{Proposition 6.4}, which must then be amenable.

The result would also follow immediately from the stronger fact that $G_\varphi$ actually contains a dense locally finite subgroup.

\end{proof}

This fact is particularly interesting in view of a question of Angel--Kechris--Lyons \cite{Kechris2012}*{Question 15.1} asking whether, whenever an amenable Polish group has a metrizable universal minimal flow, the universal minimal flow is uniquely ergodic. A positive answer to the following problem would then show that the answer to Angel--Kechris--Lyons' question is negative.

\begin{question} Let $\varphi$ be a minimal homeomorphism of a Cantor space $X$. Is the universal minimal flow of $G_{\varphi}$ metrizable?
\end{question}

\begin{remark}
Proving that there exists \emph{one} minimal homeomorphism $\varphi$ which is not uniquely ergodic, yet has a metrizable universal minimal flow would be enough to answer negatively the question of Angel--Kechris--Lyons mentioned above.  In the opposite direction, proving that the universal minimal flows of these groups are not metrizable as soon as the homeomorphism is not uniquely ergodic, and are metrizable otherwise, would point towards a positive answer to their question. 

At the moment, this seems out of reach: for instance, when $\varphi$ is equal to the usual binary odometer, $G_{\varphi}$ is just the set of all homeomorphisms of the Cantor space $\{0,1\}^\omega$ which preserve the usual  $(1/2,1/2)$-Bernoulli measure on $\{0,1\}^\omega$. Identifying the universal minimal flow of this group is already a very complicated problem, studied in \cite{Kechris2012a} where a candidate (which is metrizable) is proposed. Thus it seems that the current state of the art does not, for the moment, allow us to hope for an easy answer to our 
question.
\end{remark}

\section{Uniquely ergodic homeomorphisms and Fra\"iss\'e theory}
From now on, we focus on the case when $\varphi$ is uniquely ergodic, i.e.~there is a unique $\varphi$-invariant probability measure. 

\begin{defn}
A Borel probability measure $\mu$ on a Cantor space $X$ is said to be a \emph{good measure} if $\mu$ is atomless, has full support, and satisfies the following property: whenever $A,B$ are clopen subsets of $X$ such that $\mu(A) \le \mu(B)$, there exists a clopen subset $C$ of $B$ such that $\mu(C)=\mu(A)$.
\end{defn}

Note that in the definition above the fact that $A,B,C$ are clopen is essential. Good measures are relevant in our context because of the following fact.

\begin{theorem}[\cite{Akin2005}; Glasner--Weiss \cite{Glasner1995a}] \label{measure and homeo}
Let $\mu$ be a probability measure on a Cantor space $X$. There exists a minimal homeomorphism $\varphi$ of $X$ such that $\{\mu\}= \mathcal M_{\varphi}$ if, and only if, $\mu$ is a good measure.
\end{theorem}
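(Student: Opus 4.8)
\emph{Plan and necessity.} The statement is an equivalence; the necessity direction is quick and the sufficiency direction carries all the weight. For necessity, assume $\varphi$ is a minimal homeomorphism of $X$ with $\mathcal M_{\varphi}=\{\mu\}$. First, $\mu$ is atomless: if $x$ were an atom with $\mu(\{x\})=a>0$, then by invariance every point of the $\varphi$-orbit of $x$ would also be an atom of mass $a$; but there are at most $1/a$ such atoms, so the orbit of $x$ would be finite, contradicting minimality (as $X$ is infinite). Next, $\operatorname{supp}(\mu)$ is a nonempty closed $\varphi$-invariant subset of $X$, hence equals $X$, so $\mu$ has full support. Finally, let $A,B\in\Clop(X)$ with $\mu(A)\le\mu(B)$. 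If $\mu(A)=\mu(B)$ take $C=B$. If $\mu(A)<\mu(B)$, then, $\mathcal M_{\varphi}$ being the singleton $\{\mu\}$, we have $\nu(A)<\nu(B)$ for every $\nu\in\mathcal M_{\varphi}$, so the first item of Theorem \ref{GlasnerWeiss} provides $g\in\left[[\varphi]\right]$ with $g(A)\subset B$; writing $g$ as a power of $\varphi$ on each piece of a clopen partition and using $\varphi$-invariance of $\mu$ gives $\mu(g(A))=\mu(A)$, so $C:=g(A)$ works. Hence $\mu$ is good.

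\emph{Sufficiency --- strategy and a splitting lemma.} Given a good measure $\mu$ on $X$, the plan is to produce $\varphi$ through a refining sequence of Kakutani--Rokhlin partitions of $X$, equivalently to realize the system as the Vershik (successor) map of a simple, properly ordered Bratteli diagram. The one fact extracted from goodness at the outset is a \emph{splitting lemma}: for any clopen $U$ and any $a_1,\dots,a_k$ in the clopen value set $V=\{\mu(W):W\in\Clop(X)\}$ with $a_1+\dots+a_k=\mu(U)$, one can partition $U$ into clopen sets of measures $a_1,\dots,a_k$ (carve off a clopen subset of measure $a_1$ using goodness, then recurse on the complement, whose measure automatically lies in $V$). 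This is the device that lets the construction hit prescribed measures at each stage.

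\emph{Sufficiency --- the construction.} Proceed by induction, building Kakutani--Rokhlin partitions $\mathcal P_n=\{\varphi^i(B_{n,j}):j\in J_n,\ 0\le i<h_{n,j}\}$ with: $\mathcal P_{n+1}$ refining $\mathcal P_n$; nested bases $B_{n+1}\subseteq B_n$ and tops $T_{n+1}\subseteq T_n$ shrinking to single points (so that $\varphi$ is a genuine homeomorphism, via the standard correspondence, due to Herman--Putnam--Skau, between properly ordered Bratteli diagrams and Cantor minimal systems); atoms of diameter tending to $0$ (so the $\mathcal P_n$ generate $\Clop(X)$); heights $\min_j h_{n,j}\to\infty$; all atoms of a given level-$n$ tower of equal $\mu$-measure and the top-to-base maps measure preserving, so that $\varphi$ preserves $\mu$; and, crucially, the transition matrix from level $n$ to level $n+1$ having strictly positive entries whose ratios are uniformly bounded. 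The splitting lemma is what makes the inductive step go through: passing from $\mathcal P_n$ to $\mathcal P_{n+1}$ amounts to choosing a positive integer matrix $M$ recording how often each new tower passes through each old tower, together with the uniform atom-measures $c_{n+1,j'}\in V$ of the new towers subject to $\sum_{j'}M(j,j')\,c_{n+1,j'}=\mu(B_{n,j})$ for every old $j$, and then cutting each $B_{n,j}$ accordingly. Once the sequence is built, minimality of $\varphi$ follows because the diagram is simple and properly ordered, while unique ergodicity follows because any invariant measure is constant on the atoms of each tower, the associated vectors are linked by the transition matrices, and strict positivity with bounded ratios makes the nested cones $\bigcap_n M_1\cdots M_n\,\mathbb R_+^{J_n}$ contract to a single ray (contraction of products of positive matrices in the Hilbert projective metric); since $\mu$ is itself an invariant measure, that ray is the one determined by $\mu$, and as the atoms generate $\Clop(X)$ the unique invariant measure is exactly $\mu$.

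\emph{The main obstacle.} The delicate point is running the induction so that every requirement holds simultaneously: $M$ must be strictly positive with controlled entries (to force unique ergodicity) while the new atom-measures $c_{n+1,j'}$ must lie in $V$ and solve the linear system $\sum_{j'}M(j,j')\,c_{n+1,j'}=\mu(B_{n,j})$, all the while refining $\mathcal P_n$ enough to separate points and keep the heights growing. Producing such $M$ and $c_{n+1,j'}$ --- in effect an approximation/realization problem for the value group of $\mu$ inside a sequence of positive incidence matrices --- is exactly where the full strength of the goodness hypothesis is used, and is the technical heart of the argument.
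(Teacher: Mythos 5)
The paper does not actually prove this theorem: it is cited from Akin and Glasner--Weiss, with the remark immediately following the statement that necessity ``follows directly from the result we recalled as Theorem~\ref{GlasnerWeiss}'' and that sufficiency is due to Akin. Your necessity argument is correct and is exactly what the paper is pointing at: atomlessness and full support follow from minimality by the standard orbit-counting and invariant-closed-set arguments, and for the subset-exchange property you correctly invoke the first item of Theorem~\ref{GlasnerWeiss} (unique ergodicity turns the single inequality $\mu(A)<\mu(B)$ into the hypothesis ``$\nu(A)<\nu(B)$ for all $\nu\in\mathcal M_\varphi$'') and use that any $g\in\left[[\varphi]\right]$ preserves $\mu$ because it is locally a power of $\varphi$.

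The sufficiency direction, however, is not proved, only planned. You lay out a Bratteli--Vershik scheme (refining Kakutani--Rokhlin partitions, nested bases/tops shrinking to a point, heights growing, atoms of a tower of equal measure, strictly positive incidence matrices with bounded ratios to force Hilbert-metric contraction and hence unique ergodicity), and you correctly isolate a ``splitting lemma'' extracted from goodness. But the central inductive step --- simultaneously choosing a strictly positive integer matrix $M$ with controlled ratios, atom-measures $c_{n+1,j'}\in V(\mu)$ solving $\sum_{j'}M(j,j')\,c_{n+1,j'}=\mu(B_{n,j})$, a consistent proper order, and a refinement that separates points --- is not carried out; you explicitly defer it as ``the technical heart of the argument.'' This is precisely the part of Akin's theorem that carries the weight (and where goodness must be used in a nontrivial way: for a general group-like $V$ not closed under division by integers, such as $\langle 1/2,1/\pi\rangle$, it is far from obvious that positive incidence matrices with bounded ratios and $V$-valued atom measures can always be produced). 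Until that induction is actually run, the sufficiency direction remains a strategy statement rather than a proof. Also note, as a smaller point, that for the Hilbert-metric contraction argument to close you additionally need the number of towers per level to stay bounded (or to telescope to achieve this), which your sketch does not mention.
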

The fact that the goodness of $\mu$ is a necessary condition in the result above is due to Glasner--Weiss (it follows directly from the result we recalled as Theorem \ref{GlasnerWeiss}); the fact that is is sufficient is due to Akin.

It seems natural to ask the following question, which we only mention in passing. 
\begin{question}
Can one give a similar characterization of compact, convex subsets $K$ of the set of probability measures on a Cantor space $X$ for which there exists a minimal homeomorphism $\varphi$ of $X$ such that $K$ is the set of all $\varphi$-invariant measures? 
\end{question}

The following invariant of good measures is very useful.

\begin{defn}[Akin \cite{Akin2005}]
Let $\mu$ be a good measure on a Cantor space $X$. Its \emph{clopen value set} is the set 
$$V(\mu)= \{r \in [0,1] \colon r= \mu(A) \text{ for some clopen } A \subseteq X\} \ . $$
\end{defn}

A good measure $\mu$ on a Cantor space $X$ is completely characterized by its clopen value set, in the sense that for any two good measures $\mu, \nu$ on $X$ with the same clopen value set there must exist a homeomorphism $g$ of $X$ such that $g_* \mu=\nu$ (see \cite{Akin2005}*{Theorem~2.9}; we discuss a different proof below). If $\mu$ is a good measure, then $V(\mu)$ is the intersection of a countable subgroup of $(\R,+)$ and $[0,1]$, contains $1$, and is dense in the interval; conversely it is not hard to see that any such set is the clopen value set of some good measure $\mu$. The density condition corresponds to the fact that $\mu$ is atomless, and is equivalent (since $1 \in V$) to saying that $V$ is not contained in $\frac{1}{p} \Z$ for any integer $p$.

\begin{defn}
Given a good measure $\mu$ on a Cantor space $X$, we follow \cite{Akin2005} and denote by $H_{\mu}$ the set of all homeomorphisms of $X$ which preserve $\mu$. For a countable $V \subset [0,1]$, we denote by $\langle V \rangle$ the intersection of the subroup of $(\R,+)$ generated by $V\cup \{1\}$ with $[0,1]$; we say that $V$ is \emph{group-like} when $V$ is not contained in $\frac{1}{p} \Z$ for any integer $p$ and $V= \langle V \rangle$. In that case, we denote by $\mu_V$ the good measure whose clopen value set is equal to $V$.
\end{defn}

Of course, $\mu_V$ above is only defined up to isomorphism; since we focus on isomorphism-invariant properties we allow ourselves this small abuse of terminology.

We would like to understand when there exists a comeager conjugacy class in $H_{\mu}$. Akin \cite{Akin2005}*{Theorem 4.17} proved that this holds true whenever $V(\mu) + \Z$ is a $\Q$-vector subspace of $\R$, or equivalently whenever any clopen subset can be partitioned into $m$ clopen subsets of equal measure for any integer $m$. One can check that this also holds true, for instance, when $\mu$ is a Bernoulli measure (this is explicitly pointed out in \cite{Kechris2007}), and it was our hope that this property would be satisfied by all good measures. Unfortunately, such is not the case, as we will see shortly; since we approach this problem via techniques developed by Kechris--Rosendal \cite{Kechris2007}, we quickly recall the framework for their results.

A \emph{signature} $L$ is a set $\{\{(f_i,n_i)\}_{i \in I}, \{(R_j,m_j)\}_{j \in J}, \{c_k\}_{k \in K}\}$
 where each $f_i$ is a \emph{function symbol} of arity $n_i$, each $R_j$ is a \emph{relation symbol} of arity $m_j$, and each $c_k$ is a \emph{constant symbol}. 

Given a signature $L$, an \emph{$L$-structure} $\mathcal M$ consists of a set $M$ along with a family 
$\{\{(f_i^{\mathcal M})\}_{i \in I}, \{R_j^{\mathcal M}\}_{j \in J}, \{c_k^{\mathcal M}\}_{k \in K}\}$ where each $f_i^{\mathcal M}$ is a function from $M^{k_i}$ to $M$, each $R_j^{\mathcal M}$ is a subset of $M^{m_j}$, and each $c_k^{\mathcal M}$ is an element of $M$. In our context, one might for instance consider the signature containing constant symbols $0$ and $1$, binary functional symbols $\wedge$ and $\vee$, and consider the class of structures in that signature which are boolean algebras with minimal element (the empty set) corresponding to the constant $0$, and maximal element (the whole set) corresponding to the constant $1$. It might also simplify matters to add a unary function symbol standing for complementation. Here, we are not concerned merely with boolean algebras, but with probability algebras. One way to fit those into our framework is to first fix a set $V \subseteq [0,1]$ (the set of values allowed for the probability measure), and add a unary predicate $\mu_v$ for each $v \in V$. Then, one can naturally consider the class of probability algebras with measure taking values in $V$ as a class of structures in this signature $L_V$.

There are natural notions of embedding/isomorphism of $L$-structures. Assume that we have fixed a countable signature $L$ (that is, each set $I$, $J$, $K$ above is at most countable), and that $\mcK$ is a class of finite $L$-structures. Then one says that $\mcK$ is a \emph{Fra\"iss\'e class} if it satisfies the four following conditions:
\begin{enumerate}
\item $\mcK$ countains only countably many structures up to isomorphism, and contains structures of arbitrarily large finite cardinality.
\item $\mcK$ is \emph{hereditary}, i.e.~if $A \in \mcK$ and $B$ embeds in $\mcK$, then $B \in \mcK$.
\item $\mcK$ satisfies the \emph{joint embedding property} (JEP), that is, any two elements of $\mcK$ embed in a common element of $\mcK$.
\item $\mcK$ satisfies the \emph{amalgamation property} (AP), that is, given $A,B,C \in \mcK$ and embeddings $i \colon A \to B$, $j \colon A \to C$, there exists $D \in \mcK$ and embeddings $\beta \colon B \to D$ and $\gamma \colon C \to D$ such that $\beta \circ i= \gamma \circ j$.
\end{enumerate}

The point is that, given a Fra\"iss\'e class $\mcK$, there exists a unique (up to isomorphism) $L$-structure $\bK$ whose \emph{age} is $\mcK$ and which is \emph{homogeneous}. Here, the age of a structure is the class of finite $L$-structures which embed in it, and a structure $\bK$ is homogeneous if any isomorphism between finite substructures of $\bK$ extends to an automorphism of $\bK$. Conversely, if $\bK$ is a countable homogeneous $L$-structure whose finitely generated substructures are finite, then its age is a Fra\"iss\'e class. 

For instance, the class of finite boolean algebras is a Fra\"iss\'e class and its limit is the unique countable atomless Boolean algebra, whose Stone space is the Cantor space ---so the automorphism group of the limit is just the homeomorphism group of the Cantor space in another guise. Note that the automorphism group of any countable structure $\bK$ may be endowed with its permutation group topology, for which a basis of neighborhoods of the neutral element is given by pointwise stabilizers of finite substructures. 

Let us fix a good measure $\mu$ on a Cantor space $X$, set $V=V(\mu)$, and consider the probability algebra $(\Clop(X),\mu)$ made up of all clopen subsets of $X$ endowed with the measure $\mu$, in the signature $L_V$ discussed above. Then it follows from Theorems \ref{measure and homeo} and \ref{GlasnerWeiss} that this is a homogeneous structure: any measure-preserving isomorphism between two finite clopen subalgebras of $X$ is induced by a measure-preserving homeomorphism of $X$, i.e.~an automorphism of the boolean algebra $\Clop(X)$ which preserves the measure $\mu$. Also, an easy induction on the cardinality of finite subalgebras of $(\Clop(X),\mu)$ shows that its age consists of the finite probability algebras whose measure takes values in $V$. Hence this is a Fra\"iss\'e class; note that this implies that two good measures $\mu_1,\mu_2$ such that $V(\mu_1)=V(\mu_2)$ must be isomorphic, by the uniqueness of the Fra\"iss\'e limit (this was first proved by Akin \cite{Akin2005}).

Now we can come back to the question of existence of dense/comeager conjugacy classes in $H_\mu$, when $\mu$ is a good measure. Assume again that $\mcK$ is a Fra\"iss\'e class in some countable signature $L$, let $\bK$ be its Fra\"iss\'e limit and let $\mcK_1$ denote the class of structures of the form $(A,\varphi)$, where $A$ belongs to $\mcK$ and $\varphi$ is a \emph{partial} automorphism of $A$, i.e.~an isomorphism from a substructure of $A$ onto another substructure of $A$. An embedding between two such structures $(A,\varphi)$ and $(B,\psi)$ is an embedding $\alpha$ of $A$ into $B$ such that $\psi\circ\alpha$ extends $\alpha\circ\varphi$. Then, the existence of a dense conjugacy class in $\Aut(\bK)$ is equivalent to saying that the class $\mcK_1$ satisfies the joint embedding property (see \cite{Kechris2007}*{Theorem 2.1}).

The existence of a comeager conjugacy class is a bit harder to state. Keeping the notations above, say that a class of structures $\mcK$ satisfies the \emph{weak amalgamation property} if for any $A \in \mcK$ there exists $B \in \mcK$ and an embedding $i \colon A \to B$ such that for any $C,D \in \mcK$ and any embeddings $r \colon B \to C$, $s\colon B \to D$, there exists $E \in \mcK$ and embeddings $\gamma \colon C \to E$ and $\delta \colon D \to E$ such that $\gamma \circ r \circ i= \delta \circ s \circ i$. Then \cite{Kechris2007}*{Theorem 3.4} states that there exists a comeager conjugacy class in $\Aut(\bK)$ if and only if $\mcK_1$ satisfies both (JEP) and (WAP).

We now know what combinatorial properties to study when looking at the automorphism groups of good measures; fix a good measure $\mu$ and consider the corresponding Fra\"iss\'e class $\mcK^\mu$, which is made up of all finite probability algebras whose measure takes its values inside $V(\mu)$. Theorem \ref{HrushovskiStrong} provides a good starting point: indeed, it shows that any element of $\mcK^\mu_1$ can be embedded in an element of the form $(A,\varphi)$, where $\varphi$ is a \emph{global} automorphism of $A$. We denote this class by~$\mcK^\mu_{\text{aut}}$.

We now provide our original proof of the fact that $\mcK^\mu_{\text{aut}}$ is cofinal in $\mcK^\mu_1$; while the result below is weaker than Theorem \ref{HrushovskiStrong}, we feel that the proof might be of some independent interest, since the argument is really about good measures and could conceivably be useful in some other contexts.

\begin{theorem} \label{Hrushovski}
Let $\varphi$ be a uniquely ergodic homeomorphism of a Cantor space $X$. Then the set $\{g \in \left[\varphi \right] \colon g \emph{ has finite order} \}$ is dense in $\left[\varphi\right]$.
\end{theorem}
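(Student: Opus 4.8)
The plan is to reduce to the topological full group and then build, for a given element of $\left[[\varphi]\right]$, a nearby finite-order element using a Kakutani--Rokhlin partition adapted to that element. First I would use the now-standard fact (\cite{Bezuglyi2002}*{Theorem 1.6}, via \cite{Glasner1995a}*{Theorem 2.2} and \cite{Giordano1999}*{Lemma 3.3}) that, after replacing $\varphi$ by an orbit-equivalent minimal homeomorphism --- which leaves $\left[\varphi\right]$ unchanged --- we may assume $\left[[\varphi]\right]$ is dense in $\left[\varphi\right]$. So it suffices to approximate a fixed $\gamma \in \left[[\varphi]\right]$ by finite-order elements of $\left[[\varphi]\right]$. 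Since $\gamma$ is in the topological full group, its cocycle $x \mapsto n_x$ (defined by $\gamma(x) = \varphi^{n_x}(x)$) is continuous, hence bounded by some $K$; and likewise for $\gamma^{-1}$. I would fix a compatible metric $d$ and choose $\delta > 0$ separating the (finitely many) clopen level sets of the cocycles of $\gamma$ and $\gamma^{-1}$.

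Next I would invoke the existence of Kakutani--Rokhlin partitions $\{\varphi^i(B_n) : 0 \le n \le N,\ 0 \le i \le h_n - 1\}$ refining the level sets, with minimum tower height $\ge 2K+2$ and with all the ``boundary layers'' $Y_i = \bigcup_n \varphi^i(B_n)$, $Z_i = \bigcup_n \varphi^{-i}(B_n)$ ($0 \le i \le K$) of diameter less than $\delta$; minimality of $\varphi$ guarantees we can arrange this (the base can be taken of arbitrarily small diameter, and then uniform continuity of $\varphi^{\pm 1}$ controls the finitely many relevant iterates). On each atom $B_{n,i} = \varphi^i(B_n)$ the cocycle value $j_\gamma(n,i)$ is constant. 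The construction of the finite-order element $P \in \left[[\varphi]\right]$ then splits into three cases on each atom: if $\gamma$ maps $B_{n,i}$ inside the same tower (i.e.\ $0 \le j_\gamma(n,i) + i \le h_n - 1$) set $P = \gamma$ there; if $\gamma$ would send the atom ``below'' the base, then necessarily $i < K$, the atom is $Y_i$, $\gamma(Y_i) = Z_l$ for some $1 \le l \le K$, and I redirect $P$ to send $B_{n,i}$ to $\varphi^{h_n - l}(B_n)$ (the matching top layer) by the appropriate power of $\varphi$; symmetrically for atoms sent ``above'' the top. The key point is that this surgery is done consistently on whole layers $Y_i$, $Z_i$, so $P$ permutes the atoms of the partition by powers of $\varphi$ and --- because each tower is cyclically closed up by this redirection --- $P$ has finite order (it is a product of finite-order permutations of the towers).

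Finally I would check that $P$ is $\delta$-close to $\gamma$: on atoms where $P = \gamma$ there is nothing to do, and on the redirected atoms both $P(x)$ and $\gamma(x)$ lie in the same diameter-$<\delta$ layer $Y_l$ or $Z_l$, so $d(P(x),\gamma(x)) < \delta$; the same estimate for $P^{-1}$ versus $\gamma^{-1}$ follows by the symmetric role of $Y$ and $Z$ in the construction. Letting $\delta \to 0$ (equivalently, taking finer and finer K-R partitions) shows $\gamma$ is in the closure of the finite-order elements, which is the claim. The main obstacle --- and the place where care is genuinely needed --- is the bookkeeping in the ``overflow'' cases: one must verify that the redirection really does produce a bijection of finite order (that the layers $Y_i$, $1 \le i \le K$ and $Z_i$, $1 \le i \le K$ are pairwise disjoint and matched up correctly, which is exactly why the height bound $h_n \ge 2K+2$ is imposed), and that the pieces glue to a well-defined element of $\left[[\varphi]\right]$ rather than merely a partial map.
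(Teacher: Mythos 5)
Your proof is correct, but it follows a genuinely different route from the paper's own proof of Theorem~\ref{Hrushovski}. What you have reproduced is, almost word for word, the paper's proof of the \emph{stronger} Theorem~\ref{HrushovskiStrong} (the Grigorchuk--Medynets result), which works for an arbitrary minimal homeomorphism, not merely a uniquely ergodic one: pass to an orbit-equivalent system in which $\left[[\varphi]\right]$ is dense, then approximate an element of the topological full group by a finite-order element of $\left[[\varphi]\right]$ built from a Kakutani--Rokhlin partition with tall towers and thin boundary layers $Y_i,Z_i$. That argument is sound and does specialize to the uniquely ergodic case, so it proves the theorem; the bookkeeping you flag (that the overflow redirections pair $Y_i$'s with $Z_l$'s bijectively, that the height bound $h_n\geq 2K+2$ keeps the layers disjoint, and that the resulting $P$ lies in $\left[[\varphi]\right]$ and has finite order) is exactly what the paper checks in its proof of Theorem~\ref{HrushovskiStrong}.

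The paper's own proof of Theorem~\ref{Hrushovski} is quite different in spirit: it works directly with the unique invariant \emph{good measure} $\mu$. Given $g\in\left[\varphi\right]$ with clopen partitions $U_i, V_i$ satisfying $g(U_i)=V_i$, it builds a finite tree of clopen sets $\{C_\sigma\}$ and, reading the tree backwards, a common refinement $\mathcal B'$ together with a measure-preserving permutation $h'$ of $\mathcal B'$ compatible with the $U_i\mapsto V_i$ matching; the finite-order homeomorphism is then produced by Theorem~\ref{GlasnerWeiss} from the permutation of the finite algebra. The two approaches buy different things. Your (Grigorchuk--Medynets) approach is stronger and cleaner dynamically, handling all minimal homeomorphisms; but it uses the Bezuglyi--Kwiatkowski fact that one may pass to a strongly orbit-equivalent system with dense topological full group, a nontrivial input. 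The paper's argument only proves the uniquely ergodic case, but it is a statement purely about a single good measure --- a finite combinatorial extension lemma for partial $\mu$-automorphisms of $\Clop(X)$ --- which the authors explicitly value because it plugs directly into the Fra\"iss\'e-theoretic analysis of $\mathcal K^{\mu}_{1}$ in the rest of that section and could be reused in other contexts involving good measures.
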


\begin{proof}
We denote by $\mu$ the unique $\varphi$-invariant measure. Pick $g\in \left[\varphi\right]$ and let $U_0,\dots,U_{n-1},$ $V_0,\dots,V_{n-1}$ be clopen partitions of $X$ such that $g(U_i)=V_i$, (thus $\mu(U_i)=\mu(V_i)$), for $i=0,\dots,n-1$. We have to find a finite order homeomorphism $h\in \left[\varphi\right]$ also satisfying $h(U_i)=V_i$ for each $i$. Let $B_0,\dots,B_{m-1}$ be an enumeration of the atoms of the finite boolean algebra generated by the sets $U_i,V_i$. We will find a further finer clopen partition $\mcB'$ and a measure-preserving permutation $h':\mcB'\to\mcB'$ such that $h'(B')\subseteq V_i$ for $B'\subseteq U_i$, whence the desired $h$ can be obtained by applying Theorem \ref{GlasnerWeiss} to find an element of $\left[\varphi \right]$ mapping $B'$ to $h'(B')$ for all $B' \in \mcB'$.

It will be useful to introduce the following terminology: a clopen subset $S\subseteq V_i$ is an \emph{initial fragment} of $V_i$ if there is $j_0<m$ such that $B_j\cap V_i\subseteq S$ for every $j<j_0$ and $B_j\cap S=\emptyset$ for $j_0<j$.

To construct the desired $\mcB'$ and $h'$ we start by building a family of clopen sets $T_0=\{C_\sigma\in \Clop(X):\sigma\in m^{<\omega}\}$, each of its elements contained in some atom $B_j$, with the following properties:
\begin{enumerate}
\item $C_\emptyset=\emptyset$, $C_j=V_0\cap B_j$,
\item if $C_\sigma\subseteq U_i$, $\sigma\neq\emptyset$, then $C_{\sigma j}\subseteq V_i\cap B_j$,
\item if $C_\sigma\subseteq U_0$, $\sigma\neq\emptyset$, then $C_{\sigma j}=\emptyset$ for each $j$,
\item\label{muC} if $C_\sigma\nsubseteq U_0$, then $\mu(C_\sigma)=\mu(\bigcup_{j<m} C_{\sigma j})$,
\item\label{disj} if $\sigma\neq\tau$, then $C_\sigma\cap C_\tau=\emptyset$,
\item\label{infrag} for $k<\omega$, $\bigcup\{C_\sigma:|\sigma|\leq k, C_\sigma\subseteq V_i\}$ is an initial fragment of $V_i$,
\item\label{meas} for $0<i< n$ and $k<\omega$, $$\mu(\bigcup\{C_\sigma:|\sigma|<k, C_\sigma\subseteq U_i\})=\mu(\bigcup\{C_\sigma:|\sigma|\leq k, C_\sigma\subseteq V_i\}),$$
\item\label{pres} for $0<k<\omega$, $$\mu(U_0)=\mu(\bigcup\{C_\sigma:|\sigma|=k,\text{ or else }|\sigma|<k\text{ and }C_\sigma\subseteq U_0\}),$$
\item\label{empty} $C_{\sigma}$ is empty for all but finitely many $\sigma$.
\end{enumerate}
Note that the above conditions 
 imply that the set $\mcS= \{\emptyset\} \cup \{\sigma \colon C_{\sigma} \ne \emptyset\}$ must then be a subtree of $m^{< \omega}$, which is finite from condition \eqref{empty}.

The starting point is given by the first item. Suppose we have defined $C_\sigma$ for all $\sigma$ of length $|\sigma|\leq k$, $k\geq 1$, and assume inductively items (2)-(\ref{meas}) to hold so far. Let $$E^k_i=\{C_\sigma:|\sigma|=k, C_\sigma\subseteq U_i\},\ F^k_i=\{C_\sigma:|\sigma|\leq k, C_\sigma\subseteq V_i\}.$$ 
For $C_\sigma\in E^k_0$ we define $C_{\sigma j}=\emptyset$ for each $j$. For $i>0$, items (\ref{disj}), (\ref{meas}) and the fact that $\mu(U_i)=\mu(V_i)$ and that $\mu$ is a good measure allow us to find a clopen subset $S$ of $V_i\setminus\bigcup F^k_i$ of measure $\mu(\bigcup E^k_i)$. Again because $\mu$ is a good measure, we can partition $S$ into clopen sets $S_\sigma$ such that $\mu(S_\sigma)=\mu(C_\sigma)$ for each $C_\sigma\in E^k_i$;
 we then define $C_{\sigma j}=S_\sigma\cap B_j$. One should only take care to choose $S$ so that (\ref{infrag}) be true for $k+1$; the rest of the items (2)-(\ref{pres}) are preserved automatically. 
So, as for the choice of $S$, let $j_0<m$ be least such that $$\mu(\bigcup F^k_i)+\mu(\bigcup E^k_i)\leq\mu(\bigcup_{j\leq j_0} B_j\cap V_i).$$ Then 
choose ---once more, because $\mu$ is good--- a clopen subset $S_0\subseteq B_{j_0}\setminus\bigcup F^k_i$ of measure $\mu(\bigcup_{j\leq j_0} B_j\cap V_i)-\mu(\bigcup F^k_i)-\mu(\bigcup E^k_i)$, 
and define $S=((B_{j_0}\setminus S_0)\cup\bigcup_{j<j_0}(B_j\cap V_i))\setminus\bigcup F^k_i$. We~should remark that it is here that we need to restrict to a single good measure (thus to a uniquely ergodic homeomorphism); we do not know whether the argument can be adapted to a \emph{good family} of measures.

The tree already constructed, we should argue that item (\ref{empty}) holds. Let $b_k$ be the number of atoms $B_j$ intersecting $\bigcup\{C_\sigma:|\sigma|\leq k\}$; let $c_k$ be the number of non-empty sets $C_\sigma$ with $|\sigma|=k$, plus the number of non-empty sets $C_\sigma\subseteq U_0$ with $|\sigma|<k$. By construction, especially item (\ref{infrag}), $c_k<c_{k+1}$ only if $b_k<b_{k+1}$: a set $C_\sigma$ has more than one non-empty successor only if some new atom $B_j$ is attained. As $\{b_k\}$ is bounded by $m$, $\{c_k\}$ is bounded. This means that, after a certain point, each node has at most one non-empty successor, which must be of equal measure. Since distinct nodes are disjoint, this is possible only if (\ref{empty}) holds.

The family $\{C_\sigma \colon \sigma \text{ is a leaf of } \mcS \}$ forms a finite partition of $U_0$. Indeed: they are disjoint, by (\ref{disj}). If a non-empty node $N$ is not inside $U_0$, then it has a non-empty successor by (\ref{muC}); thus the leaves lie inside $U_0$. Finally, they add up the measure of $U_0$, by (\ref{pres}), and a clopen set of $\mu$-measure zero must be empty.

Starting from the leaves and following the tree backwards, we now introduce clopen partitions of the elements of $T_0$, and thus a partition of $\bigcup T_0$ itself; this will be an $h'$-invariant part of our intended refinement $\mcB'$, so we simultaneously define how $h'$ acts on this part. Let $\rho:m^{<\omega}\to\omega$ be the rank function of $\mcS$. Zero-rank nodes (the leaves) get the trivial partition; we postpone the definition of $h'$ on them, which will be given at the end of the construction. If we have already defined a partition $\mcP(\tau)$ of $C_\tau$ for each $\tau$ such that $\rho(\tau)<r$, and $\rho(\sigma)=r$, we use that $\mu$ is a good measure to get a partition $$\mcP(\sigma)=\{P_Q\subseteq C_\sigma:Q\in\mcP(\sigma j), j=0,\dots,m-1\}$$ such that $\mu(P_Q)=\mu(Q)$ for each atom $P_Q$. At the same time we define $h'(P_Q)=Q$. Moreover, we can associate an element $L(P)$ to each $P\in\mcP(\sigma)$, so that $L(C_\sigma)=C_{\sigma}$ if $\sigma$ is a leaf of $\mcS$, and $L(P_Q)=L(Q)$ in the inductive step. We stop the induction just before the root $\emptyset$, and finally define, for each leaf $\sigma$, $h'(C_\sigma)=P$ if $P$ is the (only) atom in the families $\mcP(0),\dots,\mcP(m-1)$ such that $L(P)=C_\sigma$.

Collecting the atoms in each $\mcP(\sigma)$ we now have a partition of $\bigcup T_0$, which we take to be the restriction of $\mcB'$ to this set; we know how to define $h'$ on this part. The whole process must now be repeated on $X\setminus\bigcup T_0$, starting with the sets $U'_i=U_i\setminus\bigcup T_0$, $V'_i=V_i\setminus\bigcup T_0$ and $B'_j=B_j\setminus\bigcup T_0$, the set $U'_1$ playing the role of $U_0$. Remark simply that $\mu(U'_i)=\mu(V'_i)$, by (\ref{meas}). And so on; after less than $n$ iterations of this process we reach the definition of the intended $\mcB'$ and $h'$.
\end{proof}

\begin{remark} The above result might be rephrased as follows: for a good measure $\mu$, a \emph{partial} $\mu$-automorphism of a finite algebra $\mcA\subset\Clop(X)$ can always be extended to a $\mu$-automorphism of a finite algebra $\mcB\subset\Clop(X)$ extending $\mcA$.\end{remark}

Now, in order to understand when $\mcK^\mu_1$ satisfies (JEP), we only need to think of automorphisms of finite algebras; explicitly, we now see that $H_\mu$ has a dense conjugacy class if and only if the following condition is satisfied: whenever $\mathcal A, \mathcal B$ are finite subalgebras of $\Clop(X)$, and $a,b$ are automorphisms of $(\mathcal A,\mu)$ ,$(\mathcal B,\mu)$ respectively, there exists a finite subalgebra $\mathcal C$ of $\Clop(X)$ and an automorphism $c$ of $(\mathcal C,\mu)$ such that there exist $\mu$-preserving embeddings $\alpha \colon \mathcal A \to \mathcal C$ and $\beta \colon \mathcal B \to \mathcal C$ satisfying $c(\alpha(A))= \alpha(a(A))$ for all $A \in \mathcal A$, and $c(\beta(B))=\beta(b(B))$ for all $B \in \mathcal B$. 

Unfortunately, this property is not always satisfied. Indeed, assume that $\mu$ satisfies (JEP), and that there exists $A \in \Clop(X)$ such that $ \mu(A)=\frac{1}{n}$ for some integer $n$. Then, there exists an element $a \in H_{\mu}$ such that $X$ is the disjoint union of $A,\ldots,a^{n-1}(A)$. 
Let now $r$ be any element of $V(\mu)$, $B$ a clopen subset of $X$ such that $\mu(B)=r$,  and consider: 
\begin{itemize}
\item the algebra $\mathcal A$ generated by $A,\ldots,a^{n-1}(A)$, with the automorphism $a$;
\item the algebra $\mathcal B$ made up of $B$ and its complement, with the identity automorphism $b$.
\end{itemize}
Assume one can jointly embed $(\mathcal A,a)$ and $(\mathcal B,b)$ in $(\mathcal C,c)$; identify $\mathcal A,\mathcal B$ with the subalgebras of $\mathcal C$ associated with these embeddings.
Then $B=B\cap\sqcup_{i=0}^{n-1}c^i(A)=\sqcup_{i=0}^{n-1}c^i(B\cap A)$, so $B$ is cut into $n$ clopen subsets of equal measure. This means that $\frac{r}{n}$ must belong to $V(\mu)$. Hence, the joint embedding property fails for instance when $V= \langle \frac{1}{2}, \frac{1}{ \pi} \rangle$. 

Analysing the above example, one can extract a combinatorial condition on $V$ that is equivalent to the existence of a dense conjugacy class in $H_{\mu_V}$.

\begin{prop}\label{carac_jep}
Let $V$ be a group-like subset of $[0,1]$. Then there is a dense conjugacy class in $H_{\mu_V}$ if, and only if, $V$ satisfies the following condition: whenever $a_i, b_j \in V$ and $n_i,m_j \in \N$ are such that $\sum_{i=1}^p n_i a_i =1= \sum_{j=1}^q m_j b_j$, there exist $c_{i,j} \in V$ such that
$$ \forall j \ m_jb_j=\sum_{i=1}^p \text{lcm}(n_i,m_j) c_{i,j} \quad \text{ and } \quad \forall i \ n_ia_i=\sum_{j=1}^q \text{lcm}(n_i,m_j) c_{i,j} \ .$$

This holds true in particular when $V+ \Z$ is a $\Q$-vector subspace of $\R$, and when $V+ \Z$ is a subring of $\R$.

\end{prop}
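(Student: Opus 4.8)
The plan is to pass to the Fra\"iss\'e picture of Kechris--Rosendal and then solve a small linear amalgamation problem. By Theorem~\ref{measure and homeo} fix a uniquely ergodic $\varphi$ with $\mathcal M_\varphi=\{\mu_V\}$; then $H_{\mu_V}=G_\varphi=\Aut(\bK)$ where $\bK=(\Clop(X),\mu_V)$ is the Fra\"iss\'e limit of the class $\mcK^\mu$ of finite probability algebras with measure valued in $V$. By \cite{Kechris2007}*{Theorem~2.1} there is a dense conjugacy class in $H_{\mu_V}$ iff $\mcK^\mu_1$ satisfies (JEP), and by Theorem~\ref{Hrushovski} (whose Remark states precisely that the subclass $\mcK^\mu_{\text{aut}}$ of objects carrying a \emph{global} automorphism is cofinal in $\mcK^\mu_1$) this is equivalent to (JEP) for $\mcK^\mu_{\text{aut}}$. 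Up to isomorphism an object of $\mcK^\mu_{\text{aut}}$ is a finite family of ``orbits'' $(n_i,a_i)$ --- meaning $n_i$ atoms of common measure $a_i$, cyclically permuted by the automorphism --- with $a_i\in V$, $a_i>0$ and $\sum_i n_ia_i=1$; conversely goodness of $\mu_V$ (to split $X$ into clopen pieces of prescribed $V$-valued measures) together with homogeneity of $\bK$ (equivalently Theorem~\ref{GlasnerWeiss}, to realize the cyclic permutation by an element of $H_{\mu_V}$) shows that every such family is realized by a finite subalgebra of $\Clop(X)$. So the statement reduces to: for every pair of orbit families $(n_i,a_i)_{i\le p}$, $(m_j,b_j)_{j\le q}$ of total mass $1$, there is $(\mathcal C,c)\in\mcK^\mu_{\text{aut}}$ into which both embed equivariantly, if and only if the displayed condition holds.

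For the direction ``(JEP)$\Rightarrow$condition'', start from such a $(\mathcal C,c)$ and identify the two families with their images. For each $i,j$ let $X_{ij}\in\Clop(X)$ be the intersection of the support of the $i$-th $\mathcal A$-orbit with the support of the $j$-th $\mathcal B$-orbit; it is $c$-invariant, and since $c$ sends a meet-piece $P\cap Q$ (with $P$ an atom of the $i$-th $\mathcal A$-orbit, $Q$ an atom of the $j$-th $\mathcal B$-orbit) to $c(P)\cap c(Q)$, and $c^t$ runs through the atoms of each orbit with period $n_i$, resp. $m_j$, the nonempty meet-pieces inside $X_{ij}$ are grouped by $c$ into orbits of length exactly $\text{lcm}(n_i,m_j)$. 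As $c$ preserves $\mu_V$, the atoms in any one such $c$-orbit have equal, $V$-valued measure; hence $c_{ij}:=\mu_V(X_{ij})/\text{lcm}(n_i,m_j)$ is a finite sum of elements of $V$ lying in $[0,1]$, so $c_{ij}\in V$ (this use of group-likeness is the only subtle point of this direction). Since the supports of the $\mathcal B$-orbits partition $X$, summing over $j$ gives $n_ia_i=\sum_j\mu_V(X_{ij})=\sum_j\text{lcm}(n_i,m_j)\,c_{ij}$, and symmetrically $m_jb_j=\sum_i\text{lcm}(n_i,m_j)\,c_{ij}$, which is exactly the condition.

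For the converse, given $c_{ij}\in V$ satisfying those equations, build $(\mathcal C,c)$ by taking, for each pair $(i,j)$ with $c_{ij}>0$, one $c$-orbit identified with $\Z/\text{lcm}(n_i,m_j)\Z$ all of whose atoms have measure $c_{ij}$; the total mass is $\sum_{i,j}\text{lcm}(n_i,m_j)\,c_{ij}=\sum_i n_ia_i=1$, so goodness and homogeneity realize $(\mathcal C,c)$ inside $H_{\mu_V}$. Embed the $\mathcal A$-family by sending the $k$-th atom of the $i$-th orbit to the union over $j$ of the coset $k+n_i\Z$ inside the $(i,j)$-coordinate $\Z/\text{lcm}(n_i,m_j)\Z$: this set is $c^{n_i}$-invariant and has measure $\sum_j(\text{lcm}(n_i,m_j)/n_i)\,c_{ij}=a_i$, giving an equivariant measure-preserving Boolean embedding; proceed symmetrically for $\mathcal B$ using the cosets $l+m_j\Z$. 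The point to check --- the mild technical obstacle here --- is that these two prescriptions are compatible and genuinely Boolean: on each coordinate $\Z/\text{lcm}(n_i,m_j)\Z$ the cosets of $n_i\Z$ and of $m_j\Z$ are two partitions whose common refinement is the partition into singletons, which is exactly the partition into atoms of that $c$-orbit.

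Finally, verify the two special cases by exhibiting the $c_{ij}$ explicitly. If $V+\Z$ is a subring of $\R$, take $c_{ij}=\gcd(n_i,m_j)\,a_ib_j$: since $a_ib_j\in V+\Z$ and $\gcd(n_i,m_j)\in\Z$ we get $c_{ij}\in V+\Z$, and $c_{ij}=(n_ia_i)(m_jb_j)/\text{lcm}(n_i,m_j)\le 1$, so $c_{ij}\in V$ (recalling $V=(V+\Z)\cap[0,1]$ for group-like $V$); moreover $\sum_j\text{lcm}(n_i,m_j)\,c_{ij}=\sum_j n_im_j\,a_ib_j=n_ia_i\sum_j m_jb_j=n_ia_i$ and symmetrically, so the condition holds. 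If $V+\Z$ is a $\Q$-subspace of $\R$, the same $\gcd(n_i,m_j)\,a_ib_j$ shows that the polytope of nonnegative real solutions $(c_{ij})$ of the two linear systems is nonempty, and it is bounded since $c_{ij}\le 1/\text{lcm}(n_i,m_j)\le 1$; a vertex of this polytope is the unique solution of an invertible \emph{integer} linear system whose right-hand side has entries ($n_ia_i$, $m_jb_j$ and $0$) in the $\Q$-vector space $V+\Z$, hence lies in $(V+\Z)^{pq}\cap[0,1]^{pq}=V^{pq}$ and witnesses the condition. (Alternatively, in this last case Akin's \cite{Akin2005}*{Theorem~4.17} already provides a comeager --- a fortiori dense --- conjugacy class.)
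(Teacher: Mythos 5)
Your proof is correct, and the overall route is the same as the paper's: reduce via Kechris--Rosendal and the cofinality of $\mathcal K^{\mu}_{\text{aut}}$ in $\mathcal K^{\mu}_1$ (Theorem~\ref{Hrushovski}) to a joint-embedding problem for orbit data $(n_i,a_i)$, $(m_j,b_j)$, extract the $c_{ij}$ from a joint embedding in one direction, and assemble a common extension from $\text{lcm}(n_i,m_j)$-blocks in the other. Two points where you diverge are worth noting. In the forward direction you observe that the meet-pieces $P\cap Q$ have $c$-orbits of length \emph{exactly} $\text{lcm}(n_i,m_j)$, whereas the paper tracks orbits of $\mathcal C$-atoms and only needs that those lengths are \emph{multiples} of the lcm; both versions give $c_{ij}\in V$ by group-likeness, and yours is marginally cleaner. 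More substantially, for the $\Q$-like special case the paper does not verify the displayed condition at all: it defers to the later proposition showing the stronger fact that $\mathcal K^{\mu_V}_{\text{aut}}$ has the amalgamation property. Your polytope-vertex argument is a genuinely different, self-contained verification: nonemptiness of the real feasible region is witnessed by $\text{gcd}(n_i,m_j)\,a_ib_j$, boundedness gives a vertex, and a vertex is the unique solution of an invertible integer linear system with right-hand side in the $\Q$-vector space $V+\Z$, so all its coordinates lie in $(V+\Z)\cap[0,1]=V$. This is a nice elementary alternative; what the paper's deferred route buys instead is the stronger (AP) and hence a \emph{comeager}, not merely dense, conjugacy class.
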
 

As we already mentioned above, Akin \cite{Akin2005} actually proved that $H_\mu$ has a comeager conjugacy class when $V(\mu)+\Z$ is a $Q$-vector subspace of $\R$, a fact that we will recover below.

\begin{proof}[Proof of Proposition \ref{carac_jep}]
To simplify the notation below we sometimes do not mention the measure; in particular, all automorphisms are to be understood as preserving $\mu$.

Assume that the joint embedding property for partial automorphisms holds, and consider $(a_i,n_i)_{1 \le i \le p}$, $(b_j,m_j)_{1 \le j \le q}$ as above. Then one can consider a finite algebra $\mathcal A$ with clopen atoms $A_{i,k}$ for $k \in\{0,\ldots,n_i-1\}$ such that each $A_{i,k}$ has measure $a_i$, and an automorphism $a$ of $\mathcal A$ such that $a(A_{i,k})=A_{i,k+1}$ for all $i, k$ (where addition is to be understood modulo $n_i$); similarly one can consider a finite algebra $\mcB$ with clopen atoms $B_{j,k}$ ($k \in \{0,\ldots,m_j-1\}$) and the corresponding automorphism $b$ of $\mcB$. For all $i$ we let $A_i= \cup A_{i,k}$ and $B_j= \cup B_{j,k}$.

Then we pick $(\mcC,c)$ such that $(\mcA,a)$ and $(\mcB,b)$ can be embedded in $(\mcC,c)$, where $c$ is an automorphism of the finite algebra $\mcC$, and we identify them with the corresponding subalgebras of $\mcC$. If for some $i,j$ $A_i \cap B_j$ is nonempty, then it is a $c$-invariant clopen set. Any atom of $\mcC$ contained in some $A_{i,k} \cap B_{j,l}$ must have an orbit whose cardinality is a multiple of $\text{lcm}(n_i,m_j)$, so $c_{i,j}=\frac{1}{\text{lcm}(n_i,m_j)} \mu(A_i \cap B_j)$ belongs to $V$.   
Then we have for all $i$:
$$n_i a_i= \mu(A_i)= \sum_{j=1}^q \mu(A_i \cap B_j)= \sum_{j=1}^q c_{i,j} \text{lcm}(n_i,m_j) \ .  $$
The same reasoning holds for $m_j b_j$.

This proves one implication; to prove the converse, let us first note that,
 given a clopen $U$ and two cycles $a,b$ on $U$ of orders $n,m$ respectively and such that 
$\frac{1}{\text{lcm}(n,m)}\mu(U)$ belongs to $V$, there exists a cycle on $U$ of order $N=\text{lcm}(n,m)$  in which both $a$ and $b$ embed. Such a cycle is obtained by cutting $U$ in $N$ disjoint pieces $C_i$ ($0 \le i \le N-1$) of equal measure, and setting $c(C_i)=C_{i+1}$ (modulo $N$). Then, let $N=nr=ms$; letting $A_0,\ldots, A_{n-1}$ denote the atoms contained in $U$ of the algebra on which $a$ is defined, one obtains the desired embedding by identifying each $A_i$ with $\sqcup_{k=0}^{r-1} C_{nk+i}$, and each $B_j$ with $\sqcup_{k=0}^{s-1} C_{mk+j}$.

Now, let $\alpha, \beta$ in $H_{\mu_V}$ be such that $X= \sqcup_{i=1}^p A_i$, where each $A_i$ is clopen and $\alpha$ is a product of cycles $\alpha_i$ of order $n_i$ on $A_i$, and $X= \sqcup_{j=1}^q B_j$, where each $B_j$ is clopen and $\beta$ is a product of cycles $\beta_j$ of order $m_j$ on $B_j$. From the proof of Theorem \ref{Hrushovski}, we see that it is enough to prove that $\alpha,\beta$ embed in a common element of $H_{\mu_V}$.
 Let $n_i a_i= \mu(A_i)$ and $m_j b_j = \mu(B_j)$, and apply our assumption on $V$ to get $c_{i,j}$ as in the lemma's statement. Let $I$ denote the set of all $(i,j)$ such that $c_{i,j} \ne 0$; we may find a finite subalgebra of $\Clop(X)$ whose atoms $C_{i,j}^k$ ($(i,j) \in I$, $1 \le k \le \text{lcm}(n_i,m_j)$) are of measure $c_{i,j}$. For each $(i,j) \in I$, set
$$D_{i,j}= \bigsqcup_{k=1}^{\text{lcm}(n_i,m_j)} C_{i,j}^k$$
We saw that there exists a cycle $\delta_{i,j}$ on $D_{i,j}$, of order $\text{lcm}(n_i,m_j)$, in which a cycle $\alpha_{i,j}$ on $D_{i,j}$ of order $n_i$ and a cycle $\beta_{i,j}$ on $D_{i,j}$ of order $m_j$ both embed. Let $\delta$ be the product of all $\delta_{i,j}$; $\alpha$ embeds in $\delta$ as the product of all $\alpha_{i,j}$, and $\beta$ embeds in $\delta$ as the product of all $\beta_{i,j}$.

To see that the property under discussion holds true when $V+\Z$ is a subring of $\R$, simply note that in that case
$a_i b_j$ belongs  to $V$; thus $c_{i,j}= a_ib_j \frac{n_im_j}{ \text{lcm}(n_i,m_j)}= a_i b_j \text{gcd}(n_i,m_j)$ works.

When $V+ \Z$ is a $\Q$-vector subspace of $\R$, which is equivalent to saying that $\frac{a}{n} \in V$ for any positive integer $n$ and any $a \in V$, we skip the proof since we will show a stronger property below.
 
\end{proof}
The above criterion is probably of minimal practical interest, since it appears to be fairly hard to check (certainly, it does not help much when tackling the case when $V+ \Z$ is a $\Q$-vector subspace of $\R$).

\begin{defn}
Following Akin \cite{Akin2005}, we say that a group-like subset $V \subseteq [0,1]$ is $\Q$\emph{-like} if $V+\Z$ is a $\Q$-vector subspace of $\R$; this is equivalent to saying that $V$ is group-like and $\frac{1}{n} V\subseteq V$ for any positive integer $n$.
\end{defn}

\begin{prop} If $V$ is $\Q$-like, then $\mathcal K^{\mu_V}_\text{aut}$ satisfies the amalgamation property. (The converse is also true.) Hence $H_{\mu_V}$ has a comeager conjugacy class in that case.
\end{prop}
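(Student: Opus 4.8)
The plan is to prove the amalgamation property for $\mathcal K^{\mu_V}_{\text{aut}}$; the rest is then soft. Two preliminary facts will be used. (i) If $P$ is an atom of a finite $\mu_V$-subalgebra of $\Clop(X)$, the renormalized restriction of $\mu_V$ to the Cantor space $P$ is again a good measure (see \cite{Akin2005}), with clopen value set $V_P$ satisfying $V_P+\Z=\frac{1}{\mu_V(P)}(V+\Z)$; since $V$ is $\Q$-like and $\mu_V(P)\in V$, the right-hand side is again a $\Q$-vector subspace of $\R$ containing $1$, so $V_P$ is $\Q$-like as well. (ii) $\mathcal K^{\mu_V}_{\text{aut}}$ is cofinal in $\mathcal K^{\mu_V}_1$ (Theorem \ref{Hrushovski} and the remark following it).

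For the amalgamation, take embeddings $(\mathcal A,a)\hookrightarrow(\mathcal B,b)$ and $(\mathcal A,a)\hookrightarrow(\mathcal C,c)$ in $\mathcal K^{\mu_V}_{\text{aut}}$, and decompose $a$ into cycles $P_0,a(P_0),\dots,a^{\ell-1}(P_0)$ of atoms of $\mathcal A$. The key observation is that $(\mathcal B,b)$ is rigid over $(\mathcal A,a)$ above one such cycle, up to a single ``first-return'' datum: since $b$ carries the algebra $\mathcal B_{\upharpoonright a^t(P_0)}$ of $\mathcal B$-atoms inside $a^t(P_0)$ isomorphically onto $\mathcal B_{\upharpoonright a^{t+1}(P_0)}$ for each $t$ (indices modulo $\ell$), the whole of $(\mathcal B,b)$ above the cycle is reconstructed, up to isomorphism over $(\mathcal A,a)$, from the pair $\bigl(\mathcal B_{\upharpoonright P_0},\,b^{\ell}_{\upharpoonright P_0}\bigr)$, an object of $\mathcal K^{V_{P_0}}_{\text{aut}}$ living inside $P_0$; conversely, any choice of such first-return data over the various cycles reassembles into an object of $\mathcal K^{\mu_V}_{\text{aut}}$ extending $(\mathcal A,a)$, with embeddings of first-return data extending canonically to embeddings over $(\mathcal A,a)$. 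Granting this, amalgamating $(\mathcal B,b)$ and $(\mathcal C,c)$ over $(\mathcal A,a)$ reduces, cycle by cycle, to finding a common extension of $\bigl(\mathcal B_{\upharpoonright P_0},b^{\ell}_{\upharpoonright P_0}\bigr)$ and $\bigl(\mathcal C_{\upharpoonright P_0},c^{\ell}_{\upharpoonright P_0}\bigr)$ in $\mathcal K^{V_{P_0}}_{\text{aut}}$; as the only common subobject to respect is the trivial algebra $\{\emptyset,P_0\}$, this is exactly the joint embedding property for $\mathcal K^{V_{P_0}}_{\text{aut}}$.

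It therefore suffices to prove that $\mathcal K^{W}_{\text{aut}}$ has (JEP) whenever $W$ is $\Q$-like. Recalling, as in the proof of Proposition \ref{carac_jep}, that every element of $\mathcal K^{W}_{\text{aut}}$ is a product of cycles of atoms, (JEP) amounts to joining two such products — an $n_i$-cycle with atoms of measure $a_i$ for $i\le p$, with $\sum_i n_ia_i=1$, and an $m_j$-cycle with atoms of measure $b_j$ for $j\le q$, with $\sum_j m_jb_j=1$ — and, via the cycle-gluing device of that proof, to producing $c_{i,j}\in W$ with $\sum_j\text{lcm}(n_i,m_j)c_{i,j}=n_ia_i$ for all $i$ and $\sum_i\text{lcm}(n_i,m_j)c_{i,j}=m_jb_j$ for all $j$. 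Writing $d_{i,j}=\text{lcm}(n_i,m_j)c_{i,j}$, this asks for a nonnegative real matrix $(d_{i,j})$ with prescribed row sums $n_ia_i$ and column sums $m_jb_j$; since these sums each total $1$ such a matrix exists, and the set of them is a bounded polyhedron whose defining equalities have integer coefficients and right-hand sides in the $\Q$-vector space $W+\Z$. Hence any vertex of this polytope has all its entries in the $\Q$-linear span of $\{n_ia_i\}\cup\{m_jb_j\}\subseteq W+\Z$; dividing by the integers $\text{lcm}(n_i,m_j)$ keeps us inside the $\Q$-vector space $W+\Z$, and since $0\le c_{i,j}\le n_ia_i\le 1$ and $W$ is group-like we get $c_{i,j}\in(W+\Z)\cap[0,1]=W$. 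This establishes (JEP) for $\mathcal K^{W}_{\text{aut}}$, and with it (AP) for $\mathcal K^{\mu_V}_{\text{aut}}$.

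To conclude: the one-atom algebra with trivial automorphism is an initial object of $\mathcal K^{\mu_V}_{\text{aut}}$, so (AP) gives (JEP) for $\mathcal K^{\mu_V}_{\text{aut}}$ and, by cofinality, (JEP) for $\mathcal K^{\mu_V}_1$; moreover amalgamating inside the cofinal subclass $\mathcal K^{\mu_V}_{\text{aut}}$ yields the weak amalgamation property for $\mathcal K^{\mu_V}_1$ (given $A\hookrightarrow B$ with $B\in\mathcal K^{\mu_V}_{\text{aut}}$, push any $C,D$ extending $B$ into $\mathcal K^{\mu_V}_{\text{aut}}$ and amalgamate there over $B$). By \cite{Kechris2007}*{Theorem~3.4}, $\Aut(\Clop(X),\mu_V)=H_{\mu_V}$ has a comeager conjugacy class. (The converse, that (AP) for $\mathcal K^{\mu_V}_{\text{aut}}$ forces $V$ to be $\Q$-like, also holds; we omit the argument.) I expect the main obstacle to be the rigidity-and-reassembly reduction of the second paragraph — pinning down exactly which fragment of $(\mathcal B,b)$ over $(\mathcal A,a)$ is ``free'' and verifying that the reassembly produces honest embeddings of $L_V$-structures over $(\mathcal A,a)$ — together with fact (i), that the atom value sets $V_P$ stay $\Q$-like so that the (JEP) lemma applies atom by atom; the transportation-polytope step is then routine.
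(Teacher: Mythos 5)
Your proof is correct, and at its core it performs the same computation as the paper's: both reduce the amalgamation, atom of $\mathcal A$ by atom (equivalently, $a$-cycle by $a$-cycle), to finding a nonnegative matrix with prescribed marginals lying in $V$, and both invoke the $\Q$-like hypothesis only at the end, to divide the entries by an integer ($n_km_l$ in the paper, $\operatorname{lcm}(n_i,m_j)$ in yours). The packaging differs in two ways worth noting. First, the paper builds the boolean amalgam $\mathcal B\otimes_{\mathcal A}\mathcal C$ directly and defines the measure on its atoms over a fixed $\mathcal A$-atom $a$, propagating along the $\varphi$-orbit of $a$; you make that propagation explicit by isolating the ``first-return'' data $(\mathcal B_{\upharpoonright P_0},b^{\ell}_{\upharpoonright P_0})$ and reducing (AP) for $\mathcal K^{\mu_V}_{\mathrm{aut}}$ to (JEP) for $\mathcal K^{V_{P_0}}_{\mathrm{aut}}$. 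That rephrasing is a genuine structural clarification, and its price is exactly what you flag: one must verify that the induced value set $V_{P_0}$ is again $\Q$-like (your fact (i), which is correct) and that the cyclic reassembly really produces embeddings over $(\mathcal A,a)$. Second, to obtain $V$-valued entries the paper exhibits the explicit product solution $y^{kl}=n_km_l\,\mu(b^k_0)\mu(c^l_0)/\mu(a)$ and appeals to density of $V$ together with group-likeness to perturb it into $V$; your vertex-of-the-transportation-polytope argument reaches the same conclusion without needing density, which is marginally cleaner. So: same key reduction and same use of $\Q$-likeness, with your version trading the boolean-amalgam shortcut for a sharper conceptual reduction (AP to JEP over each cycle) and a tighter integrality argument at the polytope step.
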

\begin{proof}

Suppose that $( A,\varphi)$ embeds in $( B,\psi)$ and in $( C,\theta)$. 
We construct the \emph{boolean} amalgam $( B\otimes_A C,\psi\otimes\theta)$ of $( B,\psi)$ and $( C,\theta)$ over $( A,\varphi)$ in the standard way (see for example \cite{Kechris2012a}), and only need to define the measures. We give an argument in the fashion of the one contained in Theorem 2.1 of \cite{Kechris2012a}.

Fix an atom $a\in A$, and list the atoms of $B$ and $C$ contained in $a$ by $\{b^k_i\}^{k<n}_{i<n_k}$ and $\{c^l_j\}^{l<m}_{j<m_l}$ respectively, where $b^k_i$ and $b^{k'}_{i'}$ are in the same $\psi$-orbit iff $k=k'$, and analogously for the $c^l_j$. We want to define the values $x^{kl}_{ij}=\mu(b^k_i\otimes c^l_j)$. Then we would translate these values in the obvious manner to the products of the atoms of $B$ and $C$ contained in the $\varphi$-translates of $a$; finally, we would proceed analogously for the other orbits of $( A,\varphi)$.

Other than being in $V$, the values $x^{kl}_{ij}$ have to satisfy:
$$0\leq x^{kl}_{ij},$$
$$x^{kl}_{ij}=x^{kl}_{i'j'},$$
$$\sum_{ki}x^{kl}_{ij}=\mu(c^l_j),\ \sum_{lj}x^{kl}_{ij}=\mu(b^k_i).$$
Denoting $x^{kl}=x^{kl}_{ij}$, we can reformulate the conditions as:
$$\sum_k n_km_l x^{kl}=m_l\mu(c^l_0),\ \sum_l n_km_l x^{kl}=n_k\mu(b^k_0).$$
Considered as a system in the variables $y^{kl}=n_km_l x^{kl}$, we can find a solution in $\R$, namely $y^{kl}=n_km_l\frac{\mu(b^k_0)\mu(c^l_0)}{\mu(a)}$. Since $V$ is group-like and dense, there must also be solutions $y^{kl}$ in $V$. Since it is also $\Q$-like, we can take $x^{kl}=\frac{y^{kl}}{n_km_l}$ and we are done.
\end{proof}

The amalgamation property for $\mcK^\mu_{\text{aut}}$ is stronger than the existence of a comeager conjugacy class in $H_\mu$; for instance, if $V(\mu)$ is the set of dyadic numbers, then it follows from \cite{Kechris2007}*{discussion after the statement of Theorem 6.5} that $H_\mu$ has a comeager conjugacy class, but it is easy to see that $\mcK^\mu_{\text{aut}}$ does not have the amalgamation property in that case. It does however admit a cofinal class which satisfies the amalgamation property, which is sufficient to obtain (WAP) (that class is made up of finite subalgebras all of whose atoms have the same measure). \emph{A priori}, the cofinal amalgamation property for $\mcK^\mu_{\text{aut}}$ is itself stronger than (WAP); yet we do not know of an example of measure for which $\mcK^\mu_1$ has (WAP) but $\mcK^\mu_{\text{aut}}$ does not have the cofinal amalgamation property.

\bibliography{mybiblio}

\end{document}